\newtheorem{thm}{Theorem}[section]
\newtheorem{lem}[thm]{Lemma}
\newtheorem{prop}[thm]{Proposition}
\newtheorem{rem}[thm]{Remark}
\numberwithin{equation}{section}
\newcommand{\h}{\dot{H}^1}
\newcommand{\hr}{\dot{H}_{rad}^1}
\newcommand{\rt}{\mathbb{R}^3}
\newcommand{\les}{\lesssim}
\newcommand{\lam}{\lambda}
\newcommand{\gam}{{\gamma}}
\newcommand{\vp}{\varphi}
\newcommand{\ve}{\varepsilon}
\newcommand{\eg}{E_g}
\newcommand{\gs}{g_s}
\newcommand{\egc}{E_{g,c}}
\newcommand{\s}{S_w(I)}
\newcommand{\sst}{S_w(I^*)}
\newcommand{\yi}{Y_{i,w}(I)}
\newcommand{\ljn}{\lam_{j,n}}
\newcommand{\kjn}{\kappa_{j,n}}
\newcommand{\kon}{\kappa_{1,n}}
\newcommand{\tjn}{t_{j,n}}
\begin{document}

	\title[Energy-critical inhomogeneous NLS]{On the focusing energy-critical inhomogeneous NLS: weighted space approach}

	\author[Y. Cho]{Yonggeun Cho}
	\address{Department of Mathematics, and Institute of Pure and Applied Mathematics, Jeonbuk National University, Jeonju 54896, Republic of Korea}
	\email{changocho@jbnu.ac.kr}

	\author[K. Lee]{Kiyeon Lee}
	\address{Department of Mathematics, Jeonbuk National University, Jeonju 54896, Republic of Korea}
	\email{leeky@jbnu.ac.kr}

	\thanks{2010 {\it Mathematics Subject Classification.} 35Q55, 35Q40.}
	\thanks{{\it Keywords and phrases.} inhomogeneous NLS, weighted space, focusing energy-critical nonlinearity, GWP, scattering, blowup, Kenig-Merle argument.}
	
	\begin{abstract}
In this paper we consider the global well-posedness (GWP) and finite time blowup problem for the 3D focusing energy-critical inhomogeneous NLS with spatial inhomogeneity coefficient $g$ such that $g(x) \sim |x|^{-b}$ for $0 \le b < 2$. The difficulty of this problem comes from the singularity of $g$. In the previous result \cite{chkl3} the authors showed the GWP for $0 \le b < \frac43$ by Kenig-Merle argument based on the standard Strichartz estimates. Here we extend the GWP to the coefficient with more serious singularity, $\frac43 \le b < \frac32$. For this purpose, we improve the local theory and develop a new profile decomposition based on weighted Strichartz estimates.
	\end{abstract}

		\maketitle

\section{Introduction}
We consider the following Cauchy problem for an inhomogeneous nonlinear Schr\"odinger equation:
\begin{eqnarray}\label{maineq}
	\left\{\begin{array}{l}
	iu_t + \Delta u + g|u|^{p-1}u = 0 \;\; \mathrm{in}\;\; \mathbb{R}^{1+3},\\
	u(0) = \varphi \in \h(\rt),
	\end{array} \right.
\end{eqnarray}
where $g \in C^1(\rt\backslash \{0\})$ is the coefficient representing interaction among particles. The equation \eqref{maineq} can be a  model of  dilute Bose-Einstein condensate when the two-body interactions of the condensate are considered. For this see \cite{bpvt, ts}. Also, it has been considered to study the laser guiding in an axially nonuniform plasma channel. See \cite{gill, ss, ts}.

To maintain the $\h$-scaling invariance structure we assume that $p = 5 - 2b$ for $0 \le b < 2$ and
\begin{align}\label{scaling}
0 \le g_i \le |x|^bg(x) \le g_s \;\;  \mbox{and}\;\; |x||\nabla g(x)| \lesssim |x|^{-b} \;\;\mbox{for any} \;\;x \neq 0,
\end{align}
 where $g_i = \inf |x|^bg(x)$ and $ g_s = \sup |x|^bg(x)$.
Here $\h$ denotes the homogenous Sobolev space defined by
$$
\h = \{f \in L_x^6 :  \|f\|_{\h}:=\|\nabla f\|_{L_x^2}<+\infty\}.
$$

	The energy $E_g$ of the solution to \eqref{maineq} is defined by
\begin{eqnarray}\label{energy}
E_g(u(t))&:=& \frac12 \|\nabla u(t)\|_{L_x^2}^{2} - \frac1{p+1}\int g|u(t)|^{p+1} dx.
\end{eqnarray}

\newcommand{\rp}{{ r_0}}
\newcommand{\qp}{{ q_0}}
\newcommand{\gm}{{\gam^*}}
%Like in the scaling invariance case $g = |x|^{-b}$, a scaling invariance structure can be set up in $\h$ under a condition on $g$. To be more precise, we assume that
% Then the $\h$-scaled function $u_\lambda$ for $\lam > 0$ defined by $u_\lam(t,x) = \lam^\frac12u(\lam^2t,\lam x)$ is also the solution to the equation \eqref{maineq} with the coefficient $g_\lam(x) := \lambda^b g(\lambda x)$ satisfying \eqref{scaling}. Thus we may say that \eqref{maineq} is {\it essentially} energy-critical.

We say that \eqref{maineq} is locally well-posed if there exists a maximal existence time interval $I^*$ such that there exists a unique solution $u \in C(I^*; \h)$ and $u$ depends continuously on the initial data. The local well-posedness (LWP) can be usually shown by a contraction argument based on the Strichartz estimates \cite{cawe, chkl3, leeseo}. In this paper the $L_t^\qp L_x^\rp(|x|^{-\rp\,\gm})$-norm controls our whole contraction argument. Here
\begin{align}\label{adm-pair}
\gm = \frac12-4\ve,\;\; \rp = \frac6{1-6\ve},\;\;\mbox{ and }\;\;\qp = \frac4{1-2\ve}
\end{align}
for arbitrarily small $\ve > 0$. The space $L_t^\qp L_x^\rp(|x|^{-\rp\,\gm})$ is $\dot H^1$-scaling invariance, that is, $\|u_\lambda\|_{L_t^\qp L_x^\rp(|x|^{-\rp\,\gm})} = \|u\|_{L_t^\qp L_x^\rp(|x|^{-\rp\,\gm})}$ for $u_\lambda(t, x) = \lambda^\frac12u(\lambda^2t, \lambda x)$ and for any $\lambda > 0$. The problem \eqref{maineq} is said to be globally well-posed if $I^* = \mathbb R$ and the global solution $u$ is said to scatter in $\h$ if there exists linear solutions $u_\pm$ such that $u \to u_\pm$ in $\h$ as $t \to \pm\infty$. The solution is said to blow up if
$$
\left(\int_{I^*}\left(\int_{\mathbb R^3}\big(|x|^{-\gm}|u(t, x)|\big)^{\rp}\,dx\right)^{\frac{\qp}{\rp}}dt\right)^{\frac1\qp} = +\infty,
$$
We also use the terminology of finite time blowup when $I^*$ is bounded.

%%%%%%%%%%%%%%%%%%%%%%%%%%%%%%%%%%%%%%%%%%%%%%%%%%%%%%%%%%%%%%%%%%%%%%%%%%%%%%%%%%%%%%%%%%%%%%%%%%%%%%%%%%%%%%%%%%%%%%%%%%%%%%%%%%%%%%%%%%%%%%%%%%%%%%%%%%%%%%%%
The aim of this paper is to establish a global theory for radial solutions: the global well-posedness (GWP), the scattering, and  the finite time blowup to \eqref{maineq}. In the previous paper \cite{chkl3} the authors considered a global theory for $g$ with $0 \le  b < \frac43$ which was shown by a concrete concentration-compactness argument based on the local theory and profile decomposition. The restriction of index $b$ is due to the lack of local theory of \eqref{maineq}. In this paper, we overcome it and extend the range of $b$ up to $\frac32$. In order to handle the $g$ with $\frac43 \le b < \frac32$ we develop an improved local theory, which consists of LWP and long-time perturbation, and develop a new profile decomposition based on the weighted space $L_t^\qp L_x^\rp(|x|^{-\rp\,\gm})$. The global theory can be shown straightforwardly by the concentration-compactness argument of \cite{ken, chkl3, dinh, fagu, mimurzh}. One can find another results about inhomogeneous NLS in \cite{coge, dinh2, guz, merle}.

To state our main results we first introduce a variational condition which restricts the lower and upper bounds of $g$ as follows:
\begin{align}\label{var-con}
g_0 := \gs(3 - b - g_i)  \le 2 - b.
\end{align}
where $g_i, g_s$ are in \eqref{scaling} and a {\it rigidity condition} for $g$ such that
\begin{align}\label{rig-con}
-bg(x) \le x\cdot \nabla g(x) \;\;\mbox{for all}\;\;x \neq 0.
\end{align}
These conditions are crucial for variational estimates and localized virial estimates which play a key major role in the concentration-compactness argument.
The condition \eqref{var-con} seems to be more or less technical. In fact it is necessary while comparing the initial data with the ground state $Q_b$, which is the solution $Q_b(x)= \left(1+ \frac{|x|^{2-b}}{3-b}\right)^{-\frac1{2-b}}$ to the elliptic problem
    \begin{align}\label{ell}
\Delta Q_b + |x|^{-b}Q_b^{5-2b} = 0.
\end{align}
For this see Remark 2.1 of \cite{ya} and Appendix of \cite{chkl3}. The condition \eqref{rig-con} enables us to control the error term occurring when we deal with the lower bound for the second derivative of localized virial quantity.

%%%%%%%%%%%%%%%%%%%%%%%%%%%%%%%%%%%%%%%%%%%%%%%%%%%%%%%%%%%%%%%%%%%%%%%%%%%%%%%%%%%%%%%%%%%%%%%%%%%%%%%%%%%%%%%%%%%%%%%%%%%%%%%%%%%%%%%%%%%%%%%%%%%%%%%%%%%%%%%%

Now we are ready to state our the main result.
%======================================================================================================================================================================
\begin{thm}\label{mainresult}
	Let $\frac43 \le  b < \frac32$. Let $g$ be a radial function satisfying \eqref{scaling}, \eqref{var-con}, and \eqref{rig-con}.		
	Suppose that $\varphi \in \hr := \{f \in \h : f\;\;\mbox{is radial}\;\}$,
	\begin{align}\label{e-cond}
	E_g(\varphi) < E_g(Q_b),\;\;\mbox{and}\;\; \gs\|\varphi\|_{\h}^2 < \|Q_b\|_{\h}^2.
	\end{align}
	Then \eqref{maineq} is globally well-posed in $\hr$ and the solution $u$ scatters in $\hr$.
\end{thm}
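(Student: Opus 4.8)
The plan is to run the Kenig--Merle concentration--compactness/rigidity scheme in the weighted space $L_t^\qp L_x^\rp(|x|^{-\rp\,\gm})$. \textbf{Step 1 (small-data theory and variational rigidity).} First I would record the two consequences of the improved local theory built on the weighted Strichartz estimates: (i) if $\|\varphi\|_{\hr}$ is small then \eqref{maineq} is globally well-posed and scatters in $\hr$ with small $L_t^\qp L_x^\rp(|x|^{-\rp\,\gm})$-norm; (ii) a long-time perturbation lemma — an approximate solution with finite $L_t^\qp L_x^\rp(|x|^{-\rp\,\gm})$-norm and small error in the corresponding dual norm is globally well approximated, in $\h$ and in that norm, by the genuine solution with nearby data. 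Next I would invoke the variational lemma for $g$ obeying \eqref{scaling} and \eqref{var-con}: for $\varphi$ as in \eqref{e-cond} the open set $\{g_s\|u(t)\|_{\h}^2<\|Q_b\|_{\h}^2\}$ is preserved by the flow, $E_g(u(t))\sim\|u(t)\|_{\h}^2$ uniformly in $t$, and the functional $K_g(u):=\|\nabla u\|_{L^2}^2-\int g|u|^{p+1}\,dx$ satisfies $K_g(u(t))\ge\delta\,\|u(t)\|_{\h}^2>0$; this is where the comparison with $Q_b$ and the technical restriction \eqref{var-con} enter (cf. Remark 2.1 of \cite{ya} and the appendix of \cite{chkl3}). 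In particular Theorem \ref{mainresult} holds when $\|\varphi\|_{\hr}$ is small.

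\textbf{Step 2 (the critical element).} Suppose the theorem fails. The class of coefficients satisfying \eqref{scaling}, \eqref{var-con}, \eqref{rig-con} is closed under the rescaling $g\mapsto g_\lam$, $g_\lam(x)=\lam^{-b}g(x/\lam)$ — indeed $|x|^bg_\lam(x)=|x/\lam|^bg(x/\lam)\in[g_i,g_s]$ with the same $g_i,g_s$, hence the same \eqref{var-con}, while \eqref{rig-con} rescales to itself — so I would let $E_c<E_g(Q_b)$ be the threshold energy below which scattering holds throughout this class; Step 1 gives $E_c>0$. Taking sub-threshold data $\varphi_n$ with $E_g(\varphi_n)\downarrow E_c$ whose solutions do not scatter for at least one time direction, I would apply the \emph{new weighted linear profile decomposition}: along the free flow $\varphi_n=\sum_{j=1}^{J}\ljn^{1/2}e^{i\tjn\ljn^2\Delta}\phi^j(\ljn\cdot)+r_n^J$ with pairwise orthogonal scales/times, $\limsup_n\|e^{it\Delta}r_n^J\|_{L_t^\qp L_x^\rp(|x|^{-\rp\,\gm})}\to0$ as $J\to\infty$, and asymptotic decoupling of $\|\cdot\|_{\h}^2$ and of $E_g$. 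For each $j$ I would build the nonlinear profile $v^j$ as the solution of \eqref{maineq} with the (subsequential) limit of the admissible rescaled coefficients $g_{\ljn}$ (of the form $h(x)|x|^{-b}$) and data matching $e^{i\tjn\ljn^2\Delta}\phi^j$, using (i) when $\tjn\ljn^2\to\pm\infty$; the sub-threshold status is inherited by the profiles precisely because of \eqref{var-con}. If two or more profiles were nontrivial, each would have energy $<E_c$ and hence scatter, so $\sum_j v_n^j+e^{it\Delta}r_n^J$ is an approximate solution with finite $L_t^\qp L_x^\rp(|x|^{-\rp\,\gm})$-norm and small error, and (ii) would force each $\varphi_n$ to scatter for large $n$, a contradiction. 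Hence exactly one profile is nontrivial, $r_n^J\to0$, and after rescaling one obtains $u_c\in C(\mathbb R;\hr)$ solving \eqref{maineq} with $E_g(u_c)=E_c$, $g_s\|u_c(t)\|_{\h}^2<\|Q_b\|_{\h}^2$, which does not scatter and whose trajectory is precompact in $\hr$ modulo the $\h$-scaling; in particular $\inf_t\|u_c(t)\|_{\h}>0$, and the scaling parameter stays in a fixed compact subset of $(0,\infty)$ (ruling out the self-similar and frequency-cascade scenarios as in \cite{ken,chkl3}).

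\textbf{Step 3 (rigidity).} Finally I would show $u_c\equiv0$ by a truncated virial argument. With $\phi_R$ a smooth radial cutoff, $\phi_R\equiv1$ on $|x|\le R$, set $V_R(t)=\int\phi_R(x)|x|^2|u_c(t,x)|^2\,dx$. A direct computation from \eqref{maineq} gives
\[
V_R''(t)=8\|\nabla u_c\|_{L^2}^2-\frac{12(p-1)}{p+1}\int g|u_c|^{p+1}\,dx+\frac{8}{p+1}\int(x\cdot\nabla g)|u_c|^{p+1}\,dx+\mathrm{Err}_R(t),
\]
with $\mathrm{Err}_R(t)$ supported in $|x|\gtrsim R$. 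Since $p=5-2b$ makes $12(p-1)+8b=8(p+1)$, the rigidity condition \eqref{rig-con}, i.e. $x\cdot\nabla g\ge-bg$, yields $V_R''(t)\ge 8K_g(u_c(t))+\mathrm{Err}_R(t)$. The precompactness of the scale-bounded trajectory gives uniform-in-$t$ tightness, so $\sup_t|\mathrm{Err}_R(t)|\to0$ as $R\to\infty$; together with $K_g(u_c(t))\ge\delta\|u_c(t)\|_{\h}^2\ge\delta\inf_t\|u_c(t)\|_{\h}^2=:2c_0>0$ this gives $V_R''(t)\ge c_0>0$ for all $t$ once $R$ is large. On the other hand $|V_R'(t)|\lesssim R\|u_c(t)\|_{\h}\|u_c(t)\|_{L^2(|x|\le2R)}\lesssim R^2$ uniformly in $t$ by coercivity and Sobolev. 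Integrating $V_R''\ge c_0$ over $[0,T]$ gives $V_R'(T)\ge c_0T-C_R\to\infty$, contradicting $|V_R'(T)|\lesssim R^2$. Hence $u_c\equiv0$, contradicting $E_c>0$; therefore $E_c=E_g(Q_b)$ and Theorem \ref{mainresult} follows.

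\textbf{Main obstacle.} The crux is Step 2: (a) the \emph{weighted} linear profile decomposition, which needs a frequency-localized refinement of the weighted Strichartz inequality (in the spirit of Bahouri--G\'erard/Keraani but with the $|x|^{-\gm}$ weight, where the radial symmetry of $\hr$ is used essentially), and (b) the nonlinear profiles for the rescaled equations whose coefficients $g_{\ljn}$ do not commute with the scaling — one must pass to a limiting admissible coefficient $h(x)|x|^{-b}$ and verify that the sub-threshold status persists, which is exactly the role of \eqref{var-con}. Feeding into both is the improved local well-posedness and long-time perturbation theory for $\frac43\le b<\frac32$, the other genuinely new ingredient (the unweighted Strichartz estimates no longer close the contraction since $g|u|^{p-1}u\sim|x|^{-b}|u|^{4-2b}u$ is too singular at the origin). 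The variational estimates of Step 1 and the virial rigidity of Step 3 are, by contrast, routine adaptations of \cite{ken,chkl3,dinh,fagu,mimurzh}.
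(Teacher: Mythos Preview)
Your proposal is correct and follows the same Kenig--Merle road map the paper adopts (and largely defers to \cite{chkl3} for): small-data/variational input from Lemma~\ref{trapping}, extraction of a minimal-energy blowup solution via the weighted profile decomposition (Theorem~\ref{profile-thm}) together with long-time perturbation (Proposition~\ref{per}), and elimination by the localized virial identity under \eqref{rig-con}, exactly as in Propositions~\ref{mebs}--\ref{rigidity}. Your device of working uniformly over the scaling-closed class of admissible coefficients $g$ to absorb the lack of exact scale invariance when building the nonlinear profiles is one clean way to handle Step~2 and is consistent with the paper's sketch; the identity $12(p-1)+8b=8(p+1)$ you use in Step~3 is precisely how \eqref{rig-con} turns the main virial term into $8K_g(u)$.
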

%======================================================================================================================================================================
The upper bound $\frac32$ of $b$ is required to control, by the weighted norm $L_t^\qp L_x^\rp(|x|^{-\rp\gm})$, the nonlinear terms appearing while dealing with LWP. This weighted space argument is inevitable in our local theory for the present.(See the Remark \ref{maincondi}.) The gap $\frac32 \le b < 2$ will be hopefully filled in the near future. Since the local theory is based on weighted spaces, we need to develop a new profile decomposition associated with the weighted space. Once the local theory and profile decomposition are established, one can readily prove Theorem \ref{mainresult} by following the concentration-compactness argument of \cite{km, ken, chkl2, chkl3}. Hence we focus mainly on the local theory and profile decomposition and sketch the concentration-compactness argument in this paper to avoid the duplication.

%condition \eqref{e-cond} together with \eqref{var-con} shows the energy trapping and coercivity of energy, that is, $E_g(u) \sim \|u\|_{\h}^2 \sim \|\varphi\|_{\h}^2$. It plays a crucial role in the rigidity part. In order to show the existence, nonexistence and compactness of minimal energy blowup solution, we develop a new profile decomposition for radial data based on the weighted space $L_t^\qp L_x^\rp(|x|^{-\rp\gm})$, and compactness of MEBS flow under \eqref{e-cond} and \eqref{rig-con}.

Let us now consider a blowup result. Our blowup result follows from the standard virial argument for which we need to control the upper bound of the second derivative of localized virial quantity. This can be done by assuming that
\begin{align}\label{virial-con}
x\cdot \nabla g(x) \le (6 - b)(k_g-\rho)g(x) \;\;\mbox{for all}\;\;x \neq 0,
\end{align}
where $k_g = \frac{2 - b - g_0}{3 - b - g_0}$ and for some $\rho \ge 0$.
Then we get the sharp blowup result as follows.
\begin{thm}\label{blowup-thm}
	Let $\frac43 \le b < \frac32$. Let $g$ be a nonnegative and bounded function satisfying \eqref{scaling}, \eqref{var-con}, and \eqref{virial-con}.		
	\begin{enumerate}
		\item[$(1)$] Suppose that $\varphi \in \h$, $|x|\varphi \in L^2$,
		\begin{align}\label{b-cond}
		E_g(\varphi) < E_g(Q_b),\;\;\mbox{and}\;\; \gs\|\varphi\|_{\h}^2 \ge \|Q_b\|_{\h}^2.
		\end{align}
		Then the solution $u$ to \eqref{maineq} blows up in finite time.
		\item[$(2)$] Suppose that $g$ is radial, $\rho > 0$, and $\varphi \in \hr$ satisfies \eqref{b-cond}.
		Then the radial solution $u$ to \eqref{maineq} blows up in finite time.
	\end{enumerate}
\end{thm}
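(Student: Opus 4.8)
The plan is to run the classical convexity (Glassey--Ogawa--Tsutsumi) argument on a localized variance, using the variational estimates that \eqref{var-con} and \eqref{virial-con} are designed to produce; unlike the scattering proof, the rigidity condition \eqref{rig-con} plays no role here, its job being taken over by \eqref{virial-con}. The essential input, proved exactly as in \cite{chkl3} (see also \cite{ya}), is a variational coercivity in the super-ground-state regime: under \eqref{var-con}, if $\eg(\varphi)<\eg(Q_b)$ and $\gs\|\varphi\|_{\h}^2\ge\|Q_b\|_{\h}^2$, then $\gs\|u(t)\|_{\h}^2>\|Q_b\|_{\h}^2$ is propagated on $I^*$, and there is $\delta_0=\delta_0(\eg(Q_b)-\eg(\varphi))>0$ with
\begin{equation}\label{e:coerc}
8\|\nabla u(t)\|_{L_x^2}^2-A_b\int g|u(t)|^{p+1}\,dx\le-\delta_0\qquad(t\in I^*),\qquad A_b:=\frac{12(2-b)-4(6-b)(k_g-\rho)}{3-b}.
\end{equation}
This rests on energy conservation written as $\int g|u|^{p+1}\,dx=(3-b)\|\nabla u\|_{L_x^2}^2-(6-2b)\eg(\varphi)$, the sharp weighted Gagliardo--Nirenberg inequality optimized by $Q_b$ (so that $\eg(Q_b)=\tfrac{2-b}{2(3-b)}\|Q_b\|_{\h}^2$), and the Pohozaev identities for \eqref{ell}; \eqref{var-con} is exactly what makes $A_b$ large enough for \eqref{e:coerc}.

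\emph{Part \textup{(1)}.} Since $\varphi\in\h\subset L_x^6$ and $|x|\varphi\in L^2$, splitting the integral at $|x|=1$ shows $\varphi\in L^2$, so the data lie in $\Sigma=\{f\in H^1:|x|f\in L^2\}$. By persistence of finite variance (a standard consequence of $\tfrac{d}{dt}\int|x|^2|u|^2\,dx=4\,\mathrm{Im}\int\bar u\,x\cdot\nabla u\,dx$ together with mass conservation), $V(t):=\int|x|^2|u(t)|^2\,dx$ is finite and $C^2$ on $I^*$, and a routine regularization argument yields the virial identity
\begin{equation}\label{e:virial}
V''(t)=8\|\nabla u(t)\|_{L_x^2}^2-\frac{12(2-b)}{3-b}\int g|u(t)|^{p+1}\,dx+\frac{4}{3-b}\int(x\cdot\nabla g)|u(t)|^{p+1}\,dx.
\end{equation}
Because $|u|^{p+1}\ge0$ and $g\ge0$, \eqref{virial-con} gives the pointwise bound $\int(x\cdot\nabla g)|u|^{p+1}\le(6-b)(k_g-\rho)\int g|u|^{p+1}$, so \eqref{e:virial} combined with \eqref{e:coerc} gives $V''(t)\le-\delta_0<0$ on $I^*$. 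A nonnegative $C^2$ function cannot satisfy $V''\le-\delta_0$ on a half-line; hence $I^*$ is bounded and $u$ blows up in finite time.

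\emph{Part \textup{(2)}.} When $|x|\varphi\notin L^2$ we replace $|x|^2$ by a truncated radial weight $\phi_R(x)=R^2\phi(x/R)$, with $\phi$ smooth radial, $\phi(x)=|x|^2$ for $|x|\le1$, $\phi$ constant for $|x|\ge3$, and Hessian $\nabla^2\phi\le2\,\mathrm{Id}$ everywhere (so $|\nabla^2\phi_R|\lesssim1$, $|\nabla\phi_R|\lesssim|x|$, $|\Delta\phi_R|\lesssim1$, $|\Delta^2\phi_R|\lesssim R^{-2}$, all supported in $|x|\le3R$). Then $V_R(t):=\int\phi_R|u(t)|^2\,dx$ is finite (using $u(t)\in L_x^6$ and the compact support of $\phi_R$) and $C^2$ on $I^*$, and its second derivative is a Morawetz quantity whose integrand agrees with that of \eqref{e:virial} on $|x|\le R$ and contributes error terms on the annulus $R<|x|<3R$. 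Keeping the kinetic term $\le8\|\nabla u\|_{L_x^2}^2$ via $\nabla^2\phi_R\le2\,\mathrm{Id}$, estimating the $\Delta^2\phi_R$-error through the radial Sobolev inequality $\|u\|_{L_x^\infty(|x|>R)}\lesssim R^{-1/2}\|\nabla u\|_{L_x^2(|x|>R)}$ (which turns $R^{-2}\int_{R<|x|<3R}|u|^2$ into $\lesssim\|\nabla u\|_{L_x^2(|x|>R)}^2$), and bounding the nonlinear errors by $\int_{|x|>R}|x|^{-b}|u|^{6-2b}\,dx$ with the aid of $g\le\gs|x|^{-b}$ and $|x||\nabla g|\lesssim|x|^{-b}$ from \eqref{scaling}, one arrives — after applying \eqref{virial-con} on $\{|x|\le R\}$ and absorbing the $\{|x|>R\}$ pieces into the error — at
\begin{equation}\label{e:trunc}
V_R''(t)\le 8\|\nabla u(t)\|_{L_x^2}^2-A_b\int g|u(t)|^{p+1}\,dx+C\Big(\|\nabla u(t)\|_{L_x^2(|x|>R)}^2+\int_{|x|>R}|x|^{-b}|u(t)|^{6-2b}\,dx\Big).
\end{equation}
On a compact subinterval $[0,T]\subset I^*$ the orbit $\{u(t):t\in[0,T]\}$ is compact in $\h$, so the two tail terms in \eqref{e:trunc} tend to $0$ uniformly in $t$ as $R\to\infty$; since \eqref{e:coerc} is strict (and with $\rho>0$ leaves extra room in $A_b$), choosing $R$ large gives $V_R''(t)\le-\delta_0/2$ on $[0,T]$, and the convexity argument of Part (1) again forces $\sup I^*<\infty$. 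The one bookkeeping point is to select $R$ and $T$ compatibly so that $V_R$ is driven to zero within the window on which the errors stay controlled; this is the usual bootstrap, using $V_R(0)\lesssim R^4$ and $|V_R'(0)|\lesssim R^2$.

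The principal obstacle is the variational step \eqref{e:coerc}: one must verify that, under \eqref{var-con}, the constant $A_b$ produced by \eqref{virial-con} (with $\rho$ large enough in Part (2)) is large enough to bound the virial functional above by a gap $-\delta_0$ depending only on $\eg(Q_b)-\eg(\varphi)$. This is where the two hypotheses $\eg(\varphi)<\eg(Q_b)$ and $\gs\|\varphi\|_{\h}^2\ge\|Q_b\|_{\h}^2$ are used jointly, together with the sharpness of the weighted Gagliardo--Nirenberg inequality and the explicit energy level $\eg(Q_b)=\tfrac{2-b}{2(3-b)}\|Q_b\|_{\h}^2$. In Part (2) the secondary difficulty is rendering the truncation errors in \eqref{e:trunc} harmless over the relevant time scale; the radial Sobolev embedding combined with the strict sign $\rho>0$ (and, if needed, an a priori bound on the growth of $\|\nabla u(t)\|_{L_x^2}$) is what makes this go through.
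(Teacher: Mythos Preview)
Your Part (1) is essentially the paper's argument: the same full-variance virial identity, the same use of \eqref{virial-con} to dominate the $x\cdot\nabla g$ term, and the same appeal to the super-ground-state coercivity (the paper's Lemma~\ref{blowup-cor}).

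Part (2), however, has a genuine gap. Your error terms in \eqref{e:trunc}, namely $\|\nabla u(t)\|_{L^2(|x|>R)}^2$ and $\int_{|x|>R}|x|^{-b}|u(t)|^{6-2b}\,dx$, are made small only by compactness of the orbit on a \emph{fixed} interval $[0,T]$; this forces $R$ to depend on $T$. But to drive $V_R$ negative you need $T\gtrsim R^2$ (from $V_R(0)\lesssim R^4$, $|V_R'(0)|\lesssim R^2$), while in the blow-up regime Lemma~\ref{blowup-cor} gives only a \emph{lower} bound on $\|u(t)\|_{\h}$, no upper bound; so $R(T)$ can grow with no relation to $T^{1/2}$ and the ``usual bootstrap'' does not close. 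The parenthetical ``if needed, an a priori bound on the growth of $\|\nabla u(t)\|_{L_x^2}$'' is exactly the missing ingredient, and it is not available here.

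The paper proceeds differently: it takes $\varphi\in L^2$ (explicitly noted in the discussion following the theorem) and uses mass conservation. The $\Delta^2\psi_r$ error is then simply $\lesssim r^{-2}\|\varphi\|_{L^2}^2$, a fixed constant. The nonlinear tail is estimated, via H\"older and the weighted Sobolev inequality $\||x|^{-(3b-4)/2}u\|_{L^{2/(b-1)}}\lesssim\|\nabla u\|_{L^2}$ (Lemma~\ref{emb-w}), as $\lesssim r^{-(2-b)}\|\varphi\|_{L^2}^{4-2b}\,\|\nabla u(t)\|_{L^2}^2$, and this is \emph{absorbed} into the kinetic term --- $\rho>0$ is what buys the room in Lemma~\ref{blowup-cor} after absorption. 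One obtains $z_r''\le -c$ uniformly on $I^*$ for a single large $r$, and the convexity argument closes. Your radial Sobolev bound $\|u\|_{L^\infty(|x|>R)}\lesssim R^{-1/2}\|\nabla u\|_{L^2(|x|>R)}$ is correct, but it trades the conserved mass for a gradient tail that you cannot control uniformly in time.
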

Note that the radial symmetry is not necessary for the proof of $(1)$. In $(2)$ the moment condition $|x|\varphi \in L_x^2$ has been replaced with the radial symmetry and $\vp \in L_x^2$. This is due to the space-decay estimate of Strauss \cite{stra}. The condition $\rho > 0$ in $(2)$ is required to handle error terms appearing in localized virial argument that is not necessary for $(1)$.

%%%%%%%%%%%%%%%%%%%%%%%%%%%%%%%%%%%%%%%%%%%%%%%%%%%%%%%%%%%%%%%%%%%%%%%%%%%%%%%%%%%%%%%%%%%%%%%%%%%%%%%%%%%%%%%%%%%%%%%%%%%%%%%%%%%%%%%%%%%%%%%%%%%%%%%%%%%%%%%%
\noindent\textbf{Notations.}\\
%%%%%%%%%%%%%%%%%%%%%%%%%%%%%%%%%%%%%%%%%%%%%%%%%%%%%%%%%%%%%%%%%%%%%%%%%%%%%%%%%%%%%%%%%%%%%%%%%%%%%%%%%%%%%%%%%%%%%%%%
\noindent$\bullet$ Mixed-normed spaces: For a Banach space $X$ and an interval $I$, $u \in L_I^q X$ iff $u(t) \in X$ for a.e. $t \in I$ and $\|u\|_{L_I^qX} := \|\|u(t)\|_X\|_{L_I^q} < \infty$. Especially, we denote  $L_I^qL_x^r = L_t^q(I; L_x^r(\rt))$, $L_{I, x}^q = L_I^qL_x^q$, $L_t^qL_x^r = L_{\mathbb R}^qL_x^r$.

%%%%%%%%%%%%%%%%%%%%%%%%%%%%%%%%%%%%%%%%%%%%%%%%%%%%%%%%%%%%%%%%%%%%%%%%%%%%%%%%%%%%%%%%%%%%%%%%%%%%%%%%%%%%%%%%%%%%%%%%
\noindent$\bullet$ Weighted spaces: For $\gam \ge 0$ and $I$ an interval, $u \in L_I^qL_x^r(|x|^{-r\gam}) $ iff $\|u\|_{L_I^qL_x^r(|x|^{-r\gam})} := \||x|^{-\gam}u\|_{L_I^qL_x^r} < \infty$.

%%%%%%%%%%%%%%%%%%%%%%%%%%%%%%%%%%%%%%%%%%%%%%%%%%%%%%%%%%%%%%%%%%%%%%%%%%%%%%%%%%%%%%%%%%%%%%%%%%%%%%%%%%%%%%%%%%%%%%%%%
\noindent$\bullet$ As usual different positive constants depending only on $b, g_i, g_s$ are denoted by the same letter $C$, if not specified. $A \lesssim B$ and $A \gtrsim B$ means that $A \le CB$ and
$A \ge C^{-1}B$, respectively for some $C>0$. $A \sim B$ means that $A \lesssim B$ and $A \gtrsim B$.\\
%%%%%%%%%%%%%%%%%%%%%%%%%%%%%%%%%%%%%%%%%%%%%%%%%%%%%%%%%%%%%%%%%%%%%%%%%%%%%%%%%%%%%%%%%%%%%%%%%%%%%%%%%%%%%%%%%%%%%%%%%

%%%%%%%%%%%%%%%%%%%%%%%%%%%%%%%%%%%%%%%%%%%%%%%%%%%%%%%%%%%%%%%%%%%%%%%%%%%%%%%%%%%%%%%%%%%%%%%%%%%%%%%%%%%%%%%%%%%%%%%%%%%%%%%%%%%%%%%%%%%%%%%%%%%%%%%%%%%%%%%%%%%%%%%%%%%%%%%%%%%%%%%%%%%%%%%%%%%%%%%%%%%%%%%%%%%%%%%%%%

\section{Local theory}
In this section, we deal with a local theory on \eqref{maineq}, which consists of local well-posedness (LWP) and long-time perturbation.

\subsection{Preliminaries}
We first introduce some preliminaries which are useful both in local and global theories.
By Duhamel's principle the equation \eqref{maineq} is rewritten as the integral equation:
\begin{eqnarray}\label{inteq}
u = e^{it\Delta}\varphi + i \int_{0}^{t} e^{i(t-t')\Delta}g|u(t')|^{p-1}u(t')dt'.
\end{eqnarray}
Here we define the linear propagator $e^{it\Delta}$ the solution to the linear problem $i\partial_tv=-\Delta v$ with initial data $v(0)=f$. It is formally given by
$$e^{it\Delta}f = \mathcal{F}^{-1}\left(e^{-it|\xi|^2}\mathcal{F}(f)\right)= (2\pi)^{-3}\int_{\rt} e^{i(x\cdot\xi - t|\xi|^2)}\widehat{f}(\xi)d\xi,$$
where $\widehat{f} = \mathcal F( f)$ denotes the Fourier transform of $f$ and $\mathcal F^{-1}(h)$ the inverse Fourier transform of $h$ such that
$$\mathcal{F}(f)(\xi) = \int_{\rt} e^{- ix\cdot \xi} f(x)\,dx,\quad \mathcal F^{-1} (h)(x) = (2\pi)^{-3}\int_{\rt} e^{ix\cdot \xi} h(\xi)\,d\xi.$$

Recently, a weighted version of Strichartz estimate was considered in \cite{leeseo}. It can be described as follows. Let $2 \le q,r \le \infty$ and $0 \le \gam < 1$. If a pair $(q,r)$ satisfies the equation $\frac2q = 3(\frac12 - \frac1r) + \gam$, then we call it $\gam$-{\it admissible} pair. If $\gam = 0$, then it is called just {\it admissible}.
%%%%%%%%%%%%%%%%%%%%%%%%%%%%%%%%%%%%%%%%%%%%%%%%%%%%%%%%%%%%%%%%%%%%%%%%%%%%%%%%%%%%%%%%%%%%%%%%%%%%%%%%%%%%%%%%%%%%%%%%%%%%%%%%%%%%%%%%%%%%%%%%%%%%%%%%%%%%%%
\begin{lem}\cite{leeseo,kt}\label{stri-w}
	Let $(q,r)$ be $\gamma$-{\it admissible} and $(\tilde{q},\tilde{r})$ be $\tilde{\gamma}$-{\it admissible}. Then we have
	\begin{align*}
	\|e^{it\Delta}\vp\|_{L_t^qL_x^r(|x|^{-r\gamma})} &\les \|\vp\|_{L_x^2},\\
	\left\|\int e^{i(t-t')\Delta}F\,dt'\right\|_{L_t^qL_x^r(|x|^{-r\gamma})} &\les \|F\|_{L_t^{\tilde{q}'}L_x^{\tilde{r}'}(|x|^{\tilde{r}\tilde{\gamma}})}.\\
	\end{align*}
\end{lem}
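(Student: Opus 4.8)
The plan is to reduce both estimates to the single homogeneous weighted bound
$$
\big\|\,|x|^{-\gamma}e^{it\Delta}\vp\,\big\|_{L_t^qL_x^r}\les\|\vp\|_{L_x^2}\qquad\text{for every }\gamma\text{-admissible }(q,r),
$$
and to prove that one by a $TT^\ast$ argument driven by a \emph{weighted dispersive estimate}. Granting the homogeneous bound for $(q,r,\gamma)$ and $(\tilde q,\tilde r,\tilde\gamma)$, the Duhamel estimate follows by writing $\int e^{i(t-s)\Delta}F(s)\,ds=e^{it\Delta}\big(\int e^{-is\Delta}F(s)\,ds\big)$ and composing: the bound for $(q,r,\gamma)$ controls $e^{it\Delta}(\cdot)$ in $L_t^qL_x^r(|x|^{-r\gamma})$ by the $L_x^2$ norm of $\int e^{-is\Delta}F(s)\,ds$, and the dual of the bound for $(\tilde q,\tilde r,\tilde\gamma)$ controls that $L_x^2$ norm by the norm of $F$ in the dual of $L_t^{\tilde q}L_x^{\tilde r}(|x|^{-\tilde r\tilde\gamma})$, i.e.\ in the space on the right-hand side of the claim; the passage from $\int_{\mathbb R}$ to the retarded integral $\int_0^t$ is the Christ--Kiselev lemma, valid since $q>\tilde q'$ off the endpoint. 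Thus the whole content is the homogeneous estimate.

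The crux --- the step I expect to be the main obstacle --- is the weighted dispersive estimate
$$
\big\|\,|x|^{-\gamma}e^{i\tau\Delta}\big(|y|^{-\gamma}h\big)\,\big\|_{L_x^r}\les|\tau|^{-\gamma-3(\frac12-\frac1r)}\,\|h\|_{L_x^{r'}},\qquad\tau\neq 0,
$$
which takes over the role played by the bare $|\tau|^{-3/2}$ decay in the classical ($\gamma=0$) theory. To obtain it I would use the explicit Gaussian kernel of $e^{i\tau\Delta}$, complete the square $e^{i|x-y|^2/4\tau}=e^{i|x|^2/4\tau}e^{-ix\cdot y/2\tau}e^{i|y|^2/4\tau}$, and rescale to $\tau=1$: after stripping the two unimodular factors the operator becomes $h\mapsto|x|^{-\gamma}\,\mathcal{F}\big(|y|^{-\gamma}e^{i|y|^2/4}h\big)(x/2)$, so the required bound is exactly Pitt's inequality (equivalently, the Stein--Weiss inequality for the Fourier transform) $\big\||\xi|^{-\gamma}\,\mathcal{F}g(\xi)\big\|_{L^r_\xi}\les\big\||x|^{\gamma}g(x)\big\|_{L^{r'}_x}$. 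Its balance condition here reads $0=3(1-\tfrac1{r'}-\tfrac1r)$ and is automatic, while its only nontrivial requirement is $\gamma<3/r$ (together with $2\le r<\infty$); since $\gamma$-admissibility with $q\ge2$ forces $r\le 6/(1+2\gamma)$, one has $3/r\ge(1+2\gamma)/2>\gamma$, so Pitt's inequality is available throughout the admissible range ($r=\infty$ never occurs, and $r=2$ only when $\gamma\le1$, still covered by the diagonal case). Establishing this weighted stationary-phase estimate with the sharp exponent, uniformly over the range, is where the singularity of the weight genuinely enters.

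Given the weighted dispersive estimate, the rest is routine. With $T\vp:=|x|^{-\gamma}e^{it\Delta}\vp$, the homogeneous bound is equivalent to $\|TT^\ast G\|_{L_t^qL_x^r}\les\|G\|_{L_t^{q'}L_x^{r'}}$, and
$$
TT^\ast G(t)=\int_{\mathbb R}|x|^{-\gamma}e^{i(t-s)\Delta}\big(|y|^{-\gamma}G(s)\big)\,ds.
$$
By Minkowski's inequality and the weighted dispersive estimate the left-hand side is $\les\big\|\,|t|^{-\gamma-3(\frac12-\frac1r)}\ast_t\|G(\cdot)\|_{L_x^{r'}}\,\big\|_{L_t^q}$, and the one-dimensional Hardy--Littlewood--Sobolev inequality closes this to $\les\|G\|_{L_t^{q'}L_x^{r'}}$ precisely when $\gamma+3(\frac12-\frac1r)=\frac2q$, i.e.\ on the $\gamma$-admissible line, provided $q>2$. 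This handles every non-endpoint pair; the case $q=\infty$ is the trivial $L_t^\infty L_x^2$ estimate, and the borderline $q=2$ --- where the one-dimensional kernel $|t|^{-1}$ sits on the forbidden $L^2\!\to\!L^2$ line --- together with the matching endpoint of the retarded inhomogeneous bound, is recovered by running the Keel--Tao bilinear dyadic/interpolation argument with the weighted dispersive estimate in place of the standard one. Combining these ingredients yields both inequalities of Lemma \ref{stri-w}.
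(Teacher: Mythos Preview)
The paper does not prove this lemma; it is cited from \cite{leeseo,kt}. Your $TT^\ast$ argument via a weighted dispersive bound is correct and gives a clean direct proof of the homogeneous estimate for every non-endpoint pair $q>2$: the reduction to Pitt's (Stein--Weiss) inequality is valid, the condition $\gamma<3/r$ is indeed implied by $\gamma$-admissibility together with $q\ge2$ as you check, and one-dimensional Hardy--Littlewood--Sobolev closes the $TT^\ast$ bound exactly on the scaling line $\gamma+3(\tfrac12-\tfrac1r)=\tfrac2q$. The inhomogeneous estimate via factorisation through $L^2$ (plus Christ--Kiselev for the retarded version) is likewise correct away from the double endpoint.

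The endpoint $q=2$, however, hides a genuine gap. ``Running the Keel--Tao bilinear argument with the weighted dispersive estimate in place of the standard one'' does not produce the fixed-$\gamma$ endpoint: the Keel--Tao mechanism interpolates between the energy space $L^2$ and the dispersive target, and the interpolation spaces $(L^2,\,L^r(|x|^{-r\gamma}))_{[\theta]}=L^{r_\theta}(|x|^{-r_\theta\cdot\theta\gamma})$ carry weight exponent $\theta\gamma$, not $\gamma$. Hence any such argument outputs a line through the unweighted $L^\infty_tL^2_x$ point with \emph{varying} weight, never the $\gamma$-admissible segment you are after. The standard remedy---and the reason the citation pairs \cite{leeseo} with \cite{kt}---is to obtain the entire $\gamma$-admissible line, endpoint included, by complex interpolation between the unweighted Keel--Tao endpoint $\|e^{it\Delta}\varphi\|_{L^2_tL^6_x}\lesssim\|\varphi\|_{L^2}$ and the Kato--Yajima smoothing estimate $\||x|^{-1}e^{it\Delta}\varphi\|_{L^2_{t,x}}\lesssim\|\varphi\|_{L^2}$. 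This is precisely the device the paper itself employs for the closely related radial refinement in Lemma~\ref{refined}, and it also delivers the double-endpoint retarded inhomogeneous bound that Christ--Kiselev cannot reach.
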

Note that the pair $(q_0, r_1)$ with $\frac1{r_1} =  \frac1{r_0} + \frac13$ is a $\gm$-admissible pair, where $q_0, r_0, \gm$ is defined as \eqref{adm-pair}. Every weighted Strichartz pair satisfies the $\dot H^1$-scaling invariance, that is, $\|\nabla u_\lambda\|_{L_t^qL_x^r(|x|^{-r\gam})} = \|\nabla u\|_{L_t^qL_x^r(|x|^{-r\gam})}$ for any $\gam$-admissible pair $(q, r)$.

%%%%%%%%%%%%%%%%%%%%%%%%%%%%%%%%%%%%%%%%%%%%%%%%%%%%%%%%%%%%%%%%%%%%%%%%%%%%%%%%%%%%%%%%%%%%%%%%%%%%%%%%%%%%%%%%%%%%%%%%%%%%%%%%%%%%%%%%%%%%%%%%%%%%%%%%%%%%%%%%%%%%
Now fix $\frac43 \le b < \frac32$ and set $p = 5 - 2b$. For a small $0< \ve < \frac38(p-2)$ we define numbers associated with the weighted Strichartz estimate such that
$$
\theta = 2p-\frac{10}{3} -4p\ve, \;\; \gm = \frac12 -4\ve,
$$
\begin{align*}
\frac1{q^*}&= \frac{p}4-\frac{\theta}4+\left(\frac{\theta}2 -\frac23\right)\ve = \frac5{6} -\frac{p}4 +\left(2p -\frac{7}3\right)\ve -2p\ve^2,
\end{align*}
and
\begin{align*}
\frac1{r^*}&= \frac{10}{9} -\frac{p}{6}-\frac{\theta}6 + \left(\theta+\frac49\right)\ve=\frac53 -\frac{p}2 +\left(\frac{8}3p-\frac{26}{9}\right)\ve -4p\ve^2
\end{align*}
 Then $\bigg((p-\theta)q^*,\;(p-\theta)r^*\bigg)$ is $\gm$-{\it admissible}.

Next we introduce a Hardy-Littlewood-Sobolev inequality of weighted type.
\begin{lem}\cite{sw}\label{emb-w}
	Let $f \in W^{1, r}(|x|^{r\beta})$ for $1 < r < \infty$ and $\frac13 < \frac{\beta}3 + \frac1r < 1$. If $\alpha \le \beta$, $\frac1q = \frac1r - \frac{1+\alpha-\beta}3$, and $r \le q < \infty$, then we have
	$$
	\||x|^{\alpha}f\|_{L_x^q} \le C \||x|^{\beta}|\nabla| f\|_{L_x^r}.
	$$
\end{lem}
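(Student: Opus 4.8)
The plan is to prove this by combining the Riesz potential representation of $f$ with the classical Stein--Weiss weighted Hardy--Littlewood--Sobolev inequality \cite{sw}. I would first reduce to a dense class of smooth functions --- say $f\in\mathcal S(\rt)$ --- for which all the manipulations below are legitimate, and then recover the general case by a routine limiting argument once the estimate is established on that class. For such $f$ one writes
$$
f = (-\Delta)^{-\frac12}\big((-\Delta)^{\frac12}f\big) = c_3\int_{\rt}\frac{(|\nabla| f)(y)}{|x-y|^2}\,dy =: c_3\,I_1\big(|\nabla|f\big),
$$
since on $\rt$ the operator $(-\Delta)^{-\frac12}=I_1$ is convolution with a constant multiple of $|x|^{-2}$, the Riesz potential of order $1$.

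It then remains to invoke the Stein--Weiss inequality in the form: for $g$ in the relevant weighted Lebesgue space,
$$
\big\||x|^{\alpha}\,I_1 g\big\|_{L_x^q}\lesssim\big\||x|^{\beta}g\big\|_{L_x^r},
$$
which is valid provided $1<r\le q<\infty$, $\alpha\le\beta$, $-\alpha<\tfrac3q$, $\beta<\tfrac3{r'}$, and the scaling identity $\tfrac1q=\tfrac1r-\tfrac{1+\alpha-\beta}{3}$ holds; applying it with $g=|\nabla|f$ returns exactly the asserted bound. So the only genuine content is to check that the hypotheses of the lemma force these admissibility conditions. The scaling identity and $\alpha\le\beta$ are assumed outright; the upper bound $\tfrac{\beta}{3}+\tfrac1r<1$ is precisely $\beta<3\big(1-\tfrac1r\big)=\tfrac3{r'}$; and inserting the lower bound $\tfrac13<\tfrac{\beta}{3}+\tfrac1r$ into the scaling identity gives $\tfrac1q=\tfrac1r-\tfrac{1+\alpha-\beta}{3}>\tfrac{1-\beta}{3}-\tfrac{1+\alpha-\beta}{3}=-\tfrac{\alpha}{3}$, that is $-\alpha<\tfrac3q$. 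Thus every requirement of Stein--Weiss is met.

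I do not expect a serious obstacle, since the argument is little more than bookkeeping once the Riesz representation is recorded; the cited inequality \cite{sw} does the real work. The two points that call for mild care are: (i) justifying the identity $f=c_3\,I_1(|\nabla|f)$ and the density of the smooth class for $f$ merely in $W^{1,r}(|x|^{r\beta})$ --- which is why I would prove the inequality first for Schwartz $f$ and then pass to the limit, using the inequality applied to differences to deduce convergence of $|x|^{\alpha}f_n$ in $L^q_x$; and (ii) the strict inequalities in the statement, namely that the borderline cases $\tfrac{\beta}{3}+\tfrac1r\in\{\tfrac13,1\}$, as well as $r=1$ and $q=\infty$, are genuinely excluded in Stein--Weiss and hence must be excluded in the lemma.
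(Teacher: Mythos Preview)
Your proposal is correct: the paper does not supply its own proof of this lemma but simply cites it from \cite{sw}, and your argument---writing $f=c_3\,I_1(|\nabla|f)$ via the Riesz potential and then invoking the Stein--Weiss weighted Hardy--Littlewood--Sobolev inequality---is precisely the standard derivation from that source. Your verification of the admissibility conditions ($-\alpha<3/q$, $\beta<3/r'$, $\alpha\le\beta$, and the scaling identity) is accurate, so there is nothing to add.
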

Since $ 0 < (\frac{14}{3} -2p)\ve +4p\ve^2$ and hence $\frac13 < -\frac{\gm}3 +\frac1{(p-\theta)r^*} < 1$, one can
apply Lemma \ref{emb-w} with $\alpha = -\gm - 1$, $\beta = -\gm$, $r = q = (p - \theta)\gm$ as follows:
\begin{align}\label{em-lwp}
\||x|^{-\gm-1}u\|_{L_x^{(p-\theta)r^*}} \le C \||x|^{-\gm}\nabla u\|_{L_x^{(p-\theta)r^*}}.
\end{align}

The following lemma is on the nonlinear estimate.
\begin{lem}\label{wei-esti}
Let $\theta, \gm$ be as above and $\frac43\le b < \frac32$. Then there exists $(\widetilde{q},\widetilde{r}),\; (q_2,r_2)$, and $\widetilde{\gam}$ such that $(\widetilde{q},\widetilde{r})$ is $\widetilde{\gam}$-{\it admissible}, $(q_2, r_2)$ is $\gm$-{\it admissible}, and
$$
\|\nabla (g|u|^{p-1}u)\|_{L_t^{\widetilde{q}'}L_x^{\widetilde{r}'}(|x|^{\widetilde{\gam}\widetilde{r}'})} \le C\|u\|_{L_t^{q_0}L_x^{r_0}(|x|^{-\gm r_0})}^{\theta}\|\nabla u\|_{L_t^{q_2}L_x^{r_2}(|x|^{-\gm r_2})}^{p-\theta}
$$
\end{lem}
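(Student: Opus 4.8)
The plan is to strip off the coefficient $g$ using the pointwise bounds in \eqref{scaling}, reduce to power-type expressions, and then close the estimate by H\"older's inequality in space and time combined with the weighted Hardy--Sobolev embedding \eqref{em-lwp}. First I would apply the Leibniz rule,
\[
\nabla\bigl(g|u|^{p-1}u\bigr) = (\nabla g)\,|u|^{p-1}u + g\,\nabla\bigl(|u|^{p-1}u\bigr),
\]
and use $|\nabla(|u|^{p-1}u)|\les|u|^{p-1}|\nabla u|$ (legitimate as $p>2$) together with $g(x)\les|x|^{-b}$ and $|\nabla g(x)|\les|x|^{-1-b}$ from \eqref{scaling} to obtain the pointwise bound
\[
\bigl|\nabla\bigl(g|u|^{p-1}u\bigr)\bigr| \les |x|^{-1-b}|u|^{p} + |x|^{-b}|u|^{p-1}|\nabla u| =: \mathrm{I}+\mathrm{II}.
\]
Since, by the notation convention, the target norm is $\||x|^{\widetilde{\gam}}(\,\cdot\,)\|_{L_t^{\widetilde{q}'}L_x^{\widetilde{r}'}}$, it suffices to bound $\||x|^{\widetilde{\gam}}\mathrm{I}\|$ and $\||x|^{\widetilde{\gam}}\mathrm{II}\|$ in $L_t^{\widetilde{q}'}L_x^{\widetilde{r}'}$ by the right-hand side.

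For the principal term I would write $|u|^{p}=|u|^{\theta}\cdot|u|^{p-\theta}$ and split the weight as $|x|^{\widetilde{\gam}-1-b}=|x|^{-\gm\theta}\cdot|x|^{(-\gm-1)(p-\theta)}$; requiring the leftover exponent to vanish is exactly the identity that pins down $\widetilde{\gam}$ in terms of $b,\gm,\theta$, and it is compatible with the values fixed before the statement. H\"older in $x$ and in $t$, with $\tfrac{\theta}{r_0}+\tfrac{1}{r^*}=\tfrac{1}{\widetilde{r}'}$ and $\tfrac{\theta}{q_0}+\tfrac{1}{q^*}=\tfrac{1}{\widetilde{q}'}$, then gives
\[
\bigl\||x|^{\widetilde{\gam}}\mathrm{I}\bigr\|_{L_t^{\widetilde{q}'}L_x^{\widetilde{r}'}} \les \|u\|_{L_t^{q_0}L_x^{r_0}(|x|^{-\gm r_0})}^{\theta}\ \bigl\||x|^{-\gm-1}u\bigr\|_{L_t^{(p-\theta)q^*}L_x^{(p-\theta)r^*}}^{p-\theta},
\]
and applying \eqref{em-lwp} for a.e.\ $t$ replaces $\||x|^{-\gm-1}u\|_{L_x^{(p-\theta)r^*}}$ by $\||x|^{-\gm}\nabla u\|_{L_x^{(p-\theta)r^*}}$; since $((p-\theta)q^*,(p-\theta)r^*)$ is $\gm$-admissible this is precisely the asserted estimate with $(q_2,r_2)=((p-\theta)q^*,(p-\theta)r^*)$ and with $(\widetilde{q},\widetilde{r})$, $\widetilde{\gam}$ read off from the H\"older relations. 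For $\mathrm{II}$ the explicit $\nabla u$ already provides one of the $p-\theta$ derivative factors, so I would split instead $|u|^{p-1}=|u|^{\theta}\cdot|u|^{p-1-\theta}$, which is legitimate exactly when $p-1-\theta\ge0$, i.e.\ $b\ge\tfrac43$; one assigns $|x|^{-\gm}$ to the $\theta$ factors, $|x|^{-\gm-1}$ to the remaining $p-1-\theta$ copies of $u$ and $|x|^{-\gm}$ to $\nabla u$, checks that the leftover weight vanishes for the same $\widetilde{\gam}$, and runs the identical H\"older/\eqref{em-lwp} argument to get the same bound.

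The bookkeeping above is routine; the real point is to check that the parameters can be chosen so that every pair occurring is an honest Strichartz pair---admissibility index in $[0,1)$ and Lebesgue index $\ge2$---and, above all, that the hypotheses of Lemma \ref{emb-w} are met when applying \eqref{em-lwp}, namely $\tfrac13<-\tfrac{\gm}{3}+\tfrac{1}{(p-\theta)r^*}<1$ (together with $-\gm-1\le-\gm$ and equal Lebesgue exponents, which are automatic). It is this last condition that forces the upper restriction $b<\tfrac32$ and makes it necessary to take $\ve>0$ small, depending on $b$, so that the borderline equalities holding at $\ve=0$ become the strict inequalities required; with these verifications the lemma follows from the steps above.
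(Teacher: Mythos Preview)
Your proposal is correct and follows essentially the same route as the paper: Leibniz rule plus the pointwise bounds from \eqref{scaling}, the same H\"older splitting placing $\theta$ copies of $u$ into $L_t^{q_0}L_x^{r_0}(|x|^{-\gm r_0})$ and the remaining $p-\theta$ factors into $L_t^{q_2}L_x^{r_2}$ with $(q_2,r_2)=((p-\theta)q^*,(p-\theta)r^*)$, followed by \eqref{em-lwp} to trade $|x|^{-\gm-1}u$ for $|x|^{-\gm}\nabla u$. The paper carries out the bookkeeping explicitly, arriving at $\widetilde\gamma=\tfrac16$ and the concrete values of $\widetilde q,\widetilde r$, and records (in Remark~\ref{maincondi}) exactly the two constraints you flag---$p-1-\theta\ge0$ (forcing $b\ge\tfrac43$) and the hypothesis $\tfrac13<-\tfrac{\gm}{3}+\tfrac{1}{(p-\theta)r^*}$ of Lemma~\ref{emb-w} (forcing $b<\tfrac32$).
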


\begin{proof}
Let $\frac1{\widetilde{q}} = 1-\frac{p}4+\frac23\ve,\; \frac1{\widetilde{r}}=\frac{p}{6} -\frac19-\frac49\ve$ for $0 < \ve < \frac38(p -2)$. Then $(\widetilde{q},\widetilde{r})$ is $\frac16$-{\it admissible}. By the scaling condition \eqref{scaling} and Lemma \ref{emb-w} we obtain
\begin{align}
\begin{aligned}\label{w-esti}
\||x|^{-b-1}|u|^{p-1}u\|_{L_t^{\widetilde{q}'}L_x^{\widetilde{r}'}(|x|^{\frac16\widetilde{r}'})} &=  \||x|^{-b-1+\frac16}|u|^{p-1}u\|_{L_t^{\widetilde{q}'}L_x^{\widetilde{r}'}}\\
&\le C \||x|^{-\gm}u\|_{L_t^{q_0}L_x^{r_0}}^{\theta}\||x|^{-\gm-1}u\|_{L_t^{q_2}L_x^{r_2}}^{p-\theta}\\
&\le C \||x|^{-\gm}u\|_{L_t^{q_0}L_x^{r_0}}^{\theta}\||x|^{-\gm}\nabla u\|_{L_t^{q_2}L_x^{r_2}}^{p-\theta}
\end{aligned}\end{align}
and
\begin{align}
\begin{aligned}\label{w-esti2}
\||x|^{-b}|u|^{p-1}\nabla u\|_{L_t^{\widetilde{q}'}L_x^{\widetilde{r}'}(|x|^{\frac16\widetilde{r}'})} &=  \||x|^{-b+\frac16}|u|^{p-1}\nabla u\|_{L_t^{\widetilde{q}'}L_x^{\widetilde{r}'}}\\
&\le C \||x|^{-\gm}u\|_{L_t^{q_0}L_x^{r_0}}^{\theta}\||x|^{-\gm-1}u\|_{L_t^{q_2}L_x^{r_2}}^{p-1-\theta}\||x|^{-\gm}\nabla u\|_{L_t^{q_2}L_x^{r_2}}\\
&\le C \||x|^{-\gm}u\|_{L_t^{q_0}L_x^{r_0}}^{\theta}\||x|^{-\gm}\nabla u\|_{L_t^{q_2}L_x^{r_2}}^{p-\theta}.
\end{aligned}\end{align}
where $\frac1{q_0}=\frac14 -\frac{\ve}{2},\; \frac1{r_0}=\frac16 - \ve$ and $q_2=(p-\theta)q^*,\;r_2 = (p-\theta)r^*$ with $(q^*,r^*)$ is as in above.
Here we used the H\"older pairs such that
\begin{align*}
\frac1{\widetilde{q}'}=\frac{p}4 - \frac 23 \ve & =  \theta\left(\frac14 -\frac{\ve}2\right) + \frac{p}4-\frac{\theta}4+\left(\frac{\theta}2 -\frac23\right)\ve = \frac{\theta}{q_0} +  \frac{p-\theta}{q_2}\\
\frac1{\widetilde{r}'}=\frac89 -\frac{p}6 + \frac49\ve & = \theta\left(\frac16 -\ve \right) + \frac{10}{9} -\frac{p}{6}-\frac{\theta}6 + \left(\theta+\frac94\right)\ve = \frac{\theta}{r_0} + \frac{p-\theta}{r_2}.
\end{align*}
This completes the proof of Lemma \ref{wei-esti}.
\end{proof}

\begin{rem}\label{maincondi}
 The condition $0 < (\frac{14}{3} -2p)\ve + 4p\ve^2$ is essential for the inequality \eqref{em-lwp}. Also, the condition $0 < \ve < \frac38(p -2)$ is necessary for $(\widetilde{q},\widetilde{r})$ to be $\frac16$-{\it admissible}. These constraints say that $p$ must satisfy that $2 < p \le \frac73$, that is, $\frac43 \le b < \frac32$.
\end{rem}

From now on we denote
$$
S_w(I)=L_I^{q_0}L_x^{r_0}(|x|^{-r_0\gm}),\;\; Y_{1,w}(I)=L_I^{q_0}L_x^{r_1}(|x|^{-r_0\gm}), \;\;\mbox{and}\;\;  Y_{2,w}(I)=L_I^{q_2}L_x^{r_2}(|x|^{-r_2\gm}),
$$
where $q_0,\;q_2,\; r_0,\;r_1,$ and $r_2 $ are same as stated above.

%%%%%%%%%%%%%%%%%%%%%%%%%%%%%%%%%%%%%%%%%%%%%%%%%%%%%%%%%%%%%%%%%%%%%%%%%%%%%%%%%%%%%%%%%%%%%%%%%%%%%%%%%%%%%%%%%%%%%%%%%%%%%%%%%%%%%%%%%%%%%%%%%%%%%%%%%%%%%%%%%%%%%%%%%
The local well-posedness is shown in \cite{leeseo}. However, we need a different LWP result adapted to concentration-compactness argument. We now state our LWP result.
\begin{prop}\label{lwp}
	Let $\varphi\in\h, 0\in I$ an interval, and $\frac43 \le b <\frac32$. Assume that $\|\varphi\|_{\h}\le A$. Then there exists $\delta = \delta(A)$ satisfied following:	
	If $\|e^{it\Delta}\varphi\|_{S_w(I)} < \delta$, then there exists a unique solution $u$ of \eqref{maineq} in $I \times \rt$ with $u\in C(I;\h(\rt))$,
	$$
	\|u\|_{S_w(I)}\le 2\delta,\;\;\mbox{and} \;\; \|\nabla u\|_{\yi}<\infty\;\;(i=1,2).
	$$
	
	In particular, if $\varphi_k \to \varphi$ in $\h$, then the corresponding solutions $u_k \to u$ in $C(I; \h)$ as $k \to \infty$.
\end{prop}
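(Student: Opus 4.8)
The plan is a contraction mapping argument for the Duhamel map
\[
\Phi(u)(t)=e^{it\Delta}\vp+i\int_0^te^{i(t-t')\Delta}g|u(t')|^{p-1}u(t')\,dt'
\]
from \eqref{inteq}. Fix $A$; let $M=M(A)$ and $\de=\de(A)$ be chosen below and set
\[
X=\Big\{u\in C(I;\h):\ \|u\|_{S_w(I)}\le 2\de,\ \|\nabla u\|_{Y_{1,w}(I)}+\|\nabla u\|_{Y_{2,w}(I)}\le M\Big\},
\]
a complete metric space for the weaker distance $d(u,v)=\|u-v\|_{S_w(I)}+\|\nabla(u-v)\|_{Y_{1,w}(I)}+\|\nabla(u-v)\|_{Y_{2,w}(I)}$. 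It then suffices to show that, for $\de$ small depending only on $A$, $\Phi$ maps $X$ into $X$ and is a $d$-contraction; its fixed point is the asserted solution, and the remaining claims (time continuity, uniqueness, continuous dependence) follow by routine additions.

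\emph{Self-mapping.} Since $(q_0,r_1)$ and $(q_2,r_2)=((p-\theta)q^*,(p-\theta)r^*)$ are $\gm$-admissible, the homogeneous estimate of Lemma \ref{stri-w} gives $\|\nabla e^{it\Delta}\vp\|_{Y_{1,w}(I)}+\|\nabla e^{it\Delta}\vp\|_{Y_{2,w}(I)}\les\|\vp\|_{\h}\le A$, and the Stein--Weiss embedding of Lemma \ref{emb-w} (the instance \eqref{em-lwp} and its analogue for the pair $(q_0,r_1)$) yields $\|u\|_{S_w(I)}\les\|\nabla u\|_{Y_{1,w}(I)}$. Applying the inhomogeneous estimate of Lemma \ref{stri-w} with the $\widetilde{\gam}$-admissible pair $(\widetilde{q},\widetilde{r})$ of Lemma \ref{wei-esti} to the Duhamel integral, followed by Lemma \ref{wei-esti} itself, gives
\[
\|\nabla\Phi(u)\|_{Y_{1,w}(I)}+\|\nabla\Phi(u)\|_{Y_{2,w}(I)}\le C\|\vp\|_{\h}+C\|u\|_{S_w(I)}^{\theta}\|\nabla u\|_{Y_{2,w}(I)}^{p-\theta},
\]
and $\|\Phi(u)\|_{S_w(I)}\le\|e^{it\Delta}\vp\|_{S_w(I)}+C\|u\|_{S_w(I)}^{\theta}\|\nabla u\|_{Y_{2,w}(I)}^{p-\theta}$. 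On $X$ the nonlinear term is $\le C(2\de)^{\theta}M^{p-\theta}$, and since $\theta>0$ and $p-\theta\ge1$ throughout $\frac43\le b<\frac32$ (cf. Remark \ref{maincondi}), one first takes $M=2CA$ and then $\de=\de(A)$ so small that $C(2\de)^{\theta}M^{p-\theta}\le\min\{M/2,\de\}$; together with the hypothesis $\|e^{it\Delta}\vp\|_{S_w(I)}<\de$ this forces $\Phi(X)\subset X$.

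\emph{Contraction and the remaining assertions.} For $u,v\in X$ I would start from the elementary pointwise bounds, available because $p>2$ (so $p-1>1$, $p-2>0$): $\big||u|^{p-1}u-|v|^{p-1}v\big|\les(|u|^{p-1}+|v|^{p-1})|u-v|$ and
\[
\big|\nabla(|u|^{p-1}u)-\nabla(|v|^{p-1}v)\big|\les\big(|u|^{p-1}+|v|^{p-1}\big)|\nabla(u-v)|+\big(|u|^{p-2}+|v|^{p-2}\big)\big(|\nabla u|+|\nabla v|\big)|u-v|,
\]
together with $|\nabla g|\les|x|^{-b-1}$ from \eqref{scaling}, and then re-run the H\"older/Stein--Weiss bookkeeping of the proof of Lemma \ref{wei-esti} on each resulting term, routing the undifferentiated factor $|u-v|$ either through the $S_w(I)$ slot or, via \eqref{em-lwp}, through the $Y_{2,w}(I)$ slot as a $\nabla(u-v)$ factor. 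This should yield a Lipschitz version of Lemma \ref{wei-esti} with a small constant $\eta(\de,M)$ satisfying $\eta(\de,M)\to0$ as $\de\to0$ for fixed $M$, hence $d(\Phi u,\Phi v)\le C\eta(\de,M)\,d(u,v)$, so shrinking $\de=\de(A)$ makes $\Phi$ a contraction with unique fixed point $u\in X$ solving \eqref{inteq}. That $u\in C(I;\h)$ follows from the Duhamel formula, the $(\infty,2)$-endpoint of Lemma \ref{stri-w}, and a density argument for time-continuity; uniqueness in the full class $C(I;\h)$ with finite weighted Strichartz norms follows by the usual subinterval-and-bootstrap argument. For continuous dependence, $\vp_k\to\vp$ in $\h$ gives $\|e^{it\Delta}(\vp_k-\vp)\|_{S_w(I)}\les\|\vp_k-\vp\|_{\h}\to0$, so for large $k$ each $\vp_k$ meets the smallness hypothesis with $\de$ replaced by $2\de$; subtracting the Duhamel formulas for $u_k$ and $u$ and using the same difference estimates gives $d(u_k,u)\les\|\vp_k-\vp\|_{\h}+C\eta(2\de,M)\,d(u_k,u)$, so $d(u_k,u)\to0$, and one further application of Lemma \ref{stri-w} to the difference equation upgrades this to $u_k\to u$ in $C(I;\h)$.

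\emph{Main obstacle.} The delicate step is the difference (Lipschitz) estimate for the nonlinearity: the term $(|u|^{p-2}+|v|^{p-2})(|\nabla u|+|\nabla v|)|u-v|$ carries the coefficient $|u|^{p-2}$, which for $2<p\le\frac73$ is only H\"older continuous ($p-2\le\frac13$) and supplies fewer powers of $u$ to distribute among the $S_w(I)$- and $Y_{2,w}(I)$-slots than in the homogeneous bound of Lemma \ref{wei-esti}; one must check, using \eqref{em-lwp} and, where necessary, admissible exponents slightly different from $(\widetilde{q},\widetilde{r})$ and $(q_2,r_2)$, that every resulting term closes with a strictly positive power of $\de$ and no negative power of a small quantity. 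This is exactly the point at which the constraint $2<p\le\frac73$, equivalently $\frac43\le b<\frac32$, re-enters the argument; the rest is routine fixed-point bookkeeping.
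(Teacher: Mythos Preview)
Your proposal is correct and follows essentially the same contraction-mapping approach as the paper, invoking the same ingredients (Lemmas \ref{stri-w}, \ref{emb-w}, \ref{wei-esti}) for both the self-mapping and the difference estimates. The only cosmetic differences are that the paper carries the $L_I^\infty\h$-norm in the ball and in the metric from the outset (whereas you recover it afterward via the $(\infty,2)$-endpoint), and it writes out the Lipschitz bound you flag as the obstacle, routing the factors exactly as you describe and arriving at $d(\Phi(v),\Phi(v'))\le C r^{\theta}s^{p-1-\theta}d(v,v')$ with $r=2\de$.
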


\begin{proof} We use the contraction mapping principle. To this end we fix $r, s > 0$, to be chosen later. Let us define a complete metric space $(\mathcal{M}_{r, s}, d)$ and a mapping $\Phi$ as following:
	\begin{align*}
	&\mathcal{M}_{r, s} = \{v \in C(I; \h) : \|v\|_{L_I^\infty \h} \le 2A, \;\|v\|_{\s}\le r,\; \|\nabla v\|_{\yi}\le s \;(i=1,2)\},\\
	&d(u, v) = \|v - v'\|_{L_I^\infty \h} + \|v-v'\|_{\s} + \sum_{i = 1,2}\|\nabla(v-v')\|_{\yi},\\
	&\Phi(v) = e^{it\Delta}\varphi + i \int_{0}^{t} e^{i(t-t')\Delta}f(v)dt',\quad f(v) = g|v|^{p-1}v, \quad p = 5-2b.
	\end{align*}
	By Lemma \ref{wei-esti} we obtain for each $i=1,\, 2$ that
	\begin{align}
	\begin{aligned}\label{L10}
	\|\nabla\Phi(v)\|_{\yi} &\le C\|\nabla \varphi\|_{L_x^2} + C\|\nabla f(v)\|_{L_I^{\widetilde{q}'}L_x^{\widetilde{r}'}(|x|^{\frac16\widetilde{r}'})}\\
	& \le CA+C\|v\|_{\s}^{\theta}\|\nabla v\|_{Y_{2,w}(I)}^{p-\theta}\\
	& \le C(A+r^{\theta}s^{p-\theta}).
	\end{aligned}\end{align}
	Now we take $r,s$ satisfied $s\le 2AC$ and $ Cp^{\theta}s^{p-\theta} \le \min(\frac12,\frac1{2C}) $. Then we get $\|\nabla\Phi(v)\|_{\yi} \le s$ for each $i=1,\,2$ and
	$$
	\|\Phi(v)\|_{L_t^{\infty}\h}\le A +Cr^{\theta}s^{p-\theta} \le 2A.
	$$
	
	By weighted-type Sobolev embedding (Lemma \ref{emb-w}) we may have
$$
\|\Phi(v)\|_{\s} \le \|e^{it\Delta}\vp\|_{\s} +C\left\|\nabla\int_{0}^{t} e^{i(t-t')\Delta}f(v)dt'\right\|_{Y_{1,w}(I)} \le \delta + Cr^{\theta}s^{p-\theta}.
$$
Hence $\|\Phi(v)\|_{\s} \le 2\delta$ for $r=2\delta$.
The above estimates yield that $\Phi$ is self-mapping on $\mathcal{M}_{r,s}$.

We next show $\Phi$ is contraction on $\mathcal{M}_{r,s}$.
	\begin{align*}
d(\Phi(v), \Phi(v')) &\le C\|(|v|^{p-1} + |v'|^{p-1})|x|^{-b-1}|v - v'|\|_{L_I^{\widetilde{q}'}L_x^{\widetilde{r}'}(|x|^{\frac16\widetilde{r}'})} + C\||x|^{-b}|v|^{p-1}|\nabla v - \nabla v'|\|_{L_I^{\widetilde{q}'}L_x^{\widetilde{r}'}(|x|^{\frac16\widetilde{r}'})}\\
&\qquad + C\||x|^{-b} |\nabla v'|(|v|^{p-2} + |v'|^{p-2})|v-v'|\|_{L_I^{\widetilde{q}'}L_x^{\widetilde{r}'}(|x|^{\frac16\widetilde{r}'})}\\
& \le C(\|v\|_{\s}^{\theta} + \|v'\|_{\s}^{\theta})\||x|^{-1}(v-v')\|_{Y_{2,w}(I)}^{p-\theta} +C \|v\|_{\s}^{\theta}\||x|^{-1}v\|_{Y_{2,w}(I)}^{p-1-\theta}\|\nabla(v-v')\|_{Y_{2,w}(I)} \\
&\qquad  +C\|v'\|_{Y_{2,w}(I)}(\|v\|_{\s}^{p-2} + \|v'\|_{\s}^{p-2})\|v-v'\|_{\s}^{\theta-p+2}\||x|^{-1}(v-v')\|_{Y_{2,w}(I)}^{p-1-\theta}
\end{align*}
This follow that
$$
d(\Phi(v), \Phi(v')) \le C r^{\theta}s^{p-1-\theta} d(v,v')
$$
Hence $\Phi$ is contraction on $\mathcal{M}_{r,s}$ for $Cr^{\theta}s^{p-1-\theta}<1$.

	The continuous dependence on initial data follows immediately from the above contraction argument. This completes the proof of Proposition \ref{lwp}.
\end{proof}

\begin{rem}
	\begin{enumerate}
		\item [$(i)$]\label{blowup-cri} (Blowup criterion)
		Proposition \ref{lwp} implies the existence of maximal existence time interval $I^*$. Moreover, one can immediately deduce the blowup criterion: if $\|u\|_{\sst} < +\infty$, then $I^* = \mathbb R$, and if $I^*$ is bounded, then $\|u\|_{\sst} = +\infty$. We also conclude that if $\|\varphi\|_{\h}$ is sufficiently small, then $I^* = \mathbb R$.
		\item [$(ii)$]\label{scattering}($\h$ scattering)
		Suppose that $I^* = \mathbb R$ and $\|u\|_{\sst} < +\infty$. Let us set
		$$
		\varphi_\pm := \varphi + i\int_0^{\pm \infty} e^{-it'\Delta}[ g|u|^{p-1}u]\,dt'.
		$$
		Then the solution $u$ scatters to $e^{it\Delta}\varphi_\pm$ in $\h$ by standard duality argument.
		\item [$(iii)$] \label{m-e-cons}(Mass-energy conservation)
		Let us define the mass by $\|u(t)\|_{L_x^2}^2$ for the solution $u$ to \eqref{maineq}. Then we can readily get the mass conservation for initial data $\vp \in H^1$. Also, energy conservation is established for the same initial data.
	\end{enumerate}
	
\end{rem}

%%%%%%%%%%%%%%%%%%%%%%%%%%%%%%%%%%%%%%%%%%%%%%%%%%%%%%%%%%%%%%%%%%%%%%%%%%%%%%%%%%%%%%%%%%%%%%%%%%%%%%%%%%%%%%%%%%%%%%%%%%%%%%%%%%%%%%%%%%%%%%%%%%%%%%%%%%%%%%%%%%%%%%%%%%%%%%%%%%%%%%%%%%%%%%%%%%%%%%%%%%%%%%%%%%%%%%%%%%%%%%%%%%%%%%%%%%%%%%%%%%%%%%%%%%%%

\begin{prop}[Long-time perturbation]\label{per}
	Let $g$ be a radial function satisfying \eqref{scaling} with $\frac43 \le b < \frac32$. Let $I\subset \mathbb{R}$ be a time interval containing $0$ and $\widetilde{u}$ be a radial function defined on $I \times \rt$. Assume that $\widetilde{u}$ satisfies following:
	$$
	\|\widetilde{u}\|_{L_t^{\infty}\h} \le A \;\;\mbox{and}\;\; \|\widetilde{u}\|_{\s}\le M
	$$
	for some constants $M, A > 0$ and
	$$
	i\partial_t\widetilde{u} + \Delta\widetilde{u} + f(\widetilde{u}) = e \;\;\mbox{for}\;\; (t,x)\in I\times \rt,
	$$
	where $f(\widetilde u)=g|\widetilde u|^{p-1}\widetilde u$ and that
	$$
	\|\vp-\widetilde{u}(0)\|_{\h} \le A',\; \|\nabla e\|_{L_I^{\widetilde{q}'}L_x^{\widetilde{r}'}(|x|^{\frac16\widetilde{r}'})}   \le \ve,\;\;\mbox{and}\;\; \|\nabla e^{it\Delta}[\varphi-\widetilde{u}(0)]\|_{\yi}\le \varepsilon \;\;(i=1,2)
	$$
	for $\widetilde{q},\widetilde{r},\widetilde{\gam}$ are as in the proof of Proposition \ref{lwp}. Then there exists $\varepsilon_0 = \varepsilon_0(M,A,A')$ and a unique solution $u \in C(I; \h_{rad})$ with $u(0)=\varphi$ in $I\times \mathbb{R}$, such that for $0 < \varepsilon < \varepsilon_0$ with
	$$
	\|u\|_{\s} \le C(M,A,A')\;\;\mbox{and}\;\; \|u(t)-\widetilde{u}(t)\|_{\h} \le A' + C(M,A,A')\varepsilon\;\;\mbox{for all}\;\; t \in I.
	$$

\end{prop}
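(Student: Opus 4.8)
This is the long--time perturbation (stability) statement, and the plan is to prove it by the familiar device of cutting $I$ into finitely many pieces on which $\widetilde u$ is small in $\s$, running a contraction--type bound for the difference $w=u-\widetilde u$ on each piece, and summing; throughout, the role of the Strichartz machinery is played by Lemmas \ref{stri-w}, \ref{emb-w} and \ref{wei-esti}, exactly as in Proposition \ref{lwp}. The one preliminary I would record first is an a priori gradient bound for $\widetilde u$ itself: the hypotheses only give $\|\widetilde u\|_{L^\infty_I\h}\le A$ and $\|\widetilde u\|_{\s}\le M$, so I would split $I$ into $N=N(M)$ consecutive subintervals on which $\|\widetilde u\|_{S_w(\cdot)}$ lies below a threshold $\de_0$ determined by the constants of Lemma \ref{wei-esti}, and on each of them use Duhamel's formula for $\widetilde u$ (which solves \eqref{maineq} with error $e$), Lemma \ref{stri-w}, Lemma \ref{wei-esti} and $\|\nabla\widetilde u(t)\|_{L^2_x}\le A$ to bootstrap (using $p-\theta>1$) the bound $\|\nabla\widetilde u\|_{Y_{1,w}(\cdot)}+\|\nabla\widetilde u\|_{Y_{2,w}(\cdot)}\les A+\ve$; finiteness needed to start the bootstrap comes from Proposition \ref{lwp} on a finer subdivision. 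Summing over the $N$ pieces yields $\|\nabla\widetilde u\|_{Y_{1,w}(I)}+\|\nabla\widetilde u\|_{Y_{2,w}(I)}\le\widetilde M=\widetilde M(M,A)$.

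Next I would fix a small $\eta>0$, to be pinned down in terms of $\widetilde M$, $A'$ and universal constants, and partition $I$ into $K=K(M,A,A')$ consecutive intervals $I_k=[t_{k-1},t_k]$ with $t_0=0$ and $\|\widetilde u\|_{S_w(I_k)}\le\eta$ (the part of $I$ to the left of $0$ is treated symmetrically). With $w=u-\widetilde u$, so $w(0)=\vp-\widetilde u(0)$ has $\|w(0)\|_{\h}\le A'$ and
\[
w(t)=e^{it\Delta}w(0)+i\int_0^t e^{i(t-s)\Delta}\big[\big(f(u)-f(\widetilde u)\big)+e\big]\,ds,\qquad f(v)=g|v|^{p-1}v,
\]
I would, on each $I_k$, reproduce the contraction estimate of Proposition \ref{lwp} for the difference. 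Using the pointwise bounds $|f(\widetilde u+w)-f(\widetilde u)|\les|x|^{-b}(|\widetilde u|+|w|)^{p-1}|w|$ and the corresponding one for $\nabla(f(\widetilde u+w)-f(\widetilde u))$ (with extra terms $|x|^{-b-1}(|\widetilde u|+|w|)^{p-1}|w|$, $|x|^{-b}(|\widetilde u|+|w|)^{p-1}|\nabla w|$ and $|x|^{-b}(|\widetilde u|+|w|)^{p-2}(|\nabla\widetilde u|+|\nabla w|)|w|$), then the Hölder splittings of Lemma \ref{wei-esti}, the embedding \eqref{em-lwp} and Lemma \ref{emb-w}, I obtain --- writing $N_k$ for the sum of $\|w\|_{S_w(I_k)}$, $\|\nabla w\|_{Y_{1,w}(I_k)}$, $\|\nabla w\|_{Y_{2,w}(I_k)}$ and $\|w\|_{L^\infty_{I_k}\h}$ --- an inequality of the schematic form
\[
N_k\les\|w(t_{k-1})\|_{\h}+\|\nabla e^{i(t-t_{k-1})\Delta}w(t_{k-1})\|_{Y_{1,w}(I_k)}+\|\nabla e^{i(t-t_{k-1})\Delta}w(t_{k-1})\|_{Y_{2,w}(I_k)}+\ve+\eta^{\theta}\widetilde M^{p-1-\theta}N_k+\widetilde M^{p-2}N_k^{2}+N_k^{p},
\]
all exponents being nonnegative because $2<p\le\frac73$. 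The free propagator term is rewritten, via Duhamel over $[0,t_{k-1}]$, as $\le\|\nabla e^{it\Delta}w(0)\|_{Y_{i,w}(I)}+C\|\nabla[(f(u)-f(\widetilde u))+e]\|_{L^{\widetilde q'}_{[0,t_{k-1}]}L^{\widetilde r'}_x(|x|^{\frac16\widetilde r'})}$, i.e.\ $\le C\ve+C\sum_{j<k}\big(\eta^\theta\widetilde M^{p-1-\theta}N_j+\widetilde M^{p-2}N_j^{2}+N_j^{p}\big)$.

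With this in hand I would choose $\eta$ small enough that $C\eta^{\theta}\widetilde M^{p-1-\theta}\le\frac14$, absorbing the linear term, and close the $N_k$-estimates one interval at a time by a continuity argument in the right endpoint of $I_k$ --- existence of $u$ on a short initial subinterval being supplied by Proposition \ref{lwp}, with continuation as long as $N_k$ stays below a threshold $\de_1=\de_1(\widetilde M)$ that also makes the quadratic and $p$-th order terms absorbable. Iterating the resulting recursion $N_k\les\|w(t_{k-1})\|_{\h}+\ve+\sum_{j<k}(N_j^{2}+N_j^{p})$ from $k=1$ (where $\|w(0)\|_{\h}\le A'$) through $k=K$, with $K=K(M,A,A')$ fixed and $\ve_0=\ve_0(M,A,A')$ chosen so small that all the $N_k$ indeed stay below $\de_1$, one gets $N_k\le C(M,A,A')$ for every $k$. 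In particular $\|u\|_{\s}\le\|\widetilde u\|_{\s}+\sum_k\|w\|_{S_w(I_k)}<\infty$, so by the blowup criterion stated after Proposition \ref{lwp} the solution $u$ extends to all of $I$; telescoping the $\h$-estimates and retaining the $\ve$-linear contributions gives $\|u(t)-\widetilde u(t)\|_{\h}\le A'+C(M,A,A')\ve$ for $t\in I$, and uniqueness is that of Proposition \ref{lwp}.

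The main obstacle, beyond the a priori step for $\nabla\widetilde u$, is the bookkeeping in the last two steps: one must track how the constants accumulate through the $K=K(M,A,A')$ subintervals and choose $\eta$ and $\ve_0$ so that the accumulation does not spoil the conclusion, and, more substantively, one must verify that \emph{every} term produced by $f(\widetilde u+w)-f(\widetilde u)$ and its gradient can be distributed, through the Hölder exponents of Lemma \ref{wei-esti} and the weighted embedding \eqref{em-lwp}, either as a factor that is linear in $(w,\nabla w)$ and carries $\|\widetilde u\|_{S_w(I_k)}^{\theta}\le\eta^{\theta}$ (a small coefficient, hence absorbable) or as a factor of total $(w,\nabla w)$-degree at least two. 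This last point is exactly where the constraint $\frac43\le b<\frac32$ is forced, precisely as in the proof of Proposition \ref{lwp} and Remark \ref{maincondi}.
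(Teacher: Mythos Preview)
Your overall plan is the paper's: first bootstrap to get $\|\nabla\widetilde u\|_{Y_{i,w}(I)}\le\widetilde M(M,A)$, then partition $I$ into finitely many subintervals on which the approximate solution is small, run a Duhamel/continuity argument for $w=u-\widetilde u$ on each, and propagate the linear-flow norm $\|\nabla e^{i(t-a_j)\Delta}w(a_j)\|_{Y_{i,w}}$ from interval to interval. The paper partitions according to $\|\nabla\widetilde u\|_{Y_{i,w}(I_j)}\le\eta$ rather than $\|\widetilde u\|_{S_w(I_k)}\le\eta$, which gives a cleaner linear coefficient $\eta^{p-1}$ instead of your $\eta^\theta\widetilde M^{p-1-\theta}$, but this is cosmetic.

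There is, however, one genuine gap in your scheme: you include $\|w\|_{L^\infty_{I_k}\h}$ in $N_k$. Since $N_k\ge\|w(t_{k-1})\|_{\h}$ and $\|w(0)\|_{\h}$ may be as large as $A'$ (which is \emph{not} assumed small), the continuity threshold ``$N_k\le\delta_1(\widetilde M)$'' you need in order to absorb the superlinear terms $\widetilde M^{p-2}N_k^2+N_k^p$ can fail from the start. The nonlinear estimate of Lemma~\ref{wei-esti} uses only the $S_w$ and $Y_{2,w}$ norms of $w$, never $\|w\|_{L^\infty\h}$; so the quantity that must stay small is $\|w\|_{S_w(I_k)}+\sum_i\|\nabla w\|_{Y_{i,w}(I_k)}$, and this \emph{is} small (of order $\varepsilon$) because the seed $\|\nabla e^{it\Delta}w(0)\|_{Y_{i,w}(I)}\le\varepsilon$ is small by hypothesis. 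The paper accordingly iterates only on $\sum_i\|\nabla w\|_{Y_{i,w}(I_j)}$, obtains geometric growth $\eta_{j+1}\le 10\eta_j$ starting from $\eta_1\sim\varepsilon$, chooses $\varepsilon_0$ so that $10^J\varepsilon_0$ stays below the absorption threshold, and only at the very end estimates $\|w\|_{L^\infty_I\h}\le A'+C(M,A,A')\varepsilon$ by one more application of Strichartz. Dropping $\|w\|_{L^\infty_{I_k}\h}$ from your $N_k$ and postponing that bound to the last line fixes the argument and brings it in line with the paper's proof.
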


\begin{proof}
Without loss of generality, we assume that $I =\left[0, a \right)$ for some $0 < a \le \infty$. By H\"older and Lemma \ref{emb-w} we get
$$
\|\nabla (g|u|^{p-1}u) \|_{L_I^{\widetilde{q}'}L_x^{\widetilde{r}'}(|x|^{\frac16\widetilde{r}'})} \les \|u\|_{\s}^{\theta}\|\nabla u\|_{Y_{2,w}(I)}^{p-\theta}.
$$
By the integral equation \eqref{inteq} for $\widetilde{u}$ and Proposition \ref{lwp} we also get
$$
\|\nabla \widetilde{u}\|_{Y_{i,w}(I_k)} \le CA + \rho \|\nabla\widetilde{u}\|_{Y_{i,w}(I_k)}^{p-\theta}
$$
for $\{I_k\}$ satisfying $\bigcup I_k = I$ and $C\|\widetilde{u}\|_{S_w(I_k)}^{\theta} \le \rho$. The continuous argument yields $ \|\nabla \widetilde{u}\|_{Y_{i,w}(I_k)} < 2CA$ for sufficiently small $\rho$ and hence one can readily obtain
$$
\|\nabla \widetilde{u}\|_{Y_{i,w}(I_k)} \le \widetilde{M}
$$
for some $\widetilde{M}=\widetilde{M}(M,A)$.

Next we write $u = \widetilde{u} +w$, so that the equation for $w$ is written as
\begin{eqnarray*}
	\left\{\begin{array}{l}
		iw_t + \Delta w = -f(\widetilde{u} +w) + f(\widetilde{u}) + e, \\
		w(0) = \varphi - \widetilde{u}(0).
	\end{array} \right.
\end{eqnarray*}
Then for arbitrary $\eta > 0$, there exists $I_j= \left[a_j,a_{j+1}\right)$ such that $\bigcup_{j=1}^JI_j = I$ and $\|\nabla \widetilde{u}\|_{Y_{i,w}(I_j)}\le \eta \;(i=1,2)$. On $I_j$ $w$ satisfies
$$
w(t) = e^{i(t-a_j)\Delta}w(a_j) + i \int_{a_j}^{t}e^{i(t-t')\Delta}(f(\widetilde{u} +w) -f(\widetilde{u}))dt' - i\int_{a_j}^{t}e^{i(t-t')\Delta}e(t')dt'.
$$
By Sobolev embedding and Lemma \ref{emb-w}, we get
\begin{align*}
\sum_{i=1}^{2}\|\nabla w\|_{Y_{i,w}(I_j)} &\le \left(\sum_{i=1}^{2}\|\nabla e^{i(t-a_j)\Delta}w(a_j)\|_{Y_{i,w}(I_j)}+2C\varepsilon\right) + C\eta^{p-1}\sum_{i=1}^{2}\|\nabla w\|_{Y_{i,w}(I_j)}\\
& \qquad\quad   +C\left(\sum_{i=1}^{2}\|\nabla w\|_{Y_{i,w}(I_j)}\right)^p.
\end{align*}
Thus, if $C\eta^{p-1} \le \frac13$, we have	
$$
\sum_{i=1}^{2}\|\nabla w\|_{Y_{i,w}(I_j)} \le \frac32\eta_j + C\left(\sum_{i=1}^{2}\|\nabla w\|_{Y_{i,w}(I_j)}\right)^p,
$$
where $\eta_j = \sum_{i=1}^{2}\|\nabla e^{i(t-a_j)\Delta}w(a_j)\|_{Y_{i,w}(I_j)}+ 2C\varepsilon$.

From the standard continuity argument, we can find $C_0 > 0$ satisfying that
$$
\sum_{i=1}^{2}\|\nabla w\|_{Y_{i,w}(I_j)} \le 3 \eta_j  \;\;\;\mbox{and}\;\;\;   C \left(\sum_{i=1}^{2}\|\nabla w\|_{Y_{i,w}(I_j)}\right)^p \le 3\eta_j,
$$
provided $\eta_j \le C_0$. Repeating the above argument for the equation
\begin{align*}
e^{i(t-a_{j+1})\Delta}w(a_{j+1}) = e^{i(t-a_j)\Delta}w(a_j) + i \int_{a_j}^{a_{j+1}}e^{i(t-t')\Delta}(f(\widetilde{u} +w) -f(\widetilde{u}))dt'\\
- i\int_{a_j}^{a_{j+1}}e^{i(t-t')\Delta}e(t')dt',
\end{align*}
we get
\begin{align*}
\sum_{i=1}^{2}\|e^{i(t-a_{j+1})\Delta}w(a_{j+1})\|_{Y_{i,w}(I_{j+1})} &\le \sum_{i=1}^{2}\|\nabla e^{i(t-a_j)\Delta}w(a_j)\|_{Y_{i,w}(I_{j+1})} + C\varepsilon\\
&\qquad \qquad+ C\eta^2\sum_{i=1}^{2}\|\nabla w\|_{Y_{i,w}(I_{j+1})} + C\left(\sum_{i=1}^{2}\|\nabla w\|_{Y_{i,w}(I_{j+1})}\right)^p.
\end{align*}
Taking a sufficiently small $\eta$ to satisfy $\eta_{j+1} \le 10 \eta_j$ provided $\eta_j \le C_0$. This always happens if $C10^{J}\varepsilon_0 < C_0$. With this $\varepsilon_0$ we have that for any $0 < \varepsilon < \varepsilon_0$
$$
\|w\|_{\s} + \|\nabla w\|_{Y_{1,w}(I)} + \|\nabla w\|_{Y_{2,w}(I)} \le 3C\sum_{j = 1}^J\eta_j \le \frac{C}3(10^{J+1}-1)\varepsilon.
$$
Hence by setting $C(M, A, A') = C(10^{J+1}-1)\varepsilon_0/3$ we obtain
$$
\|u\|_{\s} \le \|w\|_{\s} + \|\widetilde u\|_{\s} \le C(M, A, A').
$$

Using the Strichartz estimate and Hardy-Sobolev inequality(Lemma \ref{emb-w}) once more, we reach that
$$
\|w\|_{L_I^\infty \h} \le A' + C\varepsilon + C\sum_{j = 1}^J\|\nabla(f(\widetilde u + w) - f(\widetilde u))\|_{L_I^{\widetilde{q}'}L_x^{\widetilde{r}'}(|x|^{\frac16\widetilde{r}'})} \le A' + C(M, A, A')\varepsilon.
$$
\end{proof}

%%%%%%%%%%%%%%%%%%%%%%%%%%%%%%%%%%%%%%%%%%%%%%%%%%%%%%%%%%%%%%%%%%%%%%%%%%%%%%%%%%%%%%%%%%%%%%%%%%%%%%%%%%%%%%%%%%%%%%%%%%%%%%%%%%%%%%%%%%%%%%%%%%%%%%%%%%%%%%%%%%%%%%%%%%%%%%%%%%%%%%%%%%%%%%%%%%%%%%%%%%%%%%%%%%%%%%%%%%%%%%%%%%%%%%%%%%%%%%%%%%%%%%%%%%%%%%%%%%%%%%%%%%%%%%%%%%%%%%%%%%%%%%%%%%%%%%%%%%%%%%%%%%%%%%%%%%%%%%%%%%%%%%%%%%%%%%%%%%%%%%%%%%%%%%%%%%%%%%%%%%%%%%%%%%%%%%%%%%%%%%%%%%%%%%%%%%%%%%%%%%%%%%%%%%%%%%%%%%%%%%%%%%%%%%%%%%%%%%%%%%%%%%%%%%%%%%%%%%%%%%%%%%%%%%%%%%%%%%%%%%%%%%%%%%%%%%%%%%%%%%%%%%%%%%%%%%%%%%%%%%%%%%%%

\section{Profile decomposition}
In this section, we provide a new profile decomposition associated with weighted space. Since the concerned data are radially symmetric, we do not consider a general profile decomposition. Instead, we develop a decomposition adapted to radial data and follow the strategy of proof  as in \cite{chhwkwle}.  %We first state our proposition.
\begin{thm}\label{profile-thm}
	Let $\{u_{0, n}\} \subset \hr$ with $\|u_{0, n}\|_{\h}\le A$.% and $\|e^{it\Delta}v_{0,n}\|_{L_t^qL_x^r}(|x|^{-r\gam})\ge \delta > 0$, where $\delta$ is as in Proposition \ref{lwp}.
Then up to a subsequence $(\mbox{still called}\; \{u_{0, n}\})$ for any $J \ge 1$ there exists a sequence $\{U_{0,j}\}_{1 \le j \le J}$ and $W_n^J$ in $\hr$ and a family of parameters $(\ljn,\tjn)\in \mathbb{R}^+\times\mathbb{R}$ with
	$$
	\frac{\ljn}{\lam_{j',n}} + \frac{\lam_{j',n}}{\ljn} +\frac{|\tjn-t_{j',n}|}{\ljn^2} \xrightarrow{n\to\infty} \infty \quad  j\neq j'
	$$
	such that
	\begin{enumerate}
		%\item[$(i)$]   $\|U_{0,1}\|_{\h}\ge \alpha_0(A) > 0$,
		\item[$(i)$]  $u_{0,n}=\sum_{j=1}^{J} \ljn^{-\frac12}U_j^l\left(-\frac{\tjn}{\ljn^2}, \frac{x}{\ljn}\right) + W_n^J, \qquad U_j^l(t, x) := [e^{it\Delta}U_{0, j}](x)$ (linear solution),
		\item[$(ii)$] $\lim\limits_{J\to \infty}\underset{n\to\infty}{\limsup}\left\|\nabla e^{it\Delta}W_n^J\right\|_{L_t^qL_x^r(|x|^{-r\gam})} = 0$ for any $\gam$-admissible pair $(q, r)$ with $2 < q < \infty$,
		\item[$(iii)$]  $\|u_{0,n}\|_{\h}^2 = \sum_{j=1}^{J}\|U_{0, j}\|_{\h}^2 + \|W_n^J\|_{\h}^2 + o(1)$ as $n \to \infty$,
		\item[$(iv)$]   $E_g(u_{0,n}) = \sum_{j=1}^{J}E_g\left(\ljn^{-\frac12}U_j^l(-\frac{\tjn}{\ljn^2}, \frac{\cdot}{\ljn})\right) + E_g(W_n^J)+o(1)$ as $n \to \infty$.
        \item[$(v)$] If $\|\nabla e^{it\Delta}u_{0, n}\|_{L_t^qL_x^r(|x|^{-r\gam})} \ge \delta_0$ for some $\gam$-admissible pair $(q, r)$ and positive $\delta_0$, then there exist $J_0 \ge 1$ and $\alpha  = \alpha(A, \delta_0, J_0, q, r, \gam) > 0$ such that $\|U_{0, 1}\|_{\hr} \ge \alpha$.
        \end{enumerate}
\end{thm}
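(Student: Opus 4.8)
The plan is to follow the inductive profile-extraction scheme of Bahouri--G\'erard, Keraani and Kenig--Merle, but with the linear Strichartz norm replaced throughout by the weighted norm $\|\nabla e^{it\Delta}\,\cdot\,\|_{L_t^qL_x^r(|x|^{-r\gam})}$ over $\gam$-admissible pairs, and with the parameter set reduced to a scaling parameter $\ljn$ and a time parameter $\tjn$ only. The absence of a spatial-translation parameter is not a simplification but a necessity: the weight $|x|^{-\gam}$ is not translation invariant, so a translation parameter would be incompatible with the whole framework, and it is precisely the radial symmetry of $\{u_{0,n}\}$ (and of $g$) that makes the reduction to $(\ljn,\tjn)$ legitimate. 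This is why we work in $\hr$ and follow the radial strategy of \cite{chhwkwle}. The engine is an \emph{inverse weighted Strichartz inequality}; granting it, parts $(i)$--$(v)$ follow by now-standard bookkeeping adapted to the weighted setting.

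\textbf{Step 1 (inverse weighted Strichartz), and part $(v)$.} I would first prove: if $\{v_n\}\subset\hr$, $\|v_n\|_{\h}\le A$ and $\limsup_{n}\|\nabla e^{it\Delta}v_n\|_{L_t^qL_x^r(|x|^{-r\gam})}\ge\nu$ for some $\gam$-admissible pair with $2<q<\infty$, then, along a subsequence, there are $(\lam_n,t_n)\in\mathbb R^+\times\mathbb R$ and a nonzero $V\in\hr$ with
\[
\lam_n^{\frac12}\big(e^{i\lam_n^{-2}t_n\Delta}v_n\big)(\lam_n\,\cdot)\;\rightharpoonup\;V\ \text{ weakly in }\h,\qquad \|V\|_{\h}\;\gtrsim\;\nu\,\Big(\tfrac{\nu}{A}\Big)^{\beta}
\]
for some $\beta>0$ depending only on $b,\ve$. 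The proof should rest on a \emph{refined} weighted Strichartz estimate $\|\nabla e^{it\Delta}f\|_{L_t^qL_x^r(|x|^{-r\gam})}\les\|f\|_{\h}^{1-\theta}\,\mathcal N(f)^{\theta}$ for some $\theta\in(0,1)$, where $\mathcal N$ is a weaker frequency-localized (Besov-type) quantity: dyadically decompose $\nabla e^{it\Delta}f$, use Bernstein together with Lemma \ref{stri-w} at a cheaper $\gam$-admissible exponent on the high and low pieces, and interpolate. A lower bound on $\mathcal N(v_n)$ then selects a frequency scale $\lam_n^{-1}$ and a time $t_n$; the $(\lam_n,t_n)$-rescaling keeps the $\h$-norm bounded by $A$, so a weak limit $V$ exists, and one tests it against a fixed bump realizing the frequency concentration, using the Strauss radial embedding $\hr\hookrightarrow L^p_{\mathrm{loc}}$ ($2<p<6$, away from the origin) to transfer the lower bound to $V$. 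Part $(v)$ is essentially this step applied with $v_n=u_{0,n}$: a lower bound $\delta_0$ on \emph{any one} $\gam$-admissible norm transfers, by interpolating against Lemma \ref{stri-w} and the $q=\infty$ weighted Sobolev endpoint, to a lower bound $\delta_1(A,\delta_0)$ on the fixed norm driving the induction; since the first profile is extracted greedily, $\|U_{0,1}\|_{\hr}$ is (up to a factor $2$) the supremum of all admissible weak-limit norms and hence dominates every $\|U_{0,j}\|_{\hr}$, so $\|U_{0,1}\|_{\hr}\ge\alpha$. Choosing $J_0$ with $\limsup_n\|\nabla e^{it\Delta}W_n^{J_0}\|_{L_t^qL_x^r(|x|^{-r\gam})}<\delta_0/2$ (possible by $(ii)$) and distributing the remaining $\delta_0/2$ among the first $J_0$ profiles via Lemma \ref{stri-w} makes the dependence $\alpha=\alpha(A,\delta_0,J_0,q,r,\gam)$ explicit.

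\textbf{Step 2 (iteration; $(i)$, $(iii)$, parameter orthogonality; $(ii)$).} Set $W_n^0=u_{0,n}$ and $\nu_J:=\limsup_n\|\nabla e^{it\Delta}W_n^{J}\|$ in the fixed driving norm. If $\nu_{J-1}=0$ put $U_{0,J}=0$; otherwise apply Step 1 to $\{W_n^{J-1}\}$ to get $(\ljn,\tjn)$ and $U_{0,J}$ and define $W_n^J$ by $(i)$. Weak convergence yields, at each finite step, the $\h$ Pythagorean identity $\|W_n^{J-1}\|_{\h}^2=\|U_{0,J}\|_{\h}^2+\|W_n^J\|_{\h}^2+o(1)$, hence $(iii)$ by summation, together with $\sum_j\|U_{0,j}\|_{\h}^2\le A^2$ and $\|U_{0,J}\|_{\h}\to0$. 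The orthogonality $\ljn/\lam_{j',n}+\lam_{j',n}/\ljn+|\tjn-t_{j',n}|/\ljn^2\to\infty$ for $j\ne j'$ is the usual induction: if it failed along a subsequence for a pair $j<j'$, then the $(\ljn,\tjn)$-rescaling of $W_n^{j'-1}$ would still have a nonzero weak limit, contradicting both the greedy choice at stage $j$ and the fact that $W_n^{j'-1}$ is weakly null after its own rescaling. Finally $\nu_J\to0$, since Step 1 gives $\|U_{0,J}\|_{\h}\gtrsim\nu_{J-1}(\nu_{J-1}/A)^{\beta}$ with left side $\to0$; interpolating between the driving pair, a second $\gam$-admissible pair, and the $\h$-bound upgrades this to $\lim_{J\to\infty}\limsup_n\|\nabla e^{it\Delta}W_n^J\|_{L_t^qL_x^r(|x|^{-r\gam})}=0$ for every $\gam$-admissible $(q,r)$ with $2<q<\infty$, which is $(ii)$.

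\textbf{Step 3 (energy orthogonality $(iv)$) and the main obstacle.} The kinetic part of $E_g$ splits by $(iii)$; for the potential part $\frac1{p+1}\int g|u_{0,n}|^{p+1}$ I would expand via $(i)$ and kill all cross terms. Terms mixing two distinct profiles vanish by the parameter orthogonality (after rescaling their bulks separate in scale or in time; use H\"older and the scaling-invariant bound $\int|x|^{-b}|v|^{p+1}\les\|v\|_{\h}^{p+1}$ from Lemma \ref{emb-w}), and terms mixing a profile with $W_n^J$ vanish because $W_n^J$ is weakly null relative to that profile's $(\ljn,\tjn)$-rescaling, via a weighted Brezis--Lieb / refined-Fatou argument. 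Here one must also track the rescaled coefficient $\ljn^{b}g(\ljn\,\cdot)$: by \eqref{scaling} it stays in $[g_i,g_s]$, and since $|x||\nabla g|\les|x|^{-b}$ the quantity $|x|^bg$ varies slowly, so along a further subsequence $\ljn^{b}g(\ljn\,\cdot)$ converges to a constant when $\ljn\to0$ or $\ljn\to\infty$ and can be normalized to $g$ itself when $\ljn$ stays bounded and bounded away from $0$; this gives $(iv)$. The crux of the whole proof is Step 1: proving a refined weighted Strichartz estimate in the singular-weight radial setting and extracting from it a profile whose lower bound is \emph{quantitative} in $A$ and $\nu$, while ensuring that the scaling $\ljn$ — which may push a profile's mass into the origin or off to infinity, where $|x|^{-\gam}$ degenerates — is absorbed by the $\h$-scaling invariance of all the weighted norms and embeddings (the admissibility relation $\tfrac2q=3(\tfrac12-\tfrac1r)+\gam$ is exactly what guarantees this). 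A secondary, genuinely new point compared with the homogeneous energy-critical theory is the bookkeeping for $\ljn^b g(\ljn\,\cdot)$ in $(iv)$.
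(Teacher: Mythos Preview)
Your proposal is a valid strategy, but it organizes the argument differently from the paper. The paper first reduces $(i)$--$(iii)$ to an $L^2$ statement via $v_{0,n}=|\nabla|u_{0,n}$ and then runs a \emph{two-stage} decomposition in the style of \cite{chhwkwle}: a refined weighted Strichartz estimate of the concrete form
\[
\|e^{it\Delta}f\|_{L_t^qL_x^r(|x|^{-r\gam})}\lesssim\Big(\sup_k 2^{3k(\frac12-\frac1p)}\|\widehat{P_kf}\|_{L^p_\xi}\Big)^{\alpha}\|f\|_{L_x^2}^{1-\alpha}
\]
(obtained by interpolating an improved radial weighted Strichartz range---itself built from the radial Strichartz extension of \cite{chsl} and the Kato--Yajima smoothing $\|e^{it\Delta}f\|_{L^2_{t,x}(|x|^{-2})}\lesssim\|f\|_{L^2}$---against Hausdorff--Young on frequency-localized pieces) is used to peel off \emph{only} the frequency scales $\kjn$ (Lemma~\ref{decom-space}). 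A separate lemma (Lemma~\ref{decom-time}) then extracts, within each compactly Fourier-supported packet, the time parameters by taking weak $L^2$ limits and using the radial structure to show that the point of spatial concentration stays bounded. Part $(v)$ is proved \emph{after} $(i)$--$(iii)$ are in place, not during the extraction, via a fourth-power asymptotic orthogonality in the weighted norm (Lemma~\ref{limsup}):
\[
\delta_0^4\le\sum_j\|\nabla e^{it\Delta}U_{0,j}\|^4\le\Big(\sup_j\|U_{0,j}\|_{\h}\Big)^2\sum_j\|U_{0,j}\|_{\h}^2\le CA^2\Big(\sup_j\|U_{0,j}\|_{\h}\Big)^2,
\]
followed by relabeling the maximizing index to $j=1$. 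Part $(iv)$ is simply deferred to \cite{chkl3} with the remark that it does not involve the weighted norms at all; your $\ljn^{\,b}g(\ljn\cdot)$ bookkeeping is not needed for $(iv)$ as stated (the profiles there are already written in the original variable, so $g$ is fixed) and belongs rather to the later construction of nonlinear profiles. Your one-shot inverse-Strichartz scheme, extracting $(\ljn,\tjn)$ simultaneously with a quantitative lower bound on the weak limit, is the Killip--Visan style of organization and is also correct; it is more streamlined, while the paper's two-stage route makes the refined estimate and the role of radiality (via the boundedness of the concentration point) more explicit and closer to the existing literature it cites.
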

The proof of energy decomposition $(iv)$ is not involved with weighted space and it was given in \cite{chkl3}. We omit its proof. Suppose that $(i)-(iii)$ of Theorem \ref{profile-thm} have been shown. Then $(v)$ can be shown as follows (also see \cite{chhwoz}).
\begin{proof}[Proof of $(v)$]
From $(i)$ it follows that
$$
e^{it\Delta}u_{0,n}=\sum_{j=1}^{J} \ljn^{-\frac12} e^{i\left(t-\frac{\tjn}{\ljn^2}\right)\Delta}\left[U_{0,j}\left( \frac{\cdot}{\ljn}\right)\right](x) + e^{it\Delta}W_n^J.
$$
By Lemma \ref{limsup} below we have
$$
\lim_{n\to \infty} \left\| \nabla \left( \sum_{j=1}^{J} \ljn^{-\frac12} e^{i\left(t-\frac{\tjn}{\ljn^2}\right)\Delta}\left[U_{0,j}\left( \frac{\cdot}{\ljn}\right)\right](x) \right) \right\|_{L_t^qL_x^r(|x|^{-r\gam})}^4 \le \sum_{j=1}^J \|\nabla  e^{it\Delta} U_{0,j}\|_{L_t^qL_x^r(|x|^{-r\gam})}^4.
$$
Also, from $(ii)$ we obtain
\begin{align*}
&\limsup_{n\to\infty} \left\| \nabla e^{it\Delta}u_{0,n}   -  \nabla\left( \sum_{j=1}^{J} \ljn^{-\frac12} e^{i\left(t-\frac{\tjn}{\ljn^2}\right)\Delta}\left[U_{0,j}\left( \frac{\cdot}{\ljn}\right)\right](x) \right)  \right\|_{L_t^qL_x^r(|x|^{-r\gam})} \\
&\qquad \qquad\qquad= \limsup_{n\to\infty} \|\nabla e^{it\Delta} W_n^J\|_{L_t^qL_x^r(|x|^{-r\gam})} \xrightarrow{J \to \infty} 0.
\end{align*}
This yields that
$$
\delta_0^4 \le \limsup_{n\to\infty} \|\nabla e^{it\Delta}u_{0,n}\|_{L_t^qL_x^r(|x|^{-r\gam})}^4 \le \lim_{J\to \infty}  \sum_{j=1}^J \|\nabla e^{it\Delta} U_{0,j}\|_{L_t^qL_x^r(|x|^{-r\gam})}^4
$$
Thus there exists $J_0 \ge1$ such that $\frac{\delta_0^4}{2} \le \sum_{j=1}^J \|\nabla e^{it\Delta} U_{0,j}\|_{L_t^qL_x^r(|x|^{-r\gam})}^4$ for all $J \ge J_0$.
Therefore, using Strichartz estimates and $(iii)$, we get
\begin{align*}
\sum_{j=1}^J \| \nabla e^{it\Delta} U_{0,j}\|_{L_t^qL_x^r(|x|^{-r\gam})}^4 &\le \left( \sup_{1\le j \le J} \| \nabla e^{it\Delta}U_{0,j}\|_{L_t^qL_x^r(|x|^{-r\gam})}\right)^2 \sum_{j=1}^J \|\nabla e^{it\Delta} U_{0,j}\|_{L_t^qL_x^r(|x|^{-r\gam})}^2\\
&\le C \left( \sup_{1\le j \le J} \|U_{0,j}\|_{\h}\right)^2 \sum_{j=1}^J \|  U_{0,j}\|_{\h}^2\\
&\le C \left( \sup_{1\le j \le J} \|U_{0,j}\|_{\h}\right)^2 \limsup_{n\to\infty}\|u_{0,n}\|_{\h}^2\\
&\le CA^2 \left( \sup_{1\le j \le J} \|U_{0,j}\|_{\h}\right)^2
\end{align*}
Hence, $ \frac{\delta_0^4}{2CA^2} < \|U_{0,j_0}\|_{\h}^2$ for some $j_0 \in \{1, \cdots, J\}$. Relabeling $U_{0,j_0}$ to $U_{0,1}$, we get the desired result.
\end{proof}

The proof of $(i)-(iii)$ of Theorem \ref{profile-thm} can be immediately reduced to the one of $L^2$-version profile decomposition, Proposition \ref{profile} below, by defining $v_{0, n} = |\nabla|u_{0, n}$, $U_{0,j} = |\nabla|^{-1}V_{0,j}$, and $W_n^J = |\nabla|^{-1}w_n^J$.
\begin{prop}\label{profile}
	Let $\{v_{0, n}\} \subset L_{rad}^2$ with $\|v_{0, n}\|_{L_x^2} \le A$. Then up to a subsequence $(\mbox{still called}\; \{v_{0, n}\})$ for any $J \ge 1$ there exists a sequence $\{V_{0,j}\}_{1 \le j \le J}$ and $w_n^J$ in $L_{rad}^2$ and a family of parameters $(\ljn,\tjn)\in \mathbb{R}^+\times\mathbb{R}$ with
	$$
	\frac{\ljn}{\lam_{j',n}} + \frac{\lam_{j',n}}{\ljn} +\frac{|\tjn-t_{j',n}|}{\ljn^2} \xrightarrow{n\to\infty} \infty \quad  j\neq j'
	$$
	such that
	\begin{enumerate}
		%\item[$(i)$]   $\|V_{0,1}\|_{\h}\ge \alpha_0(A) > 0$,
		\item[$(i)$]  $v_{0,n}=\sum_{j=1}^{J} \ljn^{-\frac32}V_j^l\left(-\frac{\tjn}{\ljn^2}, \frac{x}{\ljn}\right) + w_n^J, \quad V_j^l(t, x) = [e^{it\Delta}V_{0, j}](x)$,
		\item[$(ii)$] $\lim\limits_{J \to \infty}\underset{n\to\infty}{\limsup}\left\|e^{it\Delta}w_n^J\right\|_{L_t^qL_x^r(|x|^{-r\gam})} = 0$ for any $\gam$-admissible pair $(q, r)$ with $2 < q < \infty$,
		\item[$(iii)$]  $\|v_{0,n}\|_{L_x^2}^2 = \sum_{j=1}^{J}\|V_{0, j}\|_{L_x^2}^2 + \|w_n^J\|_{L_x^2}^2 + o(1)$ as $n \to \infty$,
		%\item[$(iv)$]   $E_g(v_{0,n}) = \sum_{j=1}^{J}E_g\left(\ljn^{-\frac12}V_j^l(-\frac{\tjn}{\ljn^2}, \frac{\cdot}{\ljn})\right) + E_g(w_n^J)+o(1)$ as $n \to \infty$.
	\end{enumerate}
\end{prop}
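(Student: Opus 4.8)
The plan is to run the standard profile decomposition machinery of Keraani / Bahouri--Gérard, adapted to the radial $L^2$ setting and to the weighted Strichartz pairs. The main point is that the dispersive inequality underlying $(ii)$ must be a weighted one, so I first establish the correct ``small if weakly null'' lemma: if $\{h_n\}\subset L^2_{rad}$ is bounded and $\lambda_n^{1/2} h_n(\lambda_n \cdot)$ converges weakly to $0$ after every rescaling $\lambda_n$, then $\|e^{it\Delta}h_n\|_{L_t^qL_x^r(|x|^{-r\gam})}\to 0$ for every $\gam$-admissible $(q,r)$ with $2<q<\infty$. This is the radial analogue of the $L^2\to L^{2}_{t,x}$ (or $L^\infty$) improvement; in the radial case it can be obtained from the weighted Strichartz estimate of Lemma \ref{stri-w} together with the radial Sobolev/Strauss embedding, which trades the $|x|^{-\gam}$ weight for local compactness (Rellich) after a dyadic spatial decomposition. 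I will isolate this as a lemma, call it Lemma \ref{limsup}-type, since it is also invoked in the proof of $(v)$.

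Next I set up the inductive extraction. Given the bounded sequence $\{v_{0,n}\}$, define
$$
\nu_1 := \limsup_{n\to\infty}\ \sup_{\lambda>0,\ t\in\mathbb R}\ \big\| \lambda^{3/2}\,(e^{it\Delta}v_{0,n})(\lambda\,\cdot)\big\|_{X},
$$
where $X$ is a fixed weighted norm in which the contrapositive of the lemma above applies (e.g. the endpoint/near-endpoint $L_t^qL_x^r(|x|^{-r\gam})$ pair). If $\nu_1=0$ there is nothing to extract and $w_n^0=v_{0,n}$ already satisfies $(ii)$. Otherwise, after passing to a subsequence, choose scales $\lambda_{1,n}$ and times $t_{1,n}$ nearly realizing the supremum, and let $V_{0,1}$ be the weak $L^2$ limit of $\lambda_{1,n}^{3/2}(e^{-i t_{1,n}\lambda_{1,n}^{-2}\Delta}v_{0,n})(\lambda_{1,n}\,\cdot)$; the near-maximality forces $\|V_{0,1}\|_{L^2}\gtrsim \nu_1>0$. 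Set $w_n^1 := v_{0,n} - \lambda_{1,n}^{-3/2}V_1^l(-t_{1,n}\lambda_{1,n}^{-2}, \cdot/\lambda_{1,n})$. The weak convergence gives the Pythagorean splitting $\|v_{0,n}\|_{L^2}^2 = \|V_{0,1}\|_{L^2}^2 + \|w_n^1\|_{L^2}^2 + o(1)$, which is $(iii)$ for $J=1$, and radiality of $V_{0,1}$ follows because each $v_{0,n}$ is radial. Iterating, at step $J$ we extract $(V_{0,J},\lambda_{J,n},t_{J,n})$ from $w_n^{J-1}$; the orthogonality of parameters
$$
\frac{\lambda_{j,n}}{\lambda_{j',n}}+\frac{\lambda_{j',n}}{\lambda_{j,n}}+\frac{|t_{j,n}-t_{j',n}|}{\lambda_{j,n}^2}\to\infty
$$
is the usual consequence of the extraction being done relative to the already-removed profiles (if two parameter sets stayed comparable, the later profile would have been absorbed into the earlier weak limit, contradicting $\|V_{0,j}\|_{L^2}\gtrsim\nu_j$). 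The Pythagorean identity $(iii)$ at level $J$ follows by induction from the step-$J$ splitting, and since $\sum_j\|V_{0,j}\|_{L^2}^2\le A^2<\infty$ we get $\nu_J\to 0$, i.e. $\sup_{\lambda,t}\|\lambda^{3/2}(e^{it\Delta}w_n^{J})(\lambda\cdot)\|_X\to 0$ as $J\to\infty$. Feeding this back into the weighted dispersive lemma yields $(ii)$. Finally a standard diagonal argument over $J$ produces a single subsequence of $\{v_{0,n}\}$ working for all $J$, and $(i)$ holds by construction with the linear-solution normalization $V_j^l=e^{it\Delta}V_{0,j}$.

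I expect the main obstacle to be the weighted dispersive lemma, i.e.\ proving that vanishing of all rescaled weak limits forces $\|e^{it\Delta}h_n\|_{L_t^qL_x^r(|x|^{-r\gam})}\to 0$. In the classical unweighted critical setting one leans on a refined Strichartz inequality (Littlewood--Paley plus bilinear/dispersive estimates). Here the weight $|x|^{-r\gam}$ and the requirement $2<q<\infty$ (strictly, so the endpoint is avoided) mean I must instead exploit radial symmetry: decompose $\mathbb R^3$ into dyadic annuli $|x|\sim 2^k$, use Strauss' pointwise bound $|h_n(x)|\lesssim |x|^{-1}\|h_n\|_{\dot H^1}$ — or rather the $L^2$-level radial Sobolev gain — to control the weighted norm on each annulus by an unweighted Strichartz norm on a frequency/space-localized piece, and then invoke compactness (Rellich--Kondrachov after localization, or the improved Strichartz of Shao / Killip--Visan--Zhang for radial data) to conclude that the localized pieces vanish when all weak limits do. Care is needed that the implied constants are summable in $k$, which is where the admissibility relation $\frac2q=3(\frac12-\frac1r)+\gam$ and the strict inequalities on $q$ enter. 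Everything else — the weak-limit extraction, the orthogonality of scales, the Pythagorean expansion, and the diagonalization — is routine and follows the template of \cite{chhwkwle}.
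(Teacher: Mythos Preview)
Your iterative weak-limit extraction and the Pythagorean bookkeeping are fine, and the overall scaffolding matches the standard template. But the argument diverges from the paper at the one nontrivial point, and the divergence is a genuine gap rather than an alternative route.

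The paper does \emph{not} prove a ``small if weakly null'' lemma directly. Instead it first establishes a refined weighted Strichartz estimate (Proposition~\ref{prop-l2}):
\[
\|e^{it\Delta}f\|_{L_t^qL_x^r(|x|^{-r\gam})}\lesssim \Big(\sup_k 2^{3k(\frac12-\frac1p)}\|\widehat{P_kf}\|_{L_\xi^p}\Big)^{\alpha}\|f\|_{L_x^2}^{1-\alpha},
\]
obtained by interpolating the radial weighted Strichartz range of Lemma~\ref{refined} with a frequency-localized $L^\infty$ bound. This Besov-type supremum is what drives the extraction: Lemma~\ref{decom-space} peels off scales $\kappa_{j,n}$ from the dyadic pieces where the supremum is nearly attained, and Lemma~\ref{decom-time} then extracts time shifts within each scale by a weak-limit argument in which the remainder is controlled via $\|e^{it\Delta}F_n^M\|_{L^\infty_{t,x}}$ (this is where radiality is used, to bound the spatial concentration point). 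The smallness of the final remainder in the weighted Strichartz norm is then a consequence of the refined estimate, not of a compactness lemma.

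Your proposed substitute --- a physical-space dyadic decomposition plus Strauss' bound plus Rellich --- does not do this job. Strauss' inequality $|f(x)|\lesssim |x|^{-1}\|f\|_{\dot H^1}$ lives at the $\dot H^1$ level, not $L^2$, so it cannot control an $L^2$-scaled sequence; and a purely spatial cut $|x|\sim 2^k$ does not interact well with the spacetime weighted norm (the weight $|x|^{-\gam}$ is already accounted for in the admissibility relation, so there is no extra decay to sum in $k$). What you actually need to make the step ``$\nu_J\to 0\Rightarrow$ weighted Strichartz norm of $w_n^J\to 0$'' work is precisely a frequency-side refinement of the weighted Strichartz inequality --- i.e.\ Proposition~\ref{prop-l2}. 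Once you have that, the extraction naturally becomes a two-stage scale/time procedure as in Lemmas~\ref{decom-space}--\ref{decom-time}, rather than a single sup over $(\lambda,t)$. (Minor point: your rescaling factor should be $\lambda_n^{3/2}$, not $\lambda_n^{1/2}$, at the $L^2$ level in $\mathbb R^3$.)
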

The next three subsections are devoted to showing Proposition \ref{profile} whose proof is divided into three steps: refined Strichartz estimate, prerequisite decompositions, proof of $L^2$-profile decomposition.
\subsection{Refined Strichartz estimate}
We first consider a refined Strichartz estimate.
\begin{prop}\label{prop-l2}
	Let $(q,r)$ is $\gam$-{\it admissible} and $q,r > 2$. Then there exist $\alpha,\,p$  such that $0 < \alpha < 1,\, 1 \le p < 2$, and
	\begin{align}\label{esti-paley}
	\|e^{it\Delta}f\|_{L_t^qL_x^r(|x|^{-r\gam})} \les \left( \sup_k 2^{3k\left(\frac12-\frac1{p}\right)}\|\widehat{P_kf}\|_{L_\xi^p}\right)^{\alpha}\|f\|_{L_x^2}^{1-\alpha}.
	\end{align}
\end{prop}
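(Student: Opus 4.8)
The plan is to prove \eqref{esti-paley} by a bilinear, refined-Strichartz argument in the spirit of Moyua--Vargas--Vega, Bourgain and B\'egout--Vargas, adapted to the weighted norm $L_t^qL_x^r(|x|^{-r\gam})$ and to radial data following \cite{chhwkwle}. Decompose $f=\sum_{k\in\mathbb Z}P_kf$ into Littlewood--Paley pieces supported on frequency annuli $\{|\xi|\sim2^k\}$; each $P_kf$ is again radial. Expanding the square and using the triangle inequality in $L_t^{q/2}L_x^{r/2}$, one has
\begin{align*}
\|e^{it\Delta}f\|_{L_t^qL_x^r(|x|^{-r\gam})}^2=\Big\|\sum_{k_1,k_2}(e^{it\Delta}P_{k_1}f)\,\overline{e^{it\Delta}P_{k_2}f}\Big\|_{L_t^{q/2}L_x^{r/2}(|x|^{-r\gam})}\les\sum_{k_1\le k_2}\big\|(e^{it\Delta}P_{k_1}f)(e^{it\Delta}P_{k_2}f)\big\|_{L_t^{q/2}L_x^{r/2}(|x|^{-r\gam})},
\end{align*}
so it suffices to estimate the bilinear sum, treating the near-diagonal band $|k_1-k_2|\les1$ and the off-diagonal part $k_1\ll k_2$ separately.

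The heart of the matter is a weighted bilinear estimate with a quantitative gain in the frequency gap: there is $\delta>0$ so that for $k_1\le k_2$ and radial $f,g$,
\begin{align*}
\big\|(e^{it\Delta}P_{k_1}f)(e^{it\Delta}P_{k_2}g)\big\|_{L_t^{q/2}L_x^{r/2}(|x|^{-r\gam})}\les 2^{-\delta(k_2-k_1)}\|P_{k_1}f\|_{L_x^2}\|P_{k_2}g\|_{L_x^2}.
\end{align*}
I would obtain this by interpolating two bounds for the same quantity, the product being localized in space-time frequency at scale $2^{k_2}$. The first is the gainless bound $\|P_{k_1}f\|_{L_x^2}\|P_{k_2}g\|_{L_x^2}$, produced by splitting the weight $|x|^{-2\gam}$ evenly between the two factors via H\"older in $(t,x)$ and applying the weighted Strichartz estimate of Lemma~\ref{stri-w} to each. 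The second is a bilinear estimate that genuinely decays in $k_2-k_1$ -- a Bourgain--type bilinear $L^2_{t,x}$ Strichartz estimate for the two (concentric) dyadic pieces; for radial data one can make this effective by exploiting the improved radial dispersive behaviour, for instance passing through $v_k(t,x):=|x|\,e^{it\Delta}P_kf$, which after odd reflection in $|x|$ solves a one--dimensional Schr\"odinger equation with datum at frequency $\sim2^k$, and combining the classical one--dimensional bilinear Strichartz estimate (carrying the explicit $2^{-k_{\max}/2}$ factor) with the one--dimensional Hardy/weighted--Strichartz bound underlying \cite{leeseo}. Since $q,r>2$, the target exponents $q/2,r/2$ sit strictly above the relevant bilinear threshold, so a genuine power $2^{-\delta(k_2-k_1)}$ survives the interpolation.

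To produce the $L_\xi^p$ size term in \eqref{esti-paley} I would take a further geometric mean of the bilinear estimate above with the elementary bound
\begin{align*}
\|e^{it\Delta}P_kf\|_{L_t^\infty L_x^\infty}\les\|\widehat{P_kf}\|_{L_\xi^1}\les 2^{3k(1-\frac1p)}\|\widehat{P_kf}\|_{L_\xi^p}
\end{align*}
(H\"older on $\{|\xi|\sim2^k\}$), the contribution of the region near $x=0$ where the weight is unbounded being absorbed separately by Hardy's inequality together with $r\gam<3$ (which holds for the pairs in \eqref{adm-pair}). With the interpolation parameter $\alpha\in(0,1)$ chosen small enough that the frequency gain is only slightly eroded, this yields, for $k_1\le k_2$,
\begin{align*}
\big\|(e^{it\Delta}P_{k_1}f)(e^{it\Delta}P_{k_2}f)\big\|_{L_t^{q/2}L_x^{r/2}(|x|^{-r\gam})}\les 2^{-\delta'(k_2-k_1)}M^{2\alpha}\|P_{k_1}f\|_{L_x^2}^{1-\alpha}\|P_{k_2}f\|_{L_x^2}^{1-\alpha},\qquad M:=\sup_k2^{3k(\frac12-\frac1p)}\|\widehat{P_kf}\|_{L_\xi^p}.
\end{align*}
Summing over $k_1\le k_2$ using the geometric decay $2^{-\delta'(k_2-k_1)}$ and the Plancherel orthogonality $\sum_k\|P_kf\|_{L_x^2}^2=\|f\|_{L_x^2}^2$, as in the summation lemmas of the refined-Strichartz literature, gives $\|e^{it\Delta}f\|_{L_t^qL_x^r(|x|^{-r\gam})}^2\les M^{2\alpha}\|f\|_{L_x^2}^{2(1-\alpha)}$, which is \eqref{esti-paley}; any $1\le p<2$ sufficiently close to $2$ is admissible, with a corresponding $0<\alpha<1$.

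I expect the main obstacle to be the weighted bilinear estimate with a quantitative frequency-gap gain in the presence of the singular weight $|x|^{-r\gam}$: unlike the unweighted case one cannot simply invoke Tao's bilinear restriction theorem, so exploiting the radial structure -- the reduction $v_k=|x|e^{it\Delta}P_kf$ to one dimension and the one--dimensional weighted Strichartz theory of \cite{leeseo} -- seems necessary, and one must keep checking that the weight constraints ($r\gam<3$, Hardy's inequality for functions vanishing at the origin) remain compatible with \eqref{adm-pair}. A secondary, more technical point is organizing the final summation so that it genuinely reproduces the clean power $\|f\|_{L_x^2}^{1-\alpha}$ rather than a Besov-type norm of $f$, which is where the hypothesis $q,r>2$ is used.
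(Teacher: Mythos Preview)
Your bilinear approach is genuinely different from the paper's and, as written, has a real gap in the final summation. After interpolating you arrive at
\[
\big\|(e^{it\Delta}P_{k_1}f)(e^{it\Delta}P_{k_2}f)\big\|_{L_t^{q/2}L_x^{r/2}(|x|^{-r\gam})}\les 2^{-\delta'(k_2-k_1)}M^{2\alpha}\|P_{k_1}f\|_{L_x^2}^{1-\alpha}\|P_{k_2}f\|_{L_x^2}^{1-\alpha},
\]
and Schur/Young over $k_1\le k_2$ yields only $M^{2\alpha}\sum_k\|P_kf\|_{L_x^2}^{2(1-\alpha)}$. For $\alpha>0$ this is an $\ell^{2(1-\alpha)}$ Besov quantity with exponent \emph{below} $2$, and it is \emph{not} controlled by $\|f\|_{L_x^2}^{2(1-\alpha)}$: the embedding $\ell^{2(1-\alpha)}\hookrightarrow\ell^2$ goes the wrong way. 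You flag this as a ``secondary, more technical point'' and invoke $q,r>2$, but the bilinear scheme as you have organized it never produces an $\ell^{q_*}$ sum with $q_*>2$; it only produces $\ell^{2(1-\alpha)}$. So the argument does not close without a substantially different interpolation, and together with the unproven weighted bilinear gain (your acknowledged main obstacle) the proposal is a plan with two open problems rather than a proof.

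The paper bypasses both obstacles with a short, purely \emph{linear} argument. Using the enlarged radial Strichartz range of Lemma~\ref{refined} (obtained by interpolating the radial Strichartz estimates of \cite{chsl} with the Kato--Yajima smoothing bound of \cite{kato}), it proves two Besov endpoint bounds for $\|e^{it\Delta}f\|_{L_t^qL_x^r(|x|^{-r\gam})}$: one with $\ell^2$ outer summation but $L_\xi^{p_0}$, $p_0<2$, on each annulus (by interpolating the frequency--localized radial weighted Strichartz against the trivial $\|e^{it\Delta}P_0f\|_{L^\infty_{t,x}}\les\|\widehat{P_0f}\|_{L^1_\xi}$), and one with $\ell^{q_0}$ outer summation, $q_0>2$, but $L_\xi^2$ on each annulus (by interpolating against the $L^4_{t,x}$ Besov bound (2.10) of \cite{chhwkwle}). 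Interpolating the three linear bounds gives an $\ell^{q_*}$--$L_\xi^{p_*}$ estimate with $q_*>2$ and $p_*<2$; then the elementary inequality
\[
\Big(\sum_k b_k^{q_*}\Big)^{1/q_*}\le\Big(\sup_k b_k\Big)^{1-2/q_*}\Big(\sum_k b_k^2\Big)^{1/q_*},
\]
combined with H\"older on each annulus (so that $b_k\les\|P_kf\|_{L_x^2}$ and hence $\sum_k b_k^2\les\|f\|_{L_x^2}^2$), delivers exactly $M^\alpha\|f\|_{L_x^2}^{1-\alpha}$. No bilinear estimate is needed, and the summation issue disappears because the Besov exponent $q_*$ lies on the correct side of $2$ from the outset.
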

Here $P_k$ is the Littlewood-Paley projection on the annulus $\{|\xi| \sim 2^k\}$, $k \in \mathbb Z$. For the proof we need to improve the weighted Strichartz estimates for radial data.
%\begin{lem}\cite{bergh}\label{interpolation}
%	Let $0 < \theta < 1$. Then
%	$$
%	\left[L^{r_1}(|x|^{\alpha_1})\,,L^{r_2}(|x|^{\alpha_2})\right]_{\theta} = L^r(|x|^\alpha)
%	$$
%	for $r,\alpha$ satisfied
%	$$
%	\frac1r = \frac{1-\theta}{r_1} + \frac{\theta}{r_2} \;\;\mbox{and}\;\; \alpha = r\left( (1-\theta) \frac{\alpha_1}{r_1} + \theta\frac{\alpha_2}{r_2}\right)
%	$$
%	Here $[\cdot,\cdot]_{\theta}$ is complex interpolation symbols. In particular, the following interpolation holds:
%	$$
%	\left[L_t^{q_1}L_x^{r_1}(|x|^{\alpha_1})\,,L_t^{q_2}L_x^{r_2}(|x|^{\alpha_2})\right]_{\theta} = L_t^qL_x^r(|x|^\alpha)
%	$$
%	where $\frac1q = \frac{1-\theta}{q_1} + \frac{\theta}{q_2}$.
%\end{lem}

\begin{lem}\label{refined}
 Let $0 < \gam < 1$ and $(q,r)$ satisfy that $2 \le q, r \le \infty$ and
 $$
 3\left( \frac12 - \frac1r\right) +\gam \le \frac2q \le 5(\frac12 - \frac1r) + \gam,\;\;(q,r) \neq \left(2,\frac{10}{3+2\gam}\right).
 $$
 Then for any  $f \in L_{rad}^2$ we have
	\begin{align}\label{refined-l2}
	\|e^{it\Delta}f\|_{L_t^qL_x^r(|x|^{-r\gamma})} \les \|f\|_{L_x^2}
	\end{align}
and
\begin{align}\label{li}
	\|e^{it\Delta}f\|_{L_t^qL_x^r(|x|^{-r\gam})} \les \left( \sum_{k \in \mathbb{Z}} \left(  2^{k\sigma}\|P_k f\|_{L_x^2}\right)^2\right)^{\frac12}
	\end{align}
	where $\sigma= 3\left( \frac12 - \frac1r\right) +\gam - \frac2q$.
\end{lem}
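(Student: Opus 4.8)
The plan is to prove the Besov-type estimate \eqref{li} and to read off \eqref{refined-l2} from it; the whole matter then collapses to a single Littlewood--Paley block. Write $f=\sum_kP_kf$ with $P_k$ the projection to $\{|\xi|\sim2^k\}$. Since the hypothesis $3(\tfrac12-\tfrac1r)+\gam\le\tfrac2q\le1$ (the upper bound coming from $q\ge2$) forces $r\gam<3$, the weight $|x|^{-r\gam}$ lies in $A_r(\rt)$, so the vector-valued Littlewood--Paley inequality holds in $L_x^r(|x|^{-r\gam})$; combined with Minkowski's integral inequality in the $t$ and $x$ variables (valid as $q,r\ge2$) this gives
\begin{align*}
\|e^{it\Delta}f\|_{L_t^qL_x^r(|x|^{-r\gam})}\les\Big(\sum_k\|e^{it\Delta}P_kf\|_{L_t^qL_x^r(|x|^{-r\gam})}^2\Big)^{1/2}.
\end{align*}
An exact computation gives $\|e^{it\Delta}h_\lambda\|_{L_t^qL_x^r(|x|^{-r\gam})}=\lambda^{\sigma}\|e^{it\Delta}h\|_{L_t^qL_x^r(|x|^{-r\gam})}$ for $h_\lambda(x)=\lambda^{3/2}h(\lambda x)$, so the required block bound $\|e^{it\Delta}P_kf\|_{L_t^qL_x^r(|x|^{-r\gam})}\les2^{k\sigma}\|P_kf\|_{L_x^2}$ reduces to the single frequency-localized estimate
\begin{align*}
\|e^{it\Delta}g\|_{L_t^qL_x^r(|x|^{-r\gam})}\les\|g\|_{L_x^2},\qquad g\ \text{radial},\ \operatorname{supp}\widehat g\subset\{|\xi|\sim1\}.
\end{align*}
On the scaling line $\sigma=0$ one has $\sum_k\|P_kf\|_{L_x^2}^2\sim\|f\|_{L_x^2}^2$, so \eqref{li} becomes \eqref{refined-l2}; this $\sigma=0$ case of \eqref{refined-l2} is moreover already contained in Lemma \ref{stri-w} and its $q=2$ endpoint \cite{kt}.

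For the frequency-localized estimate I would exploit radiality through the classical conjugation of the radial Laplacian on $\rt$ to the one-dimensional Laplacian on the half-line: for radial $g$ set $h(\rho):=\rho\,g(\rho)$ and extend $h$ oddly to $\rho\in\mathbb R$, so that $\rho\,(e^{it\Delta}g)(\rho)=(e^{it\partial_\rho^2}h)(\rho)$ for $\rho>0$ and $\|g\|_{L^2(\rt)}\sim\|h\|_{L^2(\mathbb R)}$. Passing to polar coordinates turns the block estimate into the frequency-localized weighted $1$-D Strichartz bound
\begin{align*}
\bigl\| |\rho|^{\frac2r-1-\gam}\,e^{it\partial_\rho^2}h\bigr\|_{L_t^qL_\rho^r(\mathbb R)}\les\|h\|_{L^2(\mathbb R)},\qquad h\ \text{odd},\ \operatorname{supp}\widehat h\subset\{|\rho|\sim1\}.
\end{align*}
The weight exponent $\mu:=\tfrac2r-1-\gam$ is negative, but its singularity at the origin is compensated because $h$ odd forces $(e^{it\partial_\rho^2}h)(\rho)=O(|\rho|)$ there --- and the constraint $r\gam<3$ is exactly what makes $|\rho|^{\mu r}|e^{it\partial_\rho^2}h|^r$ integrable near $\rho=0$; away from the origin the weight is harmless, and since $h$ is frequency-localized the off-scaling mismatch (the case $\sigma\neq0$) is absorbed by Bernstein's inequality. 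One then runs a $TT^*$ argument from the $1$-D dispersive bound $\|e^{it\partial_\rho^2}P_0\|_{L^1\to L^\infty}\les\langle t\rangle^{-1/2}$, the $L^2$-isometry, and a weighted Hardy--Littlewood--Sobolev (Stein--Weiss) inequality for the kernel $|t-s|^{-1/2}$ against the power weights. (Equivalently one could decompose $g$ into spherical harmonics and use Hankel-transform estimates, as in \cite{chhwkwle}, but in $\rt$ the one-dimensional reduction is the most economical route.)

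Finally, the two extreme pairs in the range must be treated by hand. On the scaling line the estimate is Lemma \ref{stri-w}. At the other corner $(q,r)=(2,\tfrac{10}{3+2\gam})$, $q=2$ is the Keel--Tao endpoint of the $TT^*$ bilinear form, so for radial data only the Lorentz-refined bound $\|e^{it\Delta}g\|_{L_t^2L_x^{r,2}(|x|^{-r\gam})}\les\|g\|_{L^2}$ survives --- which is precisely why that corner is excluded --- and the strong bounds for the remaining $(q,r)$ are then obtained by bilinear real interpolation between the scaling-line estimates and this weak corner estimate. I expect the one genuinely delicate point to be this block estimate at and near the excluded corner: proving the sharp weighted $1$-D endpoint while controlling the behaviour at $x=0$, and organizing the Keel--Tao argument so that $(2,\tfrac{10}{3+2\gam})$ is the \emph{only} failure.
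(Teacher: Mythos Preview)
Your strategy is plausible but takes a genuinely different route from the paper, and in the opposite order. The paper proves \eqref{refined-l2} first and then obtains \eqref{li} from it by a two-line scaling argument: apply \eqref{refined-l2} to $P_0f$, rescale to frequency $2^k$ to pick up the factor $2^{k\sigma}$, and square-sum.

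For \eqref{refined-l2} itself the paper does no $TT^*$ and no one-dimensional reduction. It simply complex-interpolates two off-the-shelf estimates: the extended-range radial Strichartz estimate of Cho--Lee \cite{chsl}, namely $\|e^{it\Delta}f\|_{L_t^aL_x^b}\lesssim\|f\|_{L_x^2}$ for radial $f$ and $(a,b)$ in the region $3(\tfrac12-\tfrac1b)\le\tfrac2a\le5(\tfrac12-\tfrac1b)$, $(a,b)\neq(2,\tfrac{10}3)$, and the Kato--Yajima smoothing estimate $\|e^{it\Delta}f\|_{L_{t,x}^2(|x|^{-2})}\lesssim\|f\|_{L_x^2}$ from \cite{kato}. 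Interpolating at parameter $\gam$ sends the Cho--Lee region onto exactly the $(q,r)$-range in the lemma, and the excluded corner $(2,\tfrac{10}3)$ to $(2,\tfrac{10}{3+2\gam})$.

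What you outline --- the conjugation $h(\rho)=\rho\,g(\rho)$ to the half-line, a frequency-localized weighted $1$-D Strichartz via $TT^*$ and Stein--Weiss, and a Keel--Tao-style treatment of the endpoint --- is effectively a from-scratch proof of the Cho--Lee result carried out directly in the weighted setting. It should go through and has the virtue of being self-contained, but the ``genuinely delicate point'' you flag near the excluded corner is precisely the hard work already packaged into \cite{chsl}; the paper just cites it and interpolates, which is far shorter. Your $A_r$-weight/Littlewood--Paley step is also unnecessary in the paper's arrangement, since summing the rescaled single-block bound in $\ell^2$ already gives \eqref{li} without appealing to weighted square-function theory.
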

\begin{proof}[Proof of Lemma \ref{refined}]
By Strichartz estimates for the radial data \cite{chsl}, we have
$$
\|e^{it\Delta}f\|_{L_t^aL_x^b} \les \|f\|_{L_x^2}
$$
for $(a,b)$ such that $3 \left( \frac12 - \frac1b \right) \le \frac2a \le 5 \left( \frac12 - \frac1b \right)$ and $(a,b) \neq \left( 2,\frac{10}{3} \right)$. We also have $L^2$-weighted estimate of \cite{kato} such that
$$
\|e^{it\Delta}f\|_{L_{t,x}^2(|x|^{-2})} \les \|f\|_{L_x^2}.
$$
Then complex interpolation of weighted spaces (for instance see \cite{bergh}) yields
$$
\|e^{it\Delta}f\|_{L_t^qL_x^r(|x|^{-r\gam})} \les \|f\|_{L_x^2},
$$
where
\begin{align*}
\frac1q = \frac{1-\gam}a + \frac\gam2,\;\; \frac1r = \frac{1-\gam}b + \frac\gam2,\;\; 0 < \gam < 1.
\end{align*}
This implies \eqref{refined-l2}.

	By \eqref{refined-l2} we get
	$$
	\|e^{it\Delta}P_0f\|_{L_t^qL_x^r(|x|^{-r\gam})} \les \|f\|_{L_x^2}.
	$$
	Let $g = P_{k-1}f + P_k f + P_{k+1}f$. Then the above inequality yields that
	\begin{align*}%\label{scal}
	\|e^{it\Delta}P_kf\|_{L_t^qL_x^r(|x|^{-r\gam})} &= \||x|^{-\gam}e^{it\Delta}P_k g\|_{L_t^qL_x^r}\\
	&= 2^{3k}\left\||x|^{-\gam}e^{i(2^{2k}t)\Delta}P_0\left[2^{-3k}g\left(\frac{\cdot}{2^k}\right)\right](2^k x)\right\|_{L_t^qL_x^r}\\
	&= 2^{3k  - \frac{3k}{r} +k\gam-\frac{2k}{q}}\left\|e^{it\Delta}P_0\left[2^{-3k}g\left(\frac{\cdot}{2^k}\right)\right]( x)\right\|_{L_t^qL_x^r(|x|^{-r\gam})}\\
	&\les 2^{3k  - \frac{3k}{r} +k\gam-\frac{2k}{q}}\left\|2^{-3k}g\left(\frac{\cdot}{2^k}\right)\right\|_{L_x^2}\\
	&\les 2^{k\left(  3(\frac12 - \frac1r) +\gam -\frac2q \right)}\left\|P_kf\right\|_{L_x^2}.
	\end{align*}
	This completes the proof of Lemma \ref{refined}.

\end{proof}

Now we prove the refined Strichartz estimate, Proposition \ref{prop-l2}.

\begin{proof}[Proof of Proposition \ref{prop-l2}]
	From Lemma \ref{stri-w} and Littlewood-Paley theory we get
	\begin{align}\label{esti-ltwo}
	\|e^{it\Delta}f\|_{L_t^qL_x^r(|x|^{-r\gam})} \les \|f\|_{L_x^2} \sim \left(\sum_k \|\widehat{P_kf}\|_{L_\xi^2}^2\right)^{\frac12}.
	\end{align}
	We will show the following estimates later.
	\begin{align}
	\|e^{it\Delta}f\|_{L_t^qL_x^r(|x|^{-r\gam})} &\les \left(\sum_k \left(2^{3k\left(\frac12 - \frac1{p_0}\right)}\|\widehat{P_kf}\|_{L_\xi^{p_0}}\right)^2\right)^{\frac12},\label{esti-space}\\
	\|e^{it\Delta}f\|_{L_t^qL_x^r(|x|^{-r\gam})} &\les \left(\sum_k \|\widehat{P_kf}\|_{L_\xi^2}^{q_0}\right)^{\frac1{q_0}},\label{esti-time}
	\end{align}
	for some $p_0,\,q_0$ with $p_0 < 2 < q_0$. Once these estimates hold, the interpolation of \eqref{esti-ltwo}, \eqref{esti-time}, and \eqref{esti-space} gives us
	$$
	\|e^{it\Delta}f\|_{L_t^qL_x^r(|x|^{-r\gam})} \les \left(\sum_k \left(2^{3k\left(\frac12 - \frac1{p_*}\right)}\|\widehat{P_kf}\|_{L_\xi^{p_*}}\right)^{q_*}\right)^{\frac1{q_*}}
	$$
	for any $(\frac1{q_*},\,\frac1{p_*})$ on triangle with vertices $(\frac12,\,\frac12),\,(\frac12,\frac1{p_0}),$ and $(\frac1{q_0},\frac12)$. This yields that
	\begin{align*}
	\|e^{it\Delta}f\|_{L_t^qL_x^r(|x|^{-r\gam})} &\le \left(\left(\sup_k2^{3k\left(\frac12 - \frac1{p_*}\right)}\|\widehat{P_kf}\|_{L_\xi^{p_*}}\right)^{q_*-2}\sum_k \left(2^{3k\left(\frac12 - \frac1{p_*}\right)}\|\widehat{P_kf}\|_{L_\xi^{p_*}}\right)^{2}\right)^{\frac1{q_*}}\\
	& \le \left(\sup_k2^{3k\left(\frac12 - \frac1{p_*}\right)}\|\widehat{P_kf}\|_{L_\xi^{p_*}}\right)^{\frac{q_*-2}{q_*}} \|f\|_{L_x^2}^{\frac2{q_*}}.
	\end{align*}
	We set $p = p_*$ and $\alpha = 1- \frac2{q_*}$. Then we get \eqref{esti-paley}.
	
	We now show \eqref{esti-space} and \eqref{esti-time}. By Lemma \ref{refined}, we get $\|e^{it\Delta}P_0f\|_{L_t^{\widetilde q}L_x^{\widetilde r}(|x|^{-{\widetilde r}\widetilde{\gam}})} \les \|P_0f\|_{L_x^2}$ for $(\widetilde q, \widetilde r, \widetilde{\gam})$ satisfying the condition of Lemma \ref{refined}.
By Hausdorff-Young inequality we also get $\|e^{it\Delta}P_0f\|_{L_{t,x}^{\infty}} \les \|\widehat{P_0f}\|_{L_\xi^1}$. Thus by complex interpolation we can find $0 < \theta = \theta(q, r, \gam) < 1$ and $(\widetilde q, \widetilde r, \widetilde{\gam})$ for each $(q, r, \gam)$ such that
$$
\frac1{q} = \frac{\theta}{\widetilde q},\;\; \frac1r = \frac\theta{\widetilde r},\;\; \gam = \theta\widetilde{\gam}.
$$
and $\|e^{it\Delta}P_0f\|_{L_t^{q}L_x^{ r}(|x|^{-{r}{\gam}})} \les \|\widehat{P_0f}\|_{L_\xi^{p_0}}$, where $\frac1{p_0} = \frac{1-\theta}2$.
Using \eqref{li}, we get
	$$
	\|e^{it\Delta}P_kf\|_{L_t^{q}L_x^{r}(|x|^{-r\gam})} \les  2^{\frac32k} \left\|e^{it\Delta}\left[P_0 f\left(\frac{\cdot}{2^k}\right)\right]\right\|_{L_t^{q}L_x^{r}(|x|^{-r\gam})} \les  2^{3k\left(\frac12 - \frac1{p_0}\right)}\|\widehat{P_0f}\|_{L_\xi^{p_0}}.
	$$
	 Therefore we have
	$$
	\|e^{it\Delta}f\|_{L_t^{q}L_x^{r}(|x|^{-r\gam})} \le \left( \sum_{k} \|e^{it\Delta}P_k f\|_{L_t^{q}L_x^{r}(|x|^{-r\gam})}^2 \right)^{\frac12}  \les \left( \sum_{k} \left( 2^{3k\left( \frac12 - \frac1{p_0}\right)}\|\widehat{P_k f}\|_{L_\xi^{p_0}}\right)^2 \right)^{\frac12}.
	$$
	
	We now turn to the \eqref{esti-time}. By (2.10) of \cite{chhwkwle}, we get
	\begin{align}\label{lfour}
	\|e^{it\Delta}f\|_{L_{t,x}^4} \les  \left( \sum_{k} \left( 2^{\frac{k}4}\|\widehat{P_k f}\|_{L_\xi^2} \right)^4 \right)^{\frac14}.
	\end{align}
	By the same argument as above for each $(q, r, \gam)$ we can find $\theta$ and $(\widetilde q, \widetilde r, \widetilde{\gam})$ such that
	$$
	\frac1q = \frac\theta4 + \frac2{\widetilde q}(1-\theta) ,\;\;   	\frac1r = \frac\theta4 + \frac2{\widetilde r}(1-\theta).
	$$
	Hence complex interpolation of \eqref{refined-l2} and \eqref{lfour} yields \eqref{esti-time}.
		This completes the proof of Proposition \ref{prop-l2}.
\end{proof}

%%%%%%%%%%%%%%%%%%%%%%%%%%%%%%%%%%%%%%%%%%%%%%%%%%%%%%%%%%%%%%%%%%%%%%%%%%%%%%%%%%%%%%%%%%%%%%%%%%%%%%%%%%%%%%%%%%%%%%%%%%%%%%%%%%%%%%%%%%%%%%%%%%%%%%%%%%%%%%%%%%%%%%%%%%%%%%%%%%%%%%%%%%%%%%%%%%%%

\subsection{Prerequisite decompositions}
At first we consider a decomposition associated with refined Strichartz estimate.
\begin{lem}\label{decom-space}
	Let $\{u_n\}$ be a sequence of complex-valued function and $\|u_n\|_{L_x^2} \le A$. Then for any $\eta >0$, there exists $J=J(\eta)$, $\kappa_{j,n} \in (0,\infty)$ and $\{f_{j,n}\} \subset L_x^2$ such that
	$$
	u_n = \sum_{j=1}^J f_{j,n} + q_n^J
	$$
	with
	\begin{enumerate}
		\item[$(i)$]   there exists compact set $K=K(J) \subset \{ \xi : r_1 <|\xi|<r_2 \}$ satisfying that
		$$
		(\kjn)^\frac32|\widehat{f_{j,n}}(\kjn \xi)|\le C_\eta \chi_{K}(\xi) \quad \mbox{for every} \quad 1 \le j \le J,
		$$
		\item[$(ii)$]  {\rm orthogonality:} $\frac{\kjn}{\kappa_{j',n}} + \frac{\kappa_{j',n}}{\kjn}  \xrightarrow{n\to\infty} 0 \quad  j\neq j'$,
		\item[$(iii)$] $\underset{n\to\infty}{\limsup}\left\| e^{it\Delta}q_n^J\right\|_{L_t^qL_x^r(|x|^{-r\gam})} \le \eta $ for any $J\ge1$,
		\item[$(iv)$]  $\|u_n\|_{L_x^2}^2 = \sum_{j=1}^{J}\|f_{j,n}\|_{L_x^2}^2 + \|q_n^J\|_{L_x^2}^2 + o(1)$ as $n \to \infty$.
	\end{enumerate}
\end{lem}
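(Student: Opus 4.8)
The plan is to extract the pieces $f_{j,n}$ one at a time by an inverse argument driven by the refined Strichartz estimate of Proposition~\ref{prop-l2}, iterating until the remainder becomes small in $L_t^qL_x^r(|x|^{-r\gam})$. Set $q_n^0:=u_n$ and suppose inductively that $q_n^{j-1}$ has been constructed with $\|q_n^{j-1}\|_{L_x^2}\le A$. If $\limsup_{n\to\infty}\|e^{it\Delta}q_n^{j-1}\|_{L_t^qL_x^r(|x|^{-r\gam})}\le\eta$, we stop and put $J:=j-1$. Otherwise we pass to a subsequence along which this quantity exceeds $\eta$ for every $n$; Proposition~\ref{prop-l2} then forces $\sup_{k}2^{3k(\frac12-\frac1p)}\|\widehat{P_kq_n^{j-1}}\|_{L_\xi^p}\ge\eta'$ for some $\eta'=\eta'(\eta,A)>0$, so for each $n$ there is an integer $k_{j,n}$ with $2^{3k_{j,n}(\frac12-\frac1p)}\|\widehat{P_{k_{j,n}}q_n^{j-1}}\|_{L_\xi^p}\ge\tfrac12\eta'$. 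Put $\kappa_{j,n}:=2^{k_{j,n}}$.

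Next I would build $f_{j,n}$ by combining the dyadic frequency-shell localisation at scale $\kappa_{j,n}$ with a pointwise cut of the Fourier transform at the scaling-critical amplitude $\kappa_{j,n}^{-3/2}$, which is precisely what $(i)$ asks for. Let
$$
E_{j,n}:=\bigl\{\xi:\tfrac12\kappa_{j,n}\le|\xi|\le 2\kappa_{j,n}\bigr\}\cap\bigl\{\xi:|\widehat{q_n^{j-1}}(\xi)|\le C_\eta\,\kappa_{j,n}^{-3/2}\bigr\},
$$
and set $\widehat{f_{j,n}}:=\chi_{E_{j,n}}\widehat{q_n^{j-1}}$, $q_n^j:=q_n^{j-1}-f_{j,n}$. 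Then $(\kappa_{j,n})^{3/2}|\widehat{f_{j,n}}(\kappa_{j,n}\,\cdot\,)|\le C_\eta\,\chi_K$ for the fixed annulus $K=\{\tfrac12\le|\xi|\le 2\}$, giving $(i)$. Since $\widehat{q_n^j}=\chi_{E_{j,n}^c}\widehat{q_n^{j-1}}$, an induction yields $\widehat{q_n^j}=\chi_{F_{j,n}}\widehat{u_n}$ with $F_{j,n}=\bigcap_{i\le j}E_{i,n}^c$ nested, so the $\widehat{f_{i,n}}$ have pairwise disjoint supports and $\|u_n\|_{L_x^2}^2=\sum_{i\le j}\|f_{i,n}\|_{L_x^2}^2+\|q_n^j\|_{L_x^2}^2$ exactly; this is $(iv)$ with vanishing error, and it also shows $\|q_n^j\|_{L_x^2}\le A$. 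Finally, Chebyshev bounds the bad set by $|\{\tfrac12\kappa_{j,n}\le|\xi|\le 2\kappa_{j,n},\ |\widehat{q_n^{j-1}}|>C_\eta\kappa_{j,n}^{-3/2}\}|\lesssim(A/C_\eta)^2\kappa_{j,n}^3$, and Hölder (using $p<2$) then shows the discarded tail carries at most $\tfrac14\eta'$ of the scaling-invariant $L_\xi^p$-size once $C_\eta=C_\eta(\eta,A)$ is chosen large; hence $2^{3k_{j,n}(\frac12-\frac1p)}\|\widehat{f_{j,n}}\|_{L_\xi^p}\ge\tfrac14\eta'$, and since $\widehat{f_{j,n}}$ lives on a single dyadic shell (of measure $\sim\kappa_{j,n}^3$) a further application of Hölder gives $\|f_{j,n}\|_{L_x^2}\gtrsim\eta'$. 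Consequently the iteration stops after at most $J\lesssim A^2/(\eta')^2=:J(\eta)$ steps, and $(iii)$ holds for the terminal remainder $q_n^J$ (and trivially for all larger $J$, by appending null profiles).

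It remains to arrange the scale-separation $(ii)$. After the finitely many extractions, pass to one more subsequence along which, for each of the finitely many pairs $j\ne j'$, the integer $k_{j,n}-k_{j',n}$ either tends to $\pm\infty$ or is eventually constant, and partition $\{1,\dots,J\}$ into the classes on which these differences stay bounded. Within a class the corresponding $\widehat{f_{j,n}}$ occupy, with pairwise disjoint supports, a common frequency shell of fixed bounded dyadic width, so their sum is again of the form in $(i)$ at the scale $\kappa$ of the smallest exponent $k$ in the class — enlarging $K$ by a fixed bounded factor, with $C_\eta$ unchanged — and the $L_x^2$ identity is preserved. Replacing each class by its merged profile produces a decomposition whose scales satisfy $\kappa_{j,n}/\kappa_{j',n}+\kappa_{j',n}/\kappa_{j,n}\to\infty$ for $j\ne j'$, i.e.\ $(ii)$; the remainder $q_n^J$ and $(iii)$ are untouched, and after reindexing we obtain the asserted decomposition.

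The step I expect to be the main obstacle is the amplitude truncation: to meet the uniform pointwise Fourier bound in $(i)$ one must cut $\widehat{q_n^{j-1}}$ at the scaling amplitude $\kappa_{j,n}^{-3/2}$, and one has to check that this removes only a negligible portion of the refined-Strichartz lower bound — this is exactly where the interplay of the $L_x^2$ and $L_\xi^p$ norms (Chebyshev together with Hölder, using $p<2$) is needed and where the dependence $C_\eta=C_\eta(\eta,A)$ is generated. Everything else is routine inverse-Strichartz bookkeeping, and the $L_x^2$ almost-orthogonality is in fact exact here because every cutoff is by a characteristic function.
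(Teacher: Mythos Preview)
Your proof is correct and follows essentially the same route as the paper's: extract profiles one at a time via the inverse refined Strichartz estimate (Proposition~\ref{prop-l2}), localise to the significant dyadic shell, truncate the Fourier transform at the scaling-critical amplitude $\kappa_{j,n}^{-3/2}$ (your Chebyshev/H\"older step is exactly the paper's computation $\int_{|\widehat u|>\rho}|\widehat u|^p\le \rho^{p-2}\|u\|_{L^2}^2$ in disguise), deduce a uniform $L_x^2$ lower bound to terminate the iteration, and finally merge profiles with comparable scales to obtain the orthogonality $(ii)$. The only cosmetic differences are that you keep the exact $L_x^2$ Pythagorean identity throughout (the paper also has it exactly before regrouping), and your regrouping argument is spelled out a bit more carefully via a subsequence on which the dyadic gaps are either constant or diverge.
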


\begin{proof}
If $\left\| e^{it\Delta}u_n\right\|_{L_t^qL_x^r(|x|^{-r\gam})} \le \eta$, the proof has been done. Thus we can assume that
$$
\left\| e^{it\Delta}u_n\right\|_{L_t^qL_x^r(|x|^{-r\gam})} > \eta.
$$
By Proposition \ref{prop-l2} there exists an annulus $A_{1,n} = \{ \xi : \frac{\kon}2 < |\xi| < \kon\}$ such that
$$
C_1(\kon)^{3(\frac1{p} -\frac12)} \eta^{\frac1\alpha} \le \|\widehat{u_{1,n}}\|_{L_x^{p}}
$$
where $\widehat{u_{1,n}} = \widehat{u_n}\chi_{A_{1,n}}$. And
$$
\int_{|\widehat{u_{1,n}}| > \rho} |\widehat{u_{1,n}}|^{p}\, d\xi = \int_{|\widehat{u_{1,n}}| > \rho} (\rho^{2-{p}}|\widehat{u_{1,n}}|^{p})\rho^{p-2}\, d\xi \le A\rho^{p-2}
$$
for any $\rho>0$ and hence
$$
\left(\int_{|\widehat{u_{1,n}}| > \rho} |\widehat{u_{1,n}}|^{p}\, d\xi\right)^{\frac1{p}} \le A^\frac1p \rho^{1-\frac2{p}}.
$$
We set $\rho =\left( A^{-\frac1{p}}\frac{C_1}2 (\kon)^{3(\frac1{p}-\frac12)} \eta^{\frac1\alpha}\right)^{\frac{p}{2-p}}$. Then we have
$$
\frac{C_1}2 (\kon)^{3(\frac1{p}-\frac12)} \eta^{\frac1\alpha} \le \left(\int_{|\widehat{u_{1,n}}| \le \rho} |\widehat{u_{1,n}}|^{p}\, d\xi\right)^{\frac1{p}} \le C(\kon)^{3(\frac1{p} -\frac12)}\left(\int_{|\widehat{u_{1,n}}| \le \rho} |\widehat{u_{1,n}}|^2\, d\xi\right)^{\frac12}.
$$
This implies that
$$
\frac{C_1'}2\eta^{\frac1\alpha} \le  \left(\int_{|\widehat{u_{1,n}}| \le \rho} |\widehat{u_{1,n}}|^2\, d\xi\right)^{\frac12}.
$$

Let us define $G_n^1(\psi) = (\kon)^{\frac32}\psi(\kon\cdot)$. Then
$$
\|v_{1,n}\|_{L_x^2} \ge \frac{C_1'}2 \eta^{\frac1\alpha} \;\; \mbox{and} \;\; |G_n^1( \widehat{ v_{1,n}}(\xi) )| = (\kon)^{\frac32}\widehat{ v_{1,n}}(\kon\xi) \le C_\eta \chi_{A}(\xi),
$$
where $\widehat{ v_{1,n}} =  \widehat{u_{1,n}} \chi_{\{\widehat{u_{1,n}} < \rho\}}$ and $ \mathcal A = \{\xi : \frac12 <|\xi|< 1 \}$. If $\left\| u_n -v_{1,n}\right\|_{L_t^qL_x^r(|x|^{-r\gam})} \le \eta$, we stop here, and if $\left\| u_n -v_{1,n}\right\|_{L_t^qL_x^r(|x|^{-r\gam})} > \eta$, we repeat the above process with $u_n -v_{1,n}$. After repeating $J$ times, we get
\begin{align*}
&u_n = \sum_{j=1}^Jv_{j,n} + q_n^J,\\
&\|u_n\|_{L_x^2}^2 = \sum_{j=1}^J\|v_{j,n}\|_{L_x^2}^2  + \|q_n^J\|_{L_x^2}^2,\\
&\|e^{it\Delta}q_n^J\|_{L_t^qL_x^r(|x|^{-r\gam})} \le \eta.
\end{align*}

Now let us consider the property $(iii)$. We say that $\kjn,\kappa_{j',n}$ are orthogonal if and only if $\limsup ( \frac{\kjn}{\kappa_{j',n}} + \frac{\kappa_{j',n}}{\kjn})= \infty$. Let us define $f_{1,n}$ to be the sum of $v_{j,n}$ satisfied $\kjn$ are not orthogonal to $\kon$. Set least $j_0 \in [2,J]$ such that $\kappa_{j_0,n},\,\kon$ are orthogonal. And let us define $f_{2,n}$ to be sum of $v_{j,n}$ satisfied $\kjn$ are not orthogonal to $\kappa_{j_0,n}$ but orthogonal to $\kon$. After repeating finite step, we have $\{f_{j,n}\}$ satisfying that
\begin{align*}
&u_n = \sum_{j=1}^Jf_{j,n} + q_n^J,\\
&\|u_n\|_{L_x^2}^2 = \sum_{j=1}^J\|f_{j,n}\|_{L_x^2}^2  + \|q_n^J\|_{L_x^2}^2\\
&\limsup_{n\to\infty} \left( \frac{\kjn}{\kappa_{j',n}} + \frac{\kappa_{j',n}}{\kjn}\right)= \infty.
\end{align*}

Finally, we need to show $(i)$. Since $v_{j,n}$ collected in $f_{1,n}$ has $\kjn$ which is not orthogonal to $\kon$, we obtain $\limsup( \frac{\kjn}{\kappa_{j',n}} + \frac{\kappa_{j',n}}{\kjn}) < \infty$. We also know that $|G_n^j(\widehat{ v_{1,n}})| \le C_\eta \chi_{\mathcal A}$. Hence by scaling and non-orthogonality, we get $|G_n^j(\widehat{ v_{1,n}})| \le \widetilde{C_\eta}\chi_{\widetilde{A}}$ where $\widetilde{\mathcal A} = \{ \xi : r_1 <|\xi|< r_2\}$ for some $r_1,\,r_2 >0$. The proof has been finished.
\end{proof}

%%%%%%%%%%%%%%%%%%%%%%%%%%%%%%%%%%%%%%%%%%%%%%%%%%%%%%%%%%%%%%%%%%%%%%%%%%%%%%%%%%%%%%%%%%%%%%%%%%%%%%%%%%%%%%%%%%%%%%%%%%%%%%%%%%%%%%%%%%%%%%%%%%%%%%%%%%%%%%%%%%%%%%%%%%%%%%%%%%%%%%%%%%%%%%
We next complement a further decomposition w.r.t. time parameter.
\begin{lem}\label{decom-time}
Let $\{f_n\} \subset L_x^2$ satisfy $(\mu_n)^{\frac32}|\widehat{f_n}(\kappa_n \xi)| \le \widehat{F}(\xi)$ for some $\widehat{F} \in L_\xi^{\infty}(K)$ with compact set $K\subset \mathcal A = \{\xi : 0 <r_1 <|\xi|<r_2\}$. Then there exist $\{\tau_{j,n}\} \subset \mathbb{R}$ and $\{V^l\} \subset L_x^2$ such that
\begin{enumerate}
		\item[$(i)$]  {\rm orthogonality:} $\limsup_{n\to\infty} |\tau_{j,n} - \tau_{j',n}| = \infty \qquad j\neq j'$,
		\item[$(ii)$]  for every $M>0$, there exists $e_n^M \in L_x^2$ such that
$$
f_n(x) = \sum_{l=1}^M (\kappa_n)^{\frac32}\left(e^{i\tau_{j,n}\Delta}V^l\right)(\kappa_n x) + e_n^M(x) \;\; \mbox{and}\;\; \limsup_{M\to\infty, n\to\infty}\|e^{it\Delta}e_n^M\|_{L_t^qL_x^r(|x|^{-r\gam})} =0,
$$
		\item[$(iii)$]  $\|f_n\|_{L_x^2}^2 = \sum_{l=1}^{M}\|V^l\|_{L_x^2}^2 + \|e_n^M\|_{\h}^2 + o(1)$ as $n \to \infty$.
	\end{enumerate}
\end{lem}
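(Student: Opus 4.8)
I would run the standard iterative bubble–extraction scheme in the spirit of Bahouri–G\'erard, Keraani and \cite{chhwkwle}; since the data are already frequency–localized, the only defect of compactness left to remove is the time translation. After the $L^2$–preserving parabolic rescaling attached to $(\kappa_n)$ — replacing $f_n$ by $g_n:=\kappa_n^{-3/2}f_n(\cdot/\kappa_n)$, so that $\widehat{g_n}$ is supported in the fixed compact annulus $K\subset\{r_1<|\xi|<r_2\}$ and $|\widehat{g_n}|\le\widehat F$ — I may assume $\kappa_n\equiv1$. Then $\{g_n\}$ is bounded in $L^2_x$ (by $A':=c\|\widehat F\|_{L^2(K)}$) and, by the frequency localization, in $H^2_x$; moreover $\|e^{it\Delta}g_n\|_{L^q_tL^r_x(|x|^{-r\gam})}=\|e^{it\Delta}f_n\|_{L^q_tL^r_x(|x|^{-r\gam})}$ by the $L^2$–scaling invariance of $\gam$–admissible weighted norms, so all conclusions will transfer back once the rescaling is undone.

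The engine is an inverse weighted Strichartz estimate: \emph{if $\{h_n\}\subset L^2_{rad}$ has $\operatorname{supp}\widehat{h_n}\subset K$, $\|h_n\|_{L^2_x}\le A'$ and $\limsup_n\|e^{it\Delta}h_n\|_{L^q_tL^r_x(|x|^{-r\gam})}\ge\ve$, then along a subsequence there are $\tau_n\in\mathbb{R}$ and $V\in L^2_{rad}\setminus\{0\}$ with $e^{-i\tau_n\Delta}h_n\rightharpoonup V$ in $L^2_x$ and $\|V\|_{L^2_x}\gtrsim_{A',K}\ve^{\beta_0}$ for some $\beta_0>0$ depending only on the exponents.} To prove it I would first localize the weighted norm in space: on $\{|x|>R\}$ it is $\le R^{-\gam}\|e^{it\Delta}h_n\|_{L^q_tL^r_x}\les_K A'$ (using $r<6$, Bernstein, the Keel–Tao endpoint $L^2_tL^6_x$ and $L^\infty_tL^6_x$ from Sobolev), and on $\{|x|<\de\}$ it is $\les\de^{3/r-\gam}\|e^{it\Delta}h_n\|_{L^q_tL^\infty_x}\les_K A'\de^{3/r-\gam}$ with $3/r-\gam>0$ since $\gam<2/r$; choosing $R$ large and $\de$ small (depending on $\ve,A',K$) leaves $\gtrsim\ve$ on the annulus $\{\de<|x|<R\}$, where the weight is $\sim1$. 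Hence $t\mapsto\|e^{it\Delta}h_n(t)\|_{L^\infty_x}$ has $L^q_t$–norm $\gtrsim\ve$, and since it is bounded in $L^2_t$ (via $L^2_tL^6_x$ and Bernstein), a Chebyshev argument produces $t_n$ with $\|e^{it_n\Delta}h_n\|_{L^\infty_x}\gtrsim_{A',K}\ve^{c}$, hence an $x_n$ nearly attaining it. By the radial Sobolev (Strauss) estimate \cite{stra} and the frequency localization, $|e^{it_n\Delta}h_n(x)|\les_{A',K}|x|^{-1}$, so $|x_n|$ stays bounded and, along a further subsequence, $x_n\to x_*$. Finally $\{e^{it_n\Delta}h_n\}$ is radial and bounded in $H^2_x$, so by Rellich/Arzel\`a–Ascoli it converges, locally uniformly and weakly in $L^2_x$, to a radial $V$; the uniform $C^{0,1/2}$–bound then forces $|V|\gtrsim\ve^c$ on a ball of radius $\sim\ve^{2c}$ about $x_*$, whence $\|V\|_{L^2_x}\gtrsim_{A',K}\ve^{4c}$. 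Set $\tau_n:=-t_n$. I expect this inverse estimate to be the main obstacle: the delicate point is to arrange the near-/far-field splitting so that the error terms are genuinely summable, which rests on the precise admissibility exponents; one may alternatively feed in the refined estimate \eqref{esti-paley} of Proposition \ref{prop-l2} and Lemma \ref{refined} at this step.

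Granting this, the iteration is routine concentration–compactness bookkeeping. Put $g_n^0:=g_n$; if $\limsup_n\|e^{it\Delta}g_n^M\|_{L^q_tL^r_x(|x|^{-r\gam})}\to0$ as $M\to\infty$ I stop, otherwise the inverse estimate yields $\tau_{M+1,n}$ and $V^{M+1}\neq0$ with $e^{-i\tau_{M+1,n}\Delta}g_n^M\rightharpoonup V^{M+1}$; set $g_n^{M+1}:=g_n^M-e^{i\tau_{M+1,n}\Delta}V^{M+1}$, which stays radial and frequency–localized in $K$ and has $e^{-i\tau_{M+1,n}\Delta}g_n^{M+1}\rightharpoonup0$. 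Since $\langle e^{i\tau_{M+1,n}\Delta}V^{M+1},g_n^{M+1}\rangle=\langle V^{M+1},e^{-i\tau_{M+1,n}\Delta}g_n^{M+1}\rangle\to0$, one gets $\|g_n^M\|_{L^2_x}^2=\|V^{M+1}\|_{L^2_x}^2+\|g_n^{M+1}\|_{L^2_x}^2+o(1)$ at each step, so (after the usual diagonal extraction) $\sum_l\|V^l\|_{L^2_x}^2\le(A')^2$; in particular $\|V^l\|_{L^2_x}\to0$, hence $\sup\{\|V\|_{L^2_x}:\exists\sigma_n,\ e^{-i\sigma_n\Delta}g_n^M\rightharpoonup V\}\le 2\|V^{M+1}\|_{L^2_x}\to0$, and the inverse estimate forces $\limsup_n\|e^{it\Delta}g_n^M\|_{L^q_tL^r_x(|x|^{-r\gam})}\to0$. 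Telescoping the Pythagorean identities then gives $(ii)$ and $(iii)$ (with $e_n^M$ the rescaled $g_n^M$), provided the cross terms $\langle e^{i\tau_{l,n}\Delta}V^l,e^{i\tau_{l',n}\Delta}V^{l'}\rangle$ and $\langle e^{i\tau_{l,n}\Delta}V^l,e_n^M\rangle$ tend to $0$.

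That last fact, and the orthogonality $(i)$ of the time parameters, follow from the same argument. If, for $j<j'$, $|\tau_{j,n}-\tau_{j',n}|$ stayed bounded along a subsequence, write $e^{-i\tau_{j,n}\Delta}g_n^{j'-1}=e^{-i\tau_{j,n}\Delta}g_n^{j}-\sum_{l=j+1}^{j'-1}e^{i(\tau_{l,n}-\tau_{j,n})\Delta}V^l$; the first term is weakly null by construction, and each summand is weakly null by the inductive orthogonality $|\tau_{l,n}-\tau_{j,n}|\to\infty$ together with the dispersive fact that $e^{is_n\Delta}V\rightharpoonup0$ whenever $|s_n|\to\infty$; hence $e^{-i\tau_{j,n}\Delta}g_n^{j'-1}\rightharpoonup0$, and composing with the strongly convergent unitaries $e^{-i(\tau_{j',n}-\tau_{j,n})\Delta}$ gives $V^{j'}=0$, a contradiction. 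The same dispersive fact makes all the cross terms above vanish in the limit, closing $(iii)$. Undoing the rescaling $f_n=\kappa_n^{3/2}g_n(\kappa_n\cdot)$ yields the statement.
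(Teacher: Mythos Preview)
Your proposal is correct and close to the paper's argument. Both rescale to the fixed annulus $K$ (your $g_n$ are the paper's $F_n$), then iteratively peel off time-translated profiles; the Pythagorean expansion and the orthogonality of the $\tau_{j,n}$ are handled in the same way (the paper in fact leaves $(i)$ implicit, while you spell it out). The only substantive difference is how the smallness of the weighted Strichartz norm of the remainder is tied to the absence of nontrivial weak limits.

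You phrase this as an \emph{inverse Strichartz estimate} and reduce the weighted norm to $L^\infty_{t,x}$ by a near-origin/far-field spatial splitting. The paper instead defines $\mu(\mathcal C^M)=\sup\{\|V\|_{L^2}: e^{-i\sigma_n\Delta}F_n^M\rightharpoonup V\}$, extracts $V^l$ with $\|V^l\|_{L^2}\ge\tfrac12\mu(\mathcal C^{l-1})$ so that $\mu(\mathcal C^M)\to0$, and then proves directly
\[
\|e^{it\Delta}F_n^M\|_{L^q_tL^r_x(|x|^{-r\gam})}\le\|e^{it\Delta}F_n^M\|_{L^{\theta q}_tL^{\theta r}_x(|x|^{-r\gam})}^{\theta}\,\|e^{it\Delta}F_n^M\|_{L^\infty_{t,x}}^{1-\theta},
\]
bounding the first factor uniformly via the extended radial range of Lemma~\ref{refined} (for $\theta$ just below $1$) and showing $\limsup_n\|e^{it\Delta}F_n^M\|_{L^\infty_{t,x}}\lesssim\mu(\mathcal C^M)$ by exactly your concentration argument (pick $(\tau_n,y_n)$ near the sup, use a Lipschitz bound from frequency localization plus radiality to keep $|y_n|$ bounded, then pair against a fixed $L^2$ function). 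These are contrapositive formulations of the same fact. The paper's H\"older step is, however, cleaner than your splitting: as you yourself flag, the far-field bound ``$R^{-\gam}\|e^{it\Delta}h_n\|_{L^q_tL^r_x}\lesssim_K A'$'' is delicate because $(q,r)$ sits \emph{above} the unweighted Strichartz line, and for $r$ near $2$ (which does occur for the relevant $\gam^*$) it lies outside even the radial range of \cite{chsl}; Bernstein does not repair this since going from $L^6_x$ down to $L^r_x$ with $r<6$ is the wrong direction. Your suggested fallback---feeding in Lemma~\ref{refined}---is precisely what the paper's H\"older interpolation encodes, so once that substitution is made the two proofs coincide.
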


\begin{proof}
Let us denote by $\mathcal{C}$ the collection of functions $\{F_n\}$ which are given by $\widehat{F_n}(\xi) = (\kappa_n)^{\frac32}\widehat{f_n}(\kappa_n \xi)$, and define
$$
\mathcal{L}(\mathcal{C})=\{\mbox{weak-lim } e^{-i\tau_{1,n}\Delta}F_n\;\; \mbox{in} \;\; L_x^2 \;:\; \tau_{1,n} \in \mathbb{R} \}
$$
and $\mu(\mathcal{C}) = \sup_{V\in \mathcal{L}(\mathcal{C})}\|V\|_{L_x^2}$. Then $\mu(\mathcal{C}) \le \limsup\|F_n\|_{L_x^2}$.

%We first assume that $\mu(\mathcal{C}) > 0$.

Let us choose a subsequence $\{F_n\}$, $\tau_{1,n}$ and $V^1$ such that $e^{-i\tau_{1,n}\Delta}F_n \rightharpoonup V^1 $ as $n \to \infty$ and $\|V^1\|_{L_x^2} \ge \frac12 \mu(\mathcal{C})$. Let $F_n^1 = F_n - e^{i\tau_{1,n}\Delta}V^1$ and $\mathcal{C}^1 = \{F_n^1\}$. Then
\begin{align*}
\limsup_{n\to\infty} \|F_n^1\|_{L_x^2}^2 &= \limsup_{n\to\infty} \langle F_n - e^{i\tau_{1,n}\Delta}V^1,F_n - e^{i\tau_{1,n}\Delta}V^1 \rangle\\
&= \limsup_{n\to\infty} \langle e^{-i\tau_{1,n}\Delta}F_n - V^1 , e^{-i\tau_{1,n}\Delta}F_n - V^1 \rangle\\
&= \limsup_{n\to\infty} \left( \langle F_n,F_n \rangle + \langle V^1 - e^{-i\tau_{1,n}\Delta}F_n, V^1 \rangle + \langle V^1 , V^1 - e^{-i\tau_{1,n}\Delta}F_n \rangle - \langle V^1,V^1 \rangle  \right)\\
&= \limsup_{n\to\infty} \left( \|F_n\|_{L_x^2}^2 - \|V^1\|_{L_x^2}^2\right)
\end{align*}
where $\langle \cdot,\cdot \rangle$ is $L_x^2$-inner product. We repeat the process replacing $F_n^1$ with $F_n^2$. By taking a diagonal sequence we may write
$$
F_n(x) = \sum_{l=1}^M  e^{i\tau_{1}^n\Delta}V^l + F_n^M,
$$
which satisfies that
$$
\limsup_{n\to\infty} \|F_n\|_{L_x^2}^2 = \sum_{l=1}^M\|V^l\|_{L_x^2}^2 + \limsup_{n\to\infty} \|F_n^M\|_{L_x^2}^2.
$$
Then $\sum_{l=1}^M\|V^l\|_{L_x^2}^2$ is convergent. Thus it yields $\limsup_{n\to\infty}\|V^l\|_{L_x^2} =0 $. Since $\mu(\mathcal{C}^M) \le 2 \|V^{M+1}\|_{L_x^2} $, we get $\limsup_{M\to\infty}\mu(\mathcal{C}^M) =0 $.

 Next we define $e_n^M$ by $\widehat{e_n^M} = \kappa_n^{-\frac32}\widehat{F_n^M}$. Then we have only to show that
 \begin{align}\label{erterm}
 \limsup_{n\to\infty}\|e^{it\Delta}e_n^M\|_{L_t^qL_x^r(|x|^{-r\gam})} \les \mu(\mathcal{C}^M)^\theta
 \end{align}
for some $\theta \in (0,1)$. By construction we may assume that $\widehat{V^l}$ have compact support $K$. Since the pair $(q,r)$ is $\gam$-{\it admissible}, we get
$$
\|e^{it\Delta}e_n^M\|_{L_t^qL_x^r(|x|^{-r\gam})} = \|e^{it\Delta}F_n^M\|_{L_t^qL_x^r(|x|^{-r\gam})} \le \|e^{it\Delta}F_n^M\|_{L_t^{\theta q} L_x^{\theta r}(|x|^{-r\gam})}^{\theta} \|e^{it\Delta}F_n^M\|_{L_{t,x}^\infty}^{1-\theta}
$$
for any $\theta \in (0,1)$. We choose $\theta$ such that $0 < 1- \theta \ll 1$ and $3\left( \frac12 - \frac1{\theta r}\right) + \frac{\gam}{\theta}< \frac2{\theta q} \le 5\left( \frac12 - \frac1{\theta r}\right) + \frac{\gam}{\theta} $. Hence by \eqref{li} we obtain
\begin{align*}
\|e^{it\Delta}F_n^M\|_{L_t^{\theta q} L_x^{\theta r}(|x|^{-r\gam})} &\les  r_1^{3(\frac12 - \frac1{\theta r}) + \frac{\gam}{\theta}}\|F_n^M\|_{L_x^2} \les  r_1^{3(\frac12 - \frac1{\theta r}) + \frac{\gam}{\theta}}.
\end{align*}
Now it suffices to prove $\limsup_{n\to\infty}\|e^{it\Delta}F_n^M\|_{L_{t,x}^\infty} \les \mu(\mathcal{C}^M)$. For this we assume that
$$
\limsup_{\substack{M\to\infty\\ n\to\infty} }\|e^{it\Delta}F_n^M\|_{L_{t,x}^\infty} > \delta
$$
for some $\delta>0$. Let us choose $(\tau_{j}^M, y_n^M)$ such that $\|e^{it\Delta}F_n^M\|_{L_{t,x}^\infty} = |e^{i\tau_{j}^M\Delta}F_n^M(y_n^M)|$. Then we can show that $|y_n^M|$ is uniformly bounded. In fact,
\begin{align*}
|e^{it\Delta}F_n^M(x_1) - e^{it\Delta}F_n^M(x_2)| &\le \left( \sup |\nabla(e^{it\Delta}F_n^M(y))|\right)|x_1 - x_2|\\
& \le \int |\xi||e^{it|\xi|^2} \widehat{F_n^M}(\xi)| \,d\xi |x_1-x_2|\\
&\les \left(\int_0^{r_2}r^{n+1}dr\right)^{\frac12} \|F_n^M\|_{L_x^2}|x_1- x_2|\\
&\les r_2^{\frac{n+2}2 |x_1- x_2|}.
\end{align*}
This yields that $|e^{i\tau_{n}^M\Delta}F_n^M(y)| > \frac{\delta}2$ if $|y-y_n^M| \le C\frac{\delta}2$ for some $C$. Since $e^{i\tau_{n}^M\Delta}F_n^M(y)$ is radial, we get
$$
|e^{i\tau_{n}^M\Delta}F_n^M(y)| > \frac{\delta}2
$$
for $|y_n^M|-C\frac{\delta}2 < |y| < |y_n^M|+C\frac{\delta}2$. Then we have $\frac{\delta}2|y_n^M|^{n-1}\frac{\delta}2C \le \|F_n^M\|_{L_x^2} \le 1$, which implies that $|y_n^M|$ is uniformly bounded. Since $|y_n^M|$ is uniformly bounded, there exists $y_0^M$ such that $ y_n^M \to y_0^M$ as $n \to \infty$. Then $|e^{i\tau_{j}^M\Delta}F_n^M(y_0^M)|\ge\frac12|e^{i\tau_{j}^M\Delta}F_n^M(y_n^M)|$. For $\psi \in C_0^\infty(\mathbb{R}^3)$, we denote $\psi^M$ by $\widehat{\psi^M} = \psi \widehat{\delta_{y_0^M}}$ where $\delta_{y_0^M}$ is Dirac-delta measure. Hence we obtain
\begin{align*}
\limsup_{n\to\infty} \|e^{it\Delta}F_n^M\|_{L_{t,x}^\infty} &\les \limsup_{n\to\infty} |e^{i\tau_{j}^M\Delta}F_n^M(y_0^M)|\\
&= \limsup_{n\to\infty} \left| \int e^{i\tau_{j}^M\Delta}F_n^M(y)\psi^M(y)  \,dy \right|\\
&\le \|\psi^M\|_{L_x^2} \mu(\mathcal{C}^M)\\
&\le \mu(\mathcal{C}^M).
\end{align*}
%If $\mu(\mathcal{C}) = 0$, we are done due to \eqref{erterm}.
This ends the proof.
\end{proof}

%%%%%%%%%%%%%%%%%%%%%%%%%%%%%%%%%%%%%%%%%%%%%%%%%%%%%%%%%%%%%%%%%%%%%%%%%%%%%%%%%%%%%%%%%%%%%%%%%%%%%%%%%%%%%%%%%%%%%%%%%%%%%%%%%%%%%%%%%%%%%%%%%%%%%%%%%%%%%%%%%%%%%%%%%%%%
\subsection{Proof of $L^2$-profile decomposition}
We now prove the Proposition \ref{profile}.

From Lemma \ref{decom-space} and \ref{decom-time}, we get
\begin{align}\label{decomposition}
v_{0,n} = \sum_{j=1}^J\sum_{l=1}^{M_j} V_n^{j,l} + \omega_n^{J,M_1,\cdots,M_J}
\end{align}
where
\begin{align*}
V_n^{j,l} = e^{it_n^{j,l}\Delta}\left[\ljn^{-\frac32}V_0^{j,l}\left(\frac{\cdot}{\ljn}\right)\right], \qquad \omega_n^{J,M_1,\cdots,M_J} = \sum_{j=1}^Je_n^{j,M_j} + q_n^J
\end{align*}
with $\ljn=\kjn^{-1}$ and $t_n^{j,l} = \kjn^{-2} s_n^{j,l}$. Then by construction in Lemmas \ref{decom-space}, \ref{decom-time} the followings are held.
\begin{align*}
&(i)\;\; \mbox{The pairs}\;\; (\ljn, t_n^{j,l}) \;\;\mbox{are pairwise orthogonal},\\
&(ii)\;\; \|v_{0,n}\|_{L_x^2}^2 = \sum_{j=1}^J\sum_{l=1}^{M_j} \|V_n^{j,l}\|_{L_x^2}^2 + \|\omega_n^{J,M_1,\cdots,M_J}\|_{L_x^2}^2 + o(1) \;\;\mbox{as}\;\; n \to \infty\\
& \qquad \quad \mbox{with}\;\;\|\omega_n^{J,M_1,\cdots,M_J}\|_{L_x^2}^2 = \sum_{j=1}^J\|e_n^{j,M_j}\|_{L_x^2}^2 + \|q_n^J\|_{L_x^2}^2.
\end{align*}

Now it remains to prove that
$$
\underset{n\to\infty}{\limsup}\left\| e^{it\Delta}w_n^{J,M_1,\cdots,M_J}\right\|_{L_t^qL_x^r(|x|^{-r\gam})} \to 0.
$$
Let $\eta > 0$ be any given number. From Lemma \ref{decom-space} we get
$$
\underset{n\to\infty}{\limsup}\left\| e^{it\Delta}q_n^J\right\|_{L_t^qL_x^r(|x|^{-r\gam})} \le \eta
$$
for $J \ge N_1$ with $N_1$ enough large. And from Lemma \ref{decom-time}, we have
$$
\underset{n\to\infty}{\limsup}\left\|e^{it\Delta} e_n^{j,M_j}\right\|_{L_t^qL_x^r(|x|^{-r\gam})} \le \eta
$$
for $M_j \ge N_2$ with $N_2$ enough large.
We can rewrite the remainder as
$$
\omega_n^{J,M_1,\cdots,M_J} = \sum_{1\le j \le J}e_n^{j,\max\{M_j,N_2\}} + R_n^{J,M_1,\cdots,M_J} + q_n^J,
$$
where
$$
R_n^{J,M_1,\cdots,M_J} = \sum_{\substack{1\le j \le J\\ M_j < N_2}} (e_n^{j,M_j} - e_n^{j,N_2}) = \sum_{\substack{1\le j \le J\\ M_j < N_2}}\sum_{l=M_j}^{N_2}V_n^{l,j}.
$$
To handle this we recall an orthogonality w.r.t. space-time norm.
\begin{lem}\label{limsup}
	Assume that $e^{it\Delta}f_n^{l,j} \in L_t^qL_x^r(|x|^{-r\gam})$ for all $n, l, j \ge 1$. Then for any $J, m \ge 1$
	\begin{align*}
	\limsup_{n\to\infty} \left\|\sum_{j=1}^J \sum_{l=1}^{m} e^{it\Delta}f_n^{l,j} \right\|_{L_t^qL_x^r(|x|^{-r\gam})}^2 \le \sum_{j=1}^J \sum_{l=1}^{m}\limsup_{n\to\infty}\|e^{it\Delta}f_n^{l,j}\|_{L_t^qL_x^r(|x|^{-r\gam})}^2.
	\end{align*}	
\end{lem}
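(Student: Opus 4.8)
The plan is to expand the square, isolate the diagonal contribution---which reproduces the right-hand side---and show that every off-diagonal cross term vanishes in the limit, exploiting the pairwise orthogonality of the scaling/time parameters $(\ljn,t_n^{j,l})$ carried by the profiles $f_n^{l,j}$. (These $f_n^{l,j}$ are, in the applications, the profiles produced by Lemmas \ref{decom-space}--\ref{decom-time}, and what we actually use is the pairwise orthogonality of their parameters.) Put $G_n=\sum_{j=1}^J\sum_{l=1}^m e^{it\Delta}f_n^{l,j}$. Since $2\le q,r$ for a $\gam$-admissible pair, one has the identity $\|h\|_{L_t^qL_x^r(|x|^{-r\gam})}^2=\big\||x|^{-2\gam}|h|^2\big\|_{L_t^{q/2}L_x^{r/2}}$, so by the triangle inequality in $L_t^{q/2}L_x^{r/2}$,
\[
\|G_n\|_{L_t^qL_x^r(|x|^{-r\gam})}^2\le\sum_{j,l}\|e^{it\Delta}f_n^{l,j}\|_{L_t^qL_x^r(|x|^{-r\gam})}^2+\sum_{(j,l)\neq(j',l')}\big\||x|^{-2\gam}(e^{it\Delta}f_n^{l,j})\overline{e^{it\Delta}f_n^{l',j'}}\big\|_{L_t^{q/2}L_x^{r/2}}.
\]
Taking $\limsup_{n\to\infty}$ and using subadditivity of $\limsup$ on these finite sums, the first sum is $\le\sum_{j,l}\limsup_n\|e^{it\Delta}f_n^{l,j}\|_{L_t^qL_x^r(|x|^{-r\gam})}^2$, which is exactly the asserted bound; so it remains only to show that each cross term tends to $0$.

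Fix $(j,l)\neq(j',l')$, so that $\ljn/\lam_{j',n}+\lam_{j',n}/\ljn+|t_n^{j,l}-t_n^{j',l'}|/\ljn^2\to\infty$ as $n\to\infty$. I would first reduce to nice profiles: by density, approximate the underlying fixed profiles in $L_{rad}^2$ by functions whose Fourier transform is supported in a fixed compact annulus away from $0$ and $\infty$, so that $(t,x)\mapsto e^{it\Delta}U(x)$ is smooth, has finite weighted Strichartz norm, satisfies uniform pointwise bounds, and decays as $\|e^{it\Delta}U\|_{L_x^r}\les\langle t\rangle^{-3(\frac12-\frac1r)}$; the cross terms involving the approximation errors are controlled, uniformly in $n$, by a quantity tending to $0$ with the approximation, via weighted H\"older and Lemma \ref{refined}. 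Recalling that $e^{it\Delta}$ applied to the $(j,l)$ profile is the $\dot H^1$-scaling-invariant space-time dilation by $\ljn$ of $e^{i(t-t_n^{j,l}/\ljn^2)\Delta}U_{0,j}$, and that the weighted norm $L_t^qL_x^r(|x|^{-r\gam})$ is invariant under this dilation, I would split into two cases according to which term in the divergence above is responsible.

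If the scales are orthogonal, say $\ljn/\lam_{j',n}\to\infty$ (the opposite direction being symmetric), normalize the $(j,l)$-scale to $1$ by a change of variables; the $(j',l')$ factor then becomes a profile dilated by $\lam_{j',n}/\ljn\to0$, i.e.\ it concentrates, and pairing it against the (now fixed, spatially localized) $(j,l)$ factor by a weighted H\"older inequality with exponents $(q_1,r_1,\gam_1),(q_2,r_2,\gam_2)$ satisfying $\tfrac1{q_1}+\tfrac1{q_2}=\tfrac2q$, $\tfrac1{r_1}+\tfrac1{r_2}=\tfrac2r$, $\gam_1+\gam_2=2\gam$, with both pairs strictly inside the range of Lemma \ref{refined}, yields a positive power of $\lam_{j',n}/\ljn$ and hence a vanishing bound. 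If instead the scales are comparable but $|t_n^{j,l}-t_n^{j',l'}|/\ljn^2\to\infty$, normalize the common scale to $1$, so the two factors are $e^{i(t-s_n)\Delta}U_{0,j}$ and $e^{i(t-s_n')\Delta}U_{0,j'}$ with $|s_n-s_n'|\to\infty$. Splitting the $t$-axis into $\{|t-s_n|\le T\}$ and its complement, on the complement the $(j,l)$ factor has weighted norm $<\varepsilon$ once $T=T(\varepsilon)$ is large (tail of a finite norm), while on $\{|t-s_n|\le T\}$ the $(j',l')$ factor is evaluated at times of distance $\ge|s_n-s_n'|-T\to\infty$ from its center and hence is small there by the dispersive decay; a weighted H\"older inequality combines the two contributions into a bound $\to\varepsilon$, which closes the case as $\varepsilon>0$ is arbitrary.

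The \textbf{main obstacle} I anticipate is carrying the singular weight $|x|^{-2\gam}$ through the bilinear cross-term estimate: the textbook vanishing-of-cross-terms arguments are phrased for unweighted mixed Lebesgue norms, so one must verify that the split exponents $(q_i,r_i,\gam_i)$ realizing the weighted H\"older inequality can be taken strictly inside the admissible range of Lemma \ref{refined}---only then does the rescaled ``small'' factor genuinely gain a positive power of $\lam_{j',n}/\ljn$ (resp.\ a decaying power of $|s_n-s_n'|$)---and, similarly, that the Fourier-localization approximation of the profiles is uniform in $n$. The remaining ingredients (change of variables, dispersive decay, tail smallness, subadditivity of $\limsup$) are routine.
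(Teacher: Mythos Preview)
Your reduction to the vanishing of the off-diagonal cross terms
\[
\limsup_{n\to\infty}\big\|\,|x|^{-2\gam}\,e^{it\Delta}f_n^{l,j}\,\overline{e^{it\Delta}f_n^{l',j'}}\,\big\|_{L_t^{q/2}L_x^{r/2}}=0,\qquad (l,j)\neq(l',j'),
\]
is exactly the paper's first step, and your case split (scale divergence versus time-shift divergence) together with the density argument is sound. The difference is in the approximation you choose and the resulting bookkeeping. The paper approximates each linear evolution $e^{it\Delta}U$ not by a Fourier-localized free wave but by a space-time function $\Psi\in C_0^\infty(\mathbb R^{1+3})$, using density in $L_t^qL_x^r(|x|^{-r\gam})$. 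Once both factors are compactly supported in $(t,x)$, the paper applies a single H\"older in $x$ with the \emph{same} weighted exponent $L_x^r(|x|^{-r\gam})$ on each factor, then changes variables; because $(q,r)$ is $\gam$-admissible the scaling exponents cancel exactly, and one is left with
\[
\mu_n^{1/q}\big\|\,h_1(t)\,h_2(\mu_n t - c_n)\,\big\|_{L_t^{q/2}},\qquad \mu_n=(\ljn/\lam_{j',n})^2,
\]
where $h_i(t)=\|\Psi_i(t,\cdot)\|_{L_x^r(|x|^{-r\gam})}$ are bounded with compact time support. The vanishing then follows from an elementary argument on supports (shrinking interval if $\mu_n\to\infty$, prefactor if $\mu_n\to 0$, separation if $|c_n|\to\infty$). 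This completely sidesteps the ``main obstacle'' you flag: there is no need to split the weight between two admissible triples $(q_i,r_i,\gam_i)$, no appeal to dispersive decay, and no need to verify that the split exponents sit strictly inside the range of Lemma~\ref{refined}. Your route works, but the paper's choice of space-time $C_0^\infty$ approximants makes the weight essentially inert and the argument considerably shorter.
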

By Lemma \ref{limsup}, we obtain
\begin{align*}
\underset{n\to\infty}{\limsup} \|e^{it\Delta}  \omega_n^{J,M_1,\cdots,M_J}\|_{L_t^qL_x^r(|x|^{-r\gam})} &\le \underset{n\to\infty}{\limsup} \sum_{j=1}^J\|e^{it\Delta} e_n^{j,\max\{M_j,N_2\}}\|_{L_t^qL_x^r(|x|^{-r\gam})}\\
&\qquad + \underset{n\to\infty}{\limsup}\|e^{it\Delta} R_n^{J,M_1,\cdots,M_J}\|_{L_t^qL_x^r(|x|^{-r\gam})}\\
&\qquad + \underset{n\to\infty}{\limsup}\|e^{it\Delta} q_n^J\|_{L_t^qL_x^r(|x|^{-r\gam})}\\
&\le (J+1)\eta + \underset{n\to\infty}{\limsup}\|e^{it\Delta} R_n^{J,M_1,\cdots,M_J}\|_{L_t^qL_x^r(|x|^{-r\gam})}\\
&\le (J+1)\eta +  \left(\sum_{\substack{1\le j \le J\\ M_j < N_2}}\sum_{l=M_j}^{N_2}\underset{n\to\infty}{\limsup} \|e^{it\Delta}V_n^{l,j}\|_{L_t^qL_x^r(|x|^{-r\gam})}^2 \right)^{\frac12}\\
&\le (J+1)\eta +  \left(\sum_{1\le j \le J}\sum_{l=M_j}^{\infty} \|V_n^{l,j}\|_{L_x^2}^2 \right)^{\frac12}\\
&\le (J+2)\eta
\end{align*}
for $J,\,M_j$ enough large. This completes the proof of Proposition \ref{profile}.

\begin{proof}[Proof of Lemma \ref{limsup}]
	It is suffice to prove that for $(l,j) \neq (l',j')$,
	$$
	\limsup_{n\to\infty} \left\| e^{it\Delta}f_n^{l,j} e^{it\Delta}f_n^{l',j'}\right\|_{L_t^{\frac{q}2}L_x^{\frac{r}2}(|x|^{-r\gam})} = 0.
	$$
	Let $(\ljn,t_n^{l,j})$ satisfy that
	\begin{align*}
	\limsup_{n\to\infty} \left( \frac{\lam_{j',n}}{\ljn} + \frac{\ljn}{\lam_{j',n}}\right) = \infty \;\;\mbox{and}\;\;  \limsup_{n\to\infty} \frac{|t_n^{l,j} - t_n^{l',j'}|}{\ljn^2} =\infty.
	\end{align*}
	Now we will prove that if $\Psi_1,\, \Psi_2 \in C_0^{\infty}$, then
	$$
	\limsup_{n\to\infty} \left\| \ljn^{-\frac32} \Psi_1 \left( \frac{\cdot- t_n^{l,j}}{\ljn^2} , \frac{\cdot}{\ljn} \right) \lam_{j',n}^{-\frac32} \Psi_2 \left( \frac{\cdot- t_n^{l',j'}}{\lam_{j',n}^2} , \frac{\cdot}{\lam_{j',n}} \right)\right\|_{L_t^{\frac{q}2}L_x^{\frac{r}2}(|x|^{-r\gam})} = 0.
	$$
	Since $(q,r)$ is $\gam$-{\it admissible}, we get
	\begin{align*}
	A_n &:=\left\| \ljn^{-\frac32} \Psi_1 \left( \frac{\cdot- t_n^{l,j}}{\ljn^2} , \frac{\cdot}{\ljn} \right) \lam_{j',n}^{-\frac32} \Psi_2 \left( \frac{\cdot- t_n^{l',j'}}{\lam_{j',n}^2} , \frac{\cdot}{\lam_{j',n}} \right)\right\|_{L_t^{\frac{q}2}L_x^{\frac{r}2}(|x|^{-r\gam})}\\
	&\le \left\| \ljn^{-\frac32} \left\| \Psi_1 \left( \frac{\cdot- t_n^{l,j}}{\ljn^2} , \frac{\cdot}{\ljn} \right)\right\|_{L_x^r(|x|^{-r\gam})}  \lam_{j',n}^{-\frac32} \left\| \Psi_2 \left( \frac{\cdot- t_n^{l',j'}}{\lam_{j',n}^2} , \frac{\cdot}{\lam_{j',n}} \right)\right\|_{L_x^r(|x|^{-r\gam})}\right\|_{L_t^{\frac{q}2}}\\
	&= \left\| \ljn^{-\frac32 +\frac3r -\gam} \left\| \Psi_1 \left( \frac{\cdot- t_n^{l,j}}{\ljn^2} , \cdot \right)\right\|_{L_x^r(|x|^{-r\gam})}  \lam_{j',n}^{-\frac32+\frac3r -\gam} \left\| \Psi_2 \left( \frac{\cdot- t_n^{l',j'}}{\lam_{j',n}^2} , \cdot \right)\right\|_{L_x^r(|x|^{-r\gam})}\right\|_{L_t^{\frac{q}2}}\\
	&= \left\| \ljn^{-\frac32 +\frac3r -\gam +\frac4q} \left\| \Psi_1 \left( \cdot , \cdot \right)\right\|_{L_x^r(|x|^{-r\gam})}  \lam_{j',n}^{-\frac32+\frac3r -\gam} \left\| \Psi_2 \left( \left(\frac{\ljn}{\lam_{j',n}}\right)^2\cdot - \frac{t_n^{l',j'}-t_n^{l,j}}{\lam_{j',n}^2} , \cdot \right)\right\|_{L_x^r(|x|^{-r\gam})}\right\|_{L_t^{\frac{q}2}}\\
	&\le \left\| \left(\frac{\ljn}{\lam_{j',n}} \right)^{\frac2q} \left\| \Psi_1 \left( \cdot , \cdot \right)\right\|_{L_x^r(|x|^{-r\gam})}   \left\| \Psi_2 \left( \left(\frac{\ljn}{\lam_{j',n}}\right)^2\cdot - \frac{t_n^{l',j'}-t_n^{l,j}}{\lam_{j',n}^2} , \cdot \right)\right\|_{L_x^r(|x|^{-r\gam})}\right\|_{L_t^{\frac{q}2}}.
	\end{align*}
	Since the time support of $\|\Psi_2(t,\cdot)\|_{L^r(|x|^{-r\gam})}$ is compact, $\limsup_{n\to\infty} A_n =0$. By density we get the desired result.
\end{proof}

%%%%%%%%%%%%%%%%%%%%%%%%%%%%%%%%%%%%%%%%%%%%%%%%%%%%%%%%%%%%%%%%%%%%%%%%%%%%%%%%%%%%%%%%%%%%%%%%%%%%%%%%%%%%%%%%%%%%%%%%%%%%%%%%%%%%%%%%%%%%%%%%%%%%%%%%%%%%%%%%%%%%%%%%%%%%%%%%%%%%%%%%%%%%%%%%%%%%%%%%%%%%%%%%%%%%%%%%%%%%%%%%%%%%%%%%%%%%%%%%
%%%%%%%%%%%%%%%%%%%%%%%%%%%%%%%%%%%%%%%%%%%%%%%%%%%%%%%%%%%%%%%%%%%%%%%%%%%%%%%%%%%%%%%%%%%%%%%%%%%%%%%%%%%%%%%%%%%%%%%%%%%%%%%%%%%%%%%%%%%%%%%%%%%%%%%%%%%%%%%%%%%%%%%%%%%%%%%%%%%%%%%%%%%%%%%%%%%%%%%%%%%%%%%%%%%%%%%%%%%%%%%%%%%%%%%%%%%%%%%%

\section{Proof of the main theorems}

We are now ready to show main theorems. We follow the standard approach developed in \cite{km}: variational estimate;  existence and compactness of minimal energy blowup solution; rigidity. However, the variational estimates do not depend on the range of $b$ and our proof of two remaining parts is very similar to that of \cite{chkl3} except for the weighted space norms. By replacing the norms $S(I), \, W_i(I)$ appearing in \cite{chkl3} with weighted space norms $\s, \, \yi$ together with Proposition \ref{per} and Theorem \ref{profile-thm} one can follow up the full proof introduced in Section 5 of \cite{chkl3} without difficulty. Hence we leave the details to the readers and here we only sketch them without proof.

\subsection{Variational estimates}\label{sec-vari}

%Let $C_*$ be the best constant satisfying $\||x|^{-\frac b{p+1}}u\|_{L_x^{p+1}} \le C_*\|u\|_{\h}$. The existence of minimizer $u_*$ such that $\||x|^{-\frac b{p+1}}u\|_{L_x^{p+1}} = C_*\|u_*\|_{\h}$ is well-known. For instance this see Theorem 4.3 of \cite{lieb}. By the standard variational argument one can show that $u_*$ can be characterized by $u_* = e^{i\theta}\lambda^\frac12 Q_b(\lambda(x))$ for some $\theta \in [-\pi, \pi]$, $\lambda > 0$, and $x_0 \in \mathbb R^3$. From the elliptic equation \eqref{ell} it follows that $\int |\nabla Q_b|^2 = \int |x|^{-b}|Q_b|^{p+1}$. Hence, $\int |\nabla Q_b|^2 = \frac{1}{C_*^{p+1}}$.
%%%%%%%%%%%%%%%%%%%%%%%%%%%%%%%%%%%%%%%%%%%%%%%%%%%%%%%%%%%%%%%%%%%%%%%%%%%%%%%%%%%%%%%	
\begin{lem}[Energy trapping]\label{trapping}
	Let $u$ be a solution of \eqref{maineq} with $\varphi$  such that
	$$
	g_s\|\varphi\|_{\h}^2 < \|Q_b\|_{\h}^2\;\;\mbox{and}\;\; \eg(\varphi)\le(1-\delta_0)\eg(Q_b)
	$$
	for some $\delta_0 >0$. Then there exits $\bar{\delta}=\bar{\delta}(\delta_0)$ such that
	\begin{align*}
	&(i)   \;g_s\|u(t)\|_{\h}^2 \le (1-\bar{\delta})\|Q_b\|_{\h}^2,\\
	&(ii)  \;\int |\nabla u(t)|^2 - g|u(t)|^{p+1} dx \ge \bar{\delta}\int |\nabla u(t)|^2 dx,\\
	&(iii) \;{\rm (Coercivity)}\;\eg(u(t)) \sim \|u(t)\|_{\h}^2 \sim \|\varphi\|_{\h}^2,
	\end{align*}
	for all $t \in I^*$, where $I^*$ is the maximal existence time interval.
\end{lem}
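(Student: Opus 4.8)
The plan is to run the standard energy-trapping argument of \cite{km}, adapted to the weighted nonlinearity; as noted at the opening of this section, the variational estimates are insensitive to the range of $b$, so this is the argument of Section~5 of \cite{chkl3}, and I only indicate its structure. The one prerequisite is the sharp weighted Sobolev (Caffarelli--Kohn--Nirenberg) inequality
\[
\int_{\rt}|x|^{-b}|v|^{p+1}\,dx \;\le\; C_b\,\|\nabla v\|_{L_x^2}^{p+1},\qquad C_b:=\|\nabla Q_b\|_{L_x^2}^{-(p-1)},
\]
whose optimal constant is obtained from the Euler--Lagrange equation \eqref{ell} and the Pohozaev identity $\|\nabla Q_b\|_{L_x^2}^2=\int|x|^{-b}Q_b^{p+1}\,dx$, with $Q_b$ and its $\dot H^1$-rescalings the only optimizers (see Remark~2.1 of \cite{ya} and the Appendix of \cite{chkl3}). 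Inserting the bound $0\le g\le g_s|x|^{-b}$ from \eqref{scaling} gives $\int g|v|^{p+1}\,dx\le g_sC_b\|\nabla v\|_{L_x^2}^{p+1}$, and therefore the scalar lower bound
\[
\eg(v)\;\ge\;\Psi\!\big(\|\nabla v\|_{L_x^2}^2\big),\qquad \Psi(y):=\frac12 y-\frac{g_sC_b}{p+1}\,y^{(p+1)/2}.
\]

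I would then record the elementary properties of $\Psi$ on $[0,\infty)$: it vanishes at $0$, increases strictly on $[0,y_*]$, decreases strictly on $[y_*,\infty)$ with a unique interior critical point $y_*$ satisfying $g_sC_b\,y_*^{(p-1)/2}=1$, and $\Psi(y_*)=\frac{p-1}{2(p+1)}y_*>0$. The only genuine bookkeeping is to show that the hypotheses place $\varphi$ strictly inside this subcritical well: using \eqref{var-con} together with \eqref{scaling} and the Pohozaev identity, one checks that $\gs\|\varphi\|_{\h}^2<\|Q_b\|_{\h}^2$ forces $\|\nabla\varphi\|_{L_x^2}^2<y_*$, while $\eg(\varphi)\le(1-\delta_0)\eg(Q_b)$ forces $\eg(\varphi)\le(1-\delta_1)\Psi(y_*)$ for some $\delta_1=\delta_1(\delta_0)>0$. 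Here $y_*$ and $\Psi(y_*)$ are the explicit multiples of $\|Q_b\|_{\h}^2$ and $\eg(Q_b)$ obtained after comparing $g$ with $|x|^{-b}$, and \eqref{var-con} is exactly what makes this comparison go the right way; I expect this constant chasing to be the main obstacle, everything else being soft.

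The trapping itself is then a connectedness argument. By Proposition~\ref{lwp}, $u\in C(I^*;\h)$, so $t\mapsto\|\nabla u(t)\|_{L_x^2}^2$ is continuous on the interval $I^*$, and $\eg$ is conserved along the flow, $\eg(u(t))=\eg(\varphi)$ for all $t\in I^*$ (the $H^1$-conservation noted in the Remark following Proposition~\ref{lwp} extends to $\dot H^1$ data by density and the continuous dependence in Proposition~\ref{lwp}). Combining these with the scalar bound yields $\Psi(\|\nabla u(t)\|_{L_x^2}^2)\le\eg(u(t))=\eg(\varphi)\le(1-\delta_1)\Psi(y_*)<\Psi(y_*)$ for every $t\in I^*$; hence $\|\nabla u(t)\|_{L_x^2}^2$ avoids the value $y_*$, and since $I^*$ is connected, $\|\nabla u(0)\|_{L_x^2}^2<y_*$, and the curve is continuous, it stays in $[0,y_*)$ for all $t$. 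On $[0,y_*)$, $\Psi$ is a strictly increasing bijection onto $[0,\Psi(y_*))$, so $\Psi(\|\nabla u(t)\|_{L_x^2}^2)\le(1-\delta_1)\Psi(y_*)$ gives $\|\nabla u(t)\|_{L_x^2}^2\le y_1$, where $y_1<y_*$ is defined by $\Psi(y_1)=(1-\delta_1)\Psi(y_*)$. Rewriting $y_1<y_*$ in $\|Q_b\|_{\h}^2$-units gives $(i)$ with $\bar\delta=\bar\delta(\delta_0)>0$.

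Finally $(ii)$ and $(iii)$ are algebraic consequences of $(i)$. On the trapped region the sharp inequality gives
\[
\int g|u(t)|^{p+1}\,dx\;\le\;g_sC_b\|\nabla u(t)\|_{L_x^2}^{p+1}\;=\;\Big(\frac{\|\nabla u(t)\|_{L_x^2}^2}{y_*}\Big)^{(p-1)/2}\|\nabla u(t)\|_{L_x^2}^2\;\le\;\Big(\frac{y_1}{y_*}\Big)^{(p-1)/2}\|\nabla u(t)\|_{L_x^2}^2,
\]
and $(y_1/y_*)^{(p-1)/2}<1$, which is $(ii)$. For $(iii)$, $\eg(u)\le\frac12\|\nabla u\|_{L_x^2}^2$ since $g\ge0$, while the identity $\eg(u)=\frac1{p+1}\big(\|\nabla u\|_{L_x^2}^2-\int g|u|^{p+1}\,dx\big)+\frac{p-1}{2(p+1)}\|\nabla u\|_{L_x^2}^2$ together with the positivity furnished by $(ii)$ gives $\eg(u)\ge\frac{p-1}{2(p+1)}\|\nabla u\|_{L_x^2}^2$; combined with $\eg(u(t))=\eg(\varphi)$ this yields $\eg(u(t))\sim\|\nabla u(t)\|_{L_x^2}^2\sim\|\varphi\|_{\h}^2$ on $I^*$, which is $(iii)$.
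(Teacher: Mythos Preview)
Your proposal is correct and matches the paper's approach: the paper itself gives no proof of this lemma, explicitly stating that the variational estimates do not depend on the range of $b$ and referring the reader to Section~5 of \cite{chkl3}, which is precisely the standard Kenig--Merle energy-trapping argument you have outlined. Your sketch is in fact more detailed than what the paper provides, and you have correctly identified the one nontrivial step---the comparison of $E_g(Q_b)$ and $\Psi(y_*)$ via \eqref{var-con}---as the place where the constants must be tracked carefully.
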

%%%%%%%%%%%%%%%%%%%%%%%%%%%%%%%%%%%%%%%%%%%%%%%%%%%%%%%%%%%%%%%%%%%%%%%%%%%%%%%%%%%%%%%%%%%
\begin{lem}\label{blowup-cor}
	Let $u$ be a solution of \eqref{maineq} with $\varphi$  such that
	$$
	\gs \|\varphi\|_{\h}^2 \ge \|Q_b\|_{\h}^2\;\;\mbox{and}\;\; E_g(\varphi) \le (1-\delta_0) E_g(Q_b)
	$$
	for some $\delta_0 > 0$. Then there exits $\bar{\delta}=\bar{\delta}(\delta_0)$ such that
	\begin{align*}
	&(i)   \;\gs\|u(t)\|_{\h}^2 \ge (1+\bar{\delta})\|Q_b\|_{\h}^2,\\
	&(ii)  \;\int |\nabla u(t)|^2 -(1-\eta) g|u(t)|^{p+1} dx \le -\frac{(2-b-(3-b)\eta)\bar{\delta}}{\gs}\|Q_b\|_{\h}^2
	\end{align*}
	for all $t \in I^*$ and $0 \le \eta \le k_g$, where $I^*$ is the maximal existence time interval and  $k_g = \frac{2-b - g_0}{3 - b - g_0}$, and $g_0 = \gs(3-b - g_i)$.
\end{lem}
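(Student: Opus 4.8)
I would run the standard Kenig--Merle variational argument (as in \cite{km,chkl3}), the only novelty being that the focusing term carries the general coefficient $g$ rather than the exact power $|x|^{-b}$. The first ingredient is the sharp weighted Sobolev inequality attached to $Q_b$: testing the elliptic equation \eqref{ell} against $Q_b$ gives the Pohozaev--Nehari identity $\int|x|^{-b}Q_b^{p+1}\,dx = \|Q_b\|_{\h}^2$ with $p+1 = 6-2b$, so the optimal constant in $\int|x|^{-b}|v|^{p+1}\,dx \le C_*\|v\|_{\h}^{p+1}$ is $C_* = \|Q_b\|_{\h}^{-(p-1)}$. Combining with $g \le \gs|x|^{-b}$ from \eqref{scaling} yields $\int g|v|^{p+1}\,dx \le \gs C_*\|v\|_{\h}^{p+1}$ and hence $E_g(v) \ge F\big(\|v\|_{\h}^2\big)$, where $F(y) = \tfrac12 y - \tfrac{\gs C_*}{p+1}\,y^{(p+1)/2}$ increases on $[0,y_c]$, decreases on $[y_c,\infty)$ with $y_c = (\gs C_*)^{-2/(p-1)}$, and has maximum $F(y_c) = \tfrac{p-1}{2(p+1)}\,y_c$.

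The variational condition \eqref{var-con} is what couples this to the threshold $E_g(Q_b)$. Using $g \ge g_i|x|^{-b}$ and the same identity, $E_g(Q_b) \le \tfrac{3-b-g_i}{2(3-b)}\|Q_b\|_{\h}^2$, and $g_0 = \gs(3-b-g_i)\le 2-b$ is precisely the inequality that keeps this bound (strictly, after the $(1-\delta_0)$-loss) below $F(y_c)$ and puts $\|Q_b\|_{\h}^2/\gs$ in, or above, the gap of the sublevel set $\{y>0 : F(y)\le(1-\delta_0)E_g(Q_b)\} = [0,y_-]\cup[y_+,\infty)$, $y_-<y_c<y_+$, the separation being quantitative in $\delta_0$. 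Then for (i): by conservation of $E_g$, $F(\|u(t)\|_{\h}^2)\le E_g(u(t)) = E_g(\varphi)\le(1-\delta_0)E_g(Q_b)$ on $I^*$, so $\|u(t)\|_{\h}^2\in[0,y_-]\cup[y_+,\infty)$ for all $t$; the hypothesis $\gs\|\varphi\|_{\h}^2\ge\|Q_b\|_{\h}^2$ places $\|\varphi\|_{\h}^2$ in the upper component $[y_+,\infty)$, and continuity of $t\mapsto\|u(t)\|_{\h}$ (Proposition \ref{lwp}) forbids crossing the gap, so $\|u(t)\|_{\h}^2\ge y_+=:(1+\bar\delta)\|Q_b\|_{\h}^2/\gs$ with $\bar\delta=\bar\delta(\delta_0)>0$, which is (i).

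For (ii) I would use the energy identity $\int g|u|^{p+1}\,dx = \tfrac{p+1}{2}\|u\|_{\h}^2 - (p+1)E_g(u)$ and $p+1 = 6-2b$ to rewrite
\[
\int |\nabla u(t)|^2 - (1-\eta)\,g|u(t)|^{p+1}\,dx = -\big(2-b-(3-b)\eta\big)\|u(t)\|_{\h}^2 + (1-\eta)(6-2b)\,E_g(u(t)).
\]
On $0\le\eta\le k_g$ one has $2-b-(3-b)\eta\ge0$, so inserting the lower bound from (i) together with $E_g(u(t)) = E_g(\varphi)\le(1-\delta_0)E_g(Q_b)\le(1-\delta_0)\tfrac{3-b-g_i}{2(3-b)}\|Q_b\|_{\h}^2$ and using $\gs(3-b-g_i)=g_0$, the desired estimate collapses to the elementary inequality $(1-\eta)(1-\delta_0)g_0\le 2-b-(3-b)\eta$; this holds throughout $[0,k_g]$ because $\eta\mapsto\tfrac{2-b-(3-b)\eta}{1-\eta}$ decreases from $2-b$ at $\eta=0$ to exactly $g_0$ at $\eta = k_g = \tfrac{2-b-g_0}{3-b-g_0}$.

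The hard part is the reconciliation in the middle paragraph: since $g$ is only pinched between $g_i|x|^{-b}$ and $\gs|x|^{-b}$, the exact-power ground state $Q_b$ is not a critical point of $E_g$, so one must verify that \eqref{var-con} is simultaneously strong enough to push $E_g(Q_b)$ below $\max F$ and to seat $\|Q_b\|_{\h}^2/\gs$ on the correct side of the gap, with the resulting constant $\bar\delta$ quantitative in $\delta_0$. Everything else --- the energy identity, the $\eta$-algebra in (ii), and the continuity/connectedness argument --- is routine and runs exactly as in the $0\le b<\tfrac43$ case of \cite{chkl3}.
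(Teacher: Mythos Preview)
Your approach is correct and is precisely the one the paper intends: in Section~4 the authors state Lemma~\ref{blowup-cor} without proof, noting that the variational estimates ``do not depend on the range of $b$'' and referring to \cite{chkl3}, where the Kenig--Merle scheme you outline is carried out. Your identity for $\int|\nabla u|^2-(1-\eta)g|u|^{p+1}$, the monotonicity of $\eta\mapsto(2-b-(3-b)\eta)/(1-\eta)$ reaching $g_0$ at $\eta=k_g$, and the continuity/gap argument for (i) are exactly the ingredients used there, and you have correctly isolated the one nontrivial verification---that \eqref{var-con} places $\|Q_b\|_{\h}^2/\gs$ on the right side of the sublevel gap of $F$---as the point requiring care.
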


\subsection{Minimal energy blowup solution}

%If you would like to check the proof of the propositions, you may go to \cite{chkl3}.

For each $0 < e < \eg(Q_b)$ let
$$
\mathcal A(e) := \left\{\varphi \in \hr : E_g(\varphi) < e, \gs\|\varphi\|_{\h}^2 < \|Q_b\|_{\h}^2 \right\}$$
and let
$$
\mathcal \beta(e) := \sup\Big\{\|v\|_{\sst} : v(0) \in \mathcal A(e), v \;\;\mbox{solution to}\;\;\eqref{maineq} \Big\}.
$$
Define $E_{g,c} = \sup\{e : \mathcal \beta(e) < +\infty\}$. In view of the blowup criterion and small data scattering (Remarks \ref{blowup-cri} and \ref{scattering}) we deduce that $0 < E_{g,c} \le E_g(Q_b)$. We assume that $E_{g,c} < \eg(Q_b)$, which will lead us to a contradiction.

At this point, we may expect that $\eg(Q_b)$ is critical value between GWP and blowup.
\begin{prop}[Existence of minimal energy blowup solution]\label{mebs}
	Let $\varphi_c \in \hr$ satisfy that $E_g(\varphi_c) = \egc(<E_g(Q_b))$ and $\gs\|\varphi_c\|_{\h}^2 < \|Q_b\|_{\h}^2$. If $u_c$ is the corresponding solution to \eqref{maineq}, then $\|u_c\|_{\sst} = +\infty$.
\end{prop}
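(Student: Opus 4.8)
The plan is to run the Kenig--Merle concentration--compactness argument of \cite{km} inside the weighted Strichartz framework of Sections 2 and 3, that is, to repeat the scheme of \cite{chkl3} with the norms $S(I)$ and $W_i(I)$ systematically replaced by $\s$ and $\yi$. Since we are assuming $\egc<\eg(Q_b)$, for every $e\in(\egc,\eg(Q_b))$ the quantity $\beta(e)$ is infinite, so one may choose $e_n\downarrow\egc$ and radial data $\vp_n\in\mathcal A(e_n)$ whose solutions $u_n$ satisfy $\|u_n\|_{\sst}\to\infty$. Because $\eg(\vp_n)<e_n<\eg(Q_b)$ and $\gs\|\vp_n\|_{\h}^2<\|Q_b\|_{\h}^2$, Lemma \ref{trapping} applies uniformly in $n$ and yields $\|\vp_n\|_{\h}\le A$ together with the coercivity $\eg(\vp_n)\sim\|\vp_n\|_{\h}^2$. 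I would then apply the weighted profile decomposition, Theorem \ref{profile-thm}, to $\{\vp_n\}$, producing profiles $U_{0,j}$, parameters $(\ljn,\tjn)$, and remainders $W_n^J$.

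The next step is to attach nonlinear profiles. The $\h$-orthogonality $(iii)$ and the energy orthogonality $(iv)$, combined with the coercivity above, show that $\gs\|U_{0,j}\|_{\h}^2\le\|Q_b\|_{\h}^2$, that $\gs\|W_n^J\|_{\h}^2<\|Q_b\|_{\h}^2$ for $n$ large, that each energy $\eg(\ljn^{-1/2}U_j^l(-\tjn/\ljn^2,\cdot/\ljn))$ is nonnegative and bounded by $\egc+o(1)$, and that $\sum_j\eg(\cdot)+\eg(W_n^J)=\egc+o(1)$. Passing to a subsequence so that $-\tjn/\ljn^2$ converges in $[-\infty,+\infty]$ for each $j$, and using that the admissible class \eqref{scaling}, \eqref{var-con}, \eqref{rig-con} is stable under the rescaled coefficients $g_{\ljn}(x)=\ljn^b g(\ljn x)$ (which form a $C^1_{loc}(\rt\setminus\{0\})$-precompact family), I attach to each $U_{0,j}$ a nonlinear profile $U_j$: if $-\tjn/\ljn^2\to s_j^*\in\mathbb R$, then $U_j$ is the maximal solution of \eqref{maineq} for a limiting coefficient with $U_j(s_j^*)=U_{0,j}$; if $-\tjn/\ljn^2\to\pm\infty$, then $U_j$ is the solution scattering to $e^{it\Delta}U_{0,j}$ at $\pm\infty$, which is global with finite $\s$-norm by small-data well-posedness (Remark \ref{blowup-cri}).

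The decisive step is a dichotomy. Suppose the decomposition produced either two nontrivial profiles, or a single profile with $\eg(U_{0,1})<\egc$, or a single profile with divergent time parameter. In all of these cases every $U_j$ has finite $\s$-norm: the divergent-time profiles by small-data theory, and the remaining ones because their energy is strictly below $\egc$ while their $\h$-norm is sub-threshold, hence $\|U_j\|_{\s}\le\beta(\eg(U_j))<\infty$. Using the superposition $\sum_{j\le J}U_{j,n}$ of the rescaled, time-translated nonlinear profiles as an approximate solution for $u_n$, the pairwise orthogonality of $(\ljn,\tjn)$---exploited exactly as in the proof of Lemma \ref{limsup}, but now for the weighted norms $\s,\yi$ and for the error norm $\|\nabla(\cdot)\|_{L_I^{\widetilde q'}L_x^{\widetilde r'}(|x|^{\frac16\widetilde r'})}$---makes the cross terms and the Duhamel error of the superposition vanish as $n\to\infty$, while $W_n^J$ disappears by $(ii)$. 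The long-time perturbation Proposition \ref{per} then bounds $\|u_n\|_{\sst}$ uniformly in $n$, contradicting $\|u_n\|_{\sst}\to\infty$. Hence exactly one profile survives, $W_n^J\to0$ in every weighted Strichartz norm, its time parameter stays bounded (normalize $-\tjn/\ljn^2\to0$), $\eg(U_{0,1})=\egc$, and $\gs\|U_{0,1}\|_{\h}^2<\|Q_b\|_{\h}^2$, the last strict inequality coming from $\eg(U_{0,1})=\egc<\eg(Q_b)$ via the variational characterization of $Q_b$ (Appendix of \cite{chkl3}). Setting $\vp_c=U_{0,1}$---after rescaling so that $\ljn\equiv1$---the solution $u_c$ must then obey $\|u_c\|_{\sst}=+\infty$: otherwise one further application of Proposition \ref{per}, with $\widetilde u$ a rescaled copy of $u_c$ approximating $u_n$, would again bound $\|u_n\|_{\sst}$.

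I expect the main obstacle to be the decoupling step in the weighted setting: proving that superpositions of profiles split orthogonally in $\s$ and $\yi$ and that the Duhamel error of such a superposition is small in $\|\nabla(\cdot)\|_{L_I^{\widetilde q'}L_x^{\widetilde r'}(|x|^{\frac16\widetilde r'})}$. The weight $|x|^{-\gam}$ is scaling-invariant, so it interacts cleanly with $x\mapsto x/\ljn$, but it destroys translation invariance; this is exactly why the decomposition carries only scaling and time parameters (no spatial translations), why one restricts to radial data, and why all decoupling must be extracted solely from $\ljn/\lam_{j',n}+\lam_{j',n}/\ljn+|\tjn-t_{j',n}|/\ljn^2\to\infty$. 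A secondary difficulty is the bookkeeping of the rescaled coefficients $g_{\ljn}$: one must take $C^1_{loc}$-subsequential limits, verify that the limiting coefficient still satisfies \eqref{scaling}, \eqref{var-con}, \eqref{rig-con}, and use that $\|Q_b\|_{\h}$ and $\eg(Q_b)$ are scaling-invariant, so that Lemma \ref{trapping} applies uniformly to every nonlinear profile.
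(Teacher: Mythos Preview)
Your proposal is correct and is precisely the approach the paper intends: the paper gives no detailed proof of Proposition \ref{mebs}, but explicitly states that one should repeat the argument of Section 5 of \cite{chkl3} with the weighted norms $\s,\yi$ in place of $S(I),W_i(I)$, using the long-time perturbation Proposition \ref{per} and the weighted profile decomposition Theorem \ref{profile-thm} as inputs. Your sketch reproduces exactly this Kenig--Merle scheme and correctly isolates the weighted decoupling (Lemma \ref{limsup}) and the bookkeeping of the rescaled coefficients $g_{\ljn}$ as the places requiring care.
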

The solution $u_c$ is called the minimal energy blowup solution (MEBS).

\begin{prop}[Compactness of the MEBS flows]\label{compactness}
	For any $u_c$ as in Proposition \ref{mebs}, with $\|u_c\|_{\sst} = +\infty$, there exist $\lambda(t) \in \mathbb{R}^+$, $t \in I_+^* (I_+^* := I^* \cap \left[0,\infty \right))$ such that
	$$
	\mathcal M = \left\{ v(x,t) := \lambda(t)^{-\frac12} u_c\left(t, \frac{x}{\lambda(t)}\right) : t \in I_+^*  \right\}
	$$
	has compact closure in $\hr$.
\end{prop}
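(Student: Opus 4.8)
The plan is to run the Kenig--Merle concentration--compactness scheme of \cite{km, chkl3}, with every Strichartz-type norm replaced by the weighted norms $\s$, $\yi$. It suffices to establish the following rescaled sequential precompactness: for every sequence $\{t_n\} \subset I_+^*$ there are scales $\lam_n \in \mathbb R^+$ such that $\{\lam_n^{\frac12} u_c(t_n, \lam_n\,\cdot)\}$ is precompact in $\hr$. Granting this, one produces the single parameter $\lam(t)$ rendering $\mathcal M$ precompact by the usual normalization (say, choosing $\lam(t)$ so that half of $\|\nabla u_c(t)\|_{L_x^2}^2$ sits in $\{|x| < 1/\lam(t)\}$) together with the one--bubble alternative proved below. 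So I would fix $\{t_n\}$, set $\vp_n := u_c(t_n)$, and record that by energy trapping (Lemma \ref{trapping}) the sequence $\{\vp_n\}$ is bounded in $\hr$, obeys $\gs\|\vp_n\|_{\h}^2 < \|Q_b\|_{\h}^2$, $\eg(\vp_n) = \egc$, and is coercive, $\eg(\vp_n) \sim \|\vp_n\|_{\h}^2$.

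Next I would apply the weighted profile decomposition, Theorem \ref{profile-thm}, to $\{\vp_n\}$, getting profiles $\{U_{0,j}\}$, parameters $(\ljn,\tjn)$ and remainders $W_n^J$. The core step is the \emph{one--bubble alternative}: up to a subsequence exactly one profile is nontrivial, $\|W_n^J\|_{\h} \to 0$, and one may normalize $\tjn/\ljn^2 \to 0$; I would prove it by contradiction with the minimality of $\egc$. First, since $\|u_c\|_{\sst} = +\infty$, the free evolution $e^{it\Delta}\vp_n$ cannot have small $S_w$-norm on any half--line inside $I_+^*$ (otherwise Proposition \ref{lwp} and the blowup criterion give $\|u_c\|_{\sst} < \infty$), so by Theorem \ref{profile-thm}$(v)$ the first profile satisfies $\|U_{0,1}\|_{\hr} \ge \alpha > 0$, hence $\eg(U_{0,1}) > 0$ by coercivity. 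Since $\limsup_n\|\vp_n\|_{\h}^2 < \gs^{-1}\|Q_b\|_{\h}^2$, the $\h$-decomposition $(iii)$ transmits the sub--threshold constraint to each profile and to $W_n^J$ for $n$ large, so the variational estimates give $\eg \ge 0$ on every piece, and $(iv)$ exhibits $\egc = \eg(\vp_n)$ as a sum of the energies of the $\ljn^{-\frac12}U_j^l(-\tjn/\ljn^2,\cdot/\ljn)$ and of $W_n^J$, up to $o(1)$, all nonnegative. If there were a second nontrivial profile or $\|W_n^J\|_{\h}$ did not tend to $0$, then each piece would have energy bounded away from $\egc$ uniformly in $n$ (using $\eg(U_{0,1})>0$), so by definition of $\egc$ the associated nonlinear profiles $V_j$ --- the solution of \eqref{maineq} with data $\ljn^{-\frac12}U_j^l(-\tjn/\ljn^2,\cdot/\ljn)$, or the solution scattering to $U_j^l$ at $\pm\infty$ when $\tjn/\ljn^2 \to \pm\infty$ --- would be global with finite $\sst$-norm. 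I would then form the approximate solution $\widetilde u_n = \sum_j (V_j)_n + e^{it\Delta}W_n^J$ and, using the orthogonality of the parameters $(\ljn,\tjn)$ (Lemma \ref{limsup} for the linear terms, and its nonlinear analogue for the profiles and cross terms), check that $\widetilde u_n$ solves \eqref{maineq} up to an error small in $\|\nabla(\cdot)\|_{L_I^{\widetilde q'}L_x^{\widetilde r'}(|x|^{\frac16\widetilde r'})}$; Proposition \ref{per} then yields $\|u_c(\cdot+t_n)\|_{\sst} < \infty$, hence $\|u_c\|_{\sst} < \infty$, a contradiction. Thus there is a single profile with $\|W_n^J\|_{\h} \to 0$; and since $e^{it\Delta}U_{0,1}$ has finite $S_w$-norm on $\mathbb R$, so that $\|e^{it\Delta}\vp_n\|_{S_w}$ becomes small on a half--line whenever $\tjn/\ljn^2 \to \pm\infty$, the same argument forbids $|\tjn|/\ljn^2 \to \infty$; absorbing $e^{-is_\infty\Delta}$ into $U_{0,1}$ I may take $\tjn/\ljn^2 \to 0$.

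Granting the alternative, $\vp_n = \lon^{-\frac12}[e^{-i\tjn\Delta/\lon^2}U_{0,1}](\cdot/\lon) + W_n^1$ with $\tjn/\lon^2 \to 0$ and $\|W_n^1\|_{\h} \to 0$, so $\lon^{\frac12}\vp_n(\lon\,\cdot) \to U_{0,1}$ in $\h$; since each $\lon^{\frac12}\vp_n(\lon\,\cdot)$ is radial, $U_{0,1} \in \hr$. Taking $\lam_n = \lon$ this is exactly the desired rescaled sequential precompactness, and as $\{t_n\}$ was arbitrary, $\mathcal M$ has compact closure in $\hr$.

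I expect the main obstacle to be the one--bubble step in the weighted setting, for three intertwined reasons. First, the nonlinear profiles must live in $\s$, $\yi$ --- which is precisely what Proposition \ref{lwp} guarantees --- and must be global with finite $\sst$-norm exactly when their energy is below $\egc$. Second, one needs the decoupling of distinct bubbles carried by $(\ljn,\tjn)$ not only for the free evolutions (Lemma \ref{limsup}) but for the full nonlinear profiles and their pairwise products, measured in $\s$ and in $L_I^{\widetilde q'}L_x^{\widetilde r'}(|x|^{\frac16\widetilde r'})$, where the singular weight $|x|^{-\gm}$ could a priori interact badly with the dilations $x \mapsto x/\ljn$. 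Third, the error produced by $\widetilde u_n$ must be controlled in that weighted norm so that Proposition \ref{per} applies. The radial restriction is what keeps these manageable: there is no translation parameter, every bubble is centered at the origin, and the weight $|x|^{-\gm}$ is invariant under the admissible scalings, so the decoupling collapses to the scaling--time orthogonality already used in Lemma \ref{limsup}. The remaining ingredients --- energy trapping, nonnegativity of $\eg$ on sub--threshold data, and the normalization producing $\lam(t)$ --- follow \cite{km, chkl3} verbatim.
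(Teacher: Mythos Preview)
Your sketch is correct and follows precisely the approach the paper prescribes: the paper does not supply a detailed proof of Proposition~\ref{compactness} but instead directs the reader to run the Kenig--Merle scheme of \cite{km, chkl3}, replacing the Strichartz norms $S(I)$, $W_i(I)$ of \cite{chkl3} by the weighted norms $\s$, $\yi$ and invoking Proposition~\ref{per} and Theorem~\ref{profile-thm}. You have carried out exactly this substitution, with the one-bubble alternative, nonlinear profiles, approximate solution $\widetilde u_n$, and long-time perturbation in the expected places, and your observation that the radial restriction (no translation parameter) together with the scaling invariance of the weight $|x|^{-\gm}$ is what makes the decoupling go through is the correct explanation for why the argument of \cite{chkl3} transfers without difficulty.
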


\subsection{Rigidity}

\begin{prop}\label{rigidity}
	Suppose that $g$ is nonnegative, bounded radial function satisfying the conditions \eqref{scaling}, \eqref{var-con}, and \eqref{rig-con}. Let $\vp \in \hr$ satisfy that $\eg(\vp) < \eg(Q_b)$ and $\gs\|\vp\|_{\h}^2 < \|Q_b\|_{\h}^2$.  Let $u$ be the corresponding solution to \eqref{maineq} with $\vp$ and let $I^* = (-T_-,T_+)$ be the maximal existence time interval. Assume there exists $\lam(t) > 0$ such that
	$$
	\mathcal M:= \left\{ v(t, x) = (\lam(t))^{-\frac12}u\left(t,\frac{x}{\lam(t)}\right) : t \in [0,T_+)\right\}
	$$
	has compact closure in $\hr$. %Assume one of
	%$$(i)~ T_+ < \infty,~ \lam(t) \ge \frac{C_0^{\frac12}}{(T_+-t)^{\frac12}} \quad \text{or}\quad (ii)~ T_+=\infty,~ \lam(t)\ge A_0 > 0.$$
	Then $T_+ = +\infty$ and $\vp = 0$.
\end{prop}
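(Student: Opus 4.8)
The plan is to carry out the Kenig--Merle rigidity argument \cite{km,chkl3} with the inhomogeneity $g$ in place, splitting according to whether $T_+$ is finite or infinite. In either case the hypotheses $\eg(\vp)<\eg(Q_b)$ and $\gs\|\vp\|_{\h}^2<\|Q_b\|_{\h}^2$ place $u$ in the regime of the energy trapping Lemma \ref{trapping}, so that on all of $I^*$ one has the coercivity $\eg(u(t))\sim\|u(t)\|_{\h}^2\sim\|\vp\|_{\h}^2$ and the variational bound $\int_{\rt}\big(|\nabla u(t)|^2-g|u(t)|^{p+1}\big)\,dx\ge\bar\delta\int_{\rt}|\nabla u(t)|^2\,dx$; it is here, through Lemma \ref{trapping}, that the variational condition \eqref{var-con} is used. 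Following \cite{chkl3}, the weighted norms $\s,\yi$ play no role in this step — they enter only through the inputs (Propositions \ref{lwp}, \ref{per} and Theorem \ref{profile-thm}), which are already in place.

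Suppose first, for contradiction, that $T_+<\infty$. As in \cite{km,chkl3}, precompactness of $\mathcal M$ forces $\lam(t)\to\infty$ as $t\uparrow T_+$: a subsequence along which $\lam$ stays bounded would, by compactness, yield a nontrivial limit in $\hr$ (nontrivial because coercivity keeps $\|\nabla u(t)\|_{L_x^2}$ bounded below) from which Propositions \ref{lwp} and \ref{per} continue the solution past $T_+$, against maximality. Since $\lam(t)\to\infty$, the rescaled profiles concentrate at scale $\lam(t)^{-1}\to0$ with uniformly small tails, hence $\|u(t)\|_{L_x^2(|x|<R)}\to0$ as $t\uparrow T_+$ for each fixed $R$; inserting this into the truncated virial identity below, over the bounded interval $[0,T_+)$, produces the standard contradiction. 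Thus $T_+=\infty$.

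Now let $T_+=\infty$. Fix a radial $\phi\in C^\infty$ with $\phi(x)=|x|^2$ for $|x|\le1$, $\phi$ constant for $|x|\ge3$ and $\nabla^2\phi\ge0$, and set $\phi_R(x)=R^2\phi(x/R)$ and $z_R(t)=\int_{\rt}\phi_R|u(t)|^2\,dx$. Then $z_R(t)$ is finite and $|z_R'(t)|=\big|2\,\mathrm{Im}\!\int\nabla\phi_R\cdot\bar u\,\nabla u\big|\les R^2\|\vp\|_{\h}^2$ uniformly in $t$, since $\|u(t)\|_{L_x^2(|x|\le3R)}\les R\|\vp\|_{\h}$. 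Differentiating once more and separating $\{|x|<R\}$, where $\phi_R(x)=|x|^2$, from the shell $\{R<|x|<3R\}$ gives
$$
z_R''(t)=8\!\int_{|x|<R}\!|\nabla u|^2-\tfrac{12(2-b)}{3-b}\!\int_{|x|<R}\! g|u|^{p+1}+\tfrac{4}{3-b}\!\int_{|x|<R}\!(x\cdot\nabla g)|u|^{p+1}+\mathrm{Err}_R(t),
$$
where the error $\mathrm{Err}_R(t)$ consists of integrals over the shell and satisfies, using \eqref{scaling} (so that $|\nabla\phi_R\cdot\nabla g|\les R^{-b}$ there), $|\mathrm{Err}_R(t)|\les\int_{|x|>R}\big(|\nabla u|^2+g|u|^{p+1}\big)+\|u(t)\|_{L_x^6(|x|>R)}^2+R^{-b}\!\int_{|x|>R}|u|^{p+1}$. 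The only genuinely $g$-dependent point is now the interior term: by the rigidity condition \eqref{rig-con}, $x\cdot\nabla g\ge-bg$, and because $\tfrac{12(2-b)+4b}{3-b}=8$, the first three terms are $\ge8\int_{|x|<R}\big(|\nabla u|^2-g|u|^{p+1}\big)$; enlarging the domain to $\rt$ (the difference is absorbed into $\mathrm{Err}_R$) and invoking the variational bound of Lemma \ref{trapping} yields $z_R''(t)\ge8\bar\delta\int_{\rt}|\nabla u(t)|^2-C|\mathrm{Err}_R(t)|$. Precompactness of $\mathcal M$ — with $\lam(t)$ kept bounded below, which is part of the standard compactness reduction — makes $\sup_{t\ge0}|\mathrm{Err}_R(t)|\to0$ as $R\to\infty$, while coercivity gives $8\bar\delta\|\nabla u(t)\|_{L_x^2}^2\gtrsim\|\vp\|_{\h}^2$. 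Hence, if $\vp\neq0$, taking $R$ large we get $z_R''(t)\ge c>0$ for all $t\ge0$, so $z_R'(T)\ge z_R'(0)+cT\to\infty$, contradicting the uniform bound on $z_R'$. Therefore $\vp=0$ (so $u\equiv0$ and $T_+=\infty$).

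The step I expect to be the main obstacle is the uniform-in-time smallness of $\mathrm{Err}_R(t)$, which rests on ruling out $\inf_{t\ge0}\lam(t)=0$; this, together with the disposal of the finite-time case, comprises the familiar technical part of the Kenig--Merle scheme and is carried out for this equation in \cite{chkl3}. Everything specific to the inhomogeneous problem sits in the interior virial bound, where one sees that \eqref{rig-con} is exactly what makes the coefficient $\tfrac{12(2-b)+4b}{3-b}$ collapse to $8$, allowing the energy trapping of Lemma \ref{trapping} — valid because of \eqref{var-con} — to be applied.
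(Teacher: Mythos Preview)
Your proposal is correct and follows exactly the Kenig--Merle rigidity scheme that the paper itself invokes (the paper does not give a proof of Proposition \ref{rigidity} but defers entirely to \cite{chkl3}, replacing $S(I),\,W_i(I)$ by the weighted norms $\s,\,\yi$). In particular, your identification of the role of \eqref{rig-con}---that $x\cdot\nabla g\ge -bg$ makes the coefficient $\tfrac{12(2-b)+4b}{3-b}$ collapse to $8$ so that Lemma \ref{trapping}(ii) applies---is precisely the inhomogeneous ingredient, and the remaining steps (ruling out $T_+<\infty$, ruling out $\inf\lambda(t)=0$, and the uniform smallness of $\mathrm{Err}_R$) are the standard compactness reductions from \cite{km,chkl3} that you correctly flag as the technical backbone.
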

%%%%%%%%%%%%%%%%%%%%%%%%%%%%%%%%%%%%%%%%%%%%%%%%%%%%%%%%%%%%%%%%%%%%%%%%%%%%%%%%%%%%%%%%%%%%%%%%%%%%%%%%%%%%%%%%%%%%%%%%%%%%%%%%%%%%%%%%%%%%%%%%%%%%%%%%%%%%%%%%%%%%%%%%%%%%%%%%%%%%%%%%%%%%%%%%%%%%%%%%%%%%%%%%%%%%%%%%%%%%%%%%%%%%%%%%%%%%%%%%%%%%%%%%%%%%%%%%%%%%

\subsection{Proof of Theorem \ref{mainresult}}
By the definition of $E_{g,c}$ we deduce that
\begin{enumerate}
	\item If $0 \le e <E_{g,c}$, $\gs\|\varphi\|_{\h}^2 < \|Q_b\|_{\h}^2$, and $\eg(\varphi) < e$, then $\|u\|_{\sst} < +\infty$.
	
	\item If $E_{g,c} \le e < \eg(Q_b)$, $\gs\|\varphi\|_{\h}^2 < \|Q_b\|_{\h}^2$, and  $E_{g,c} \le \eg(\varphi)< e <\eg(Q_b)$, then $\|u\|_{\sst} = +\infty$.
\end{enumerate}
On the other hand, we can remove MEBS $u_c$ by Proposition \ref{rigidity} under the condition $\egc < \eg(Q_b)$. This implies that $\egc = \eg(Q_b)$. Therefore by (1) above we concluded that \eqref{maineq} is globally well-posed under the assumption of Theorem \ref{mainresult}.

\subsection{Proof of Theorem \ref{blowup-thm}}
We first show the part $(1)$. Let $\psi(x)\in C_0^\infty(\rt)$  and $\psi_r(x)$ be as follows:
\begin{align*}
\psi(x)&:=\left\{ \begin{array}{cc}
|x|^2 & (|x|\leq 1) \\
0 & (|x|\geq 10)
\end{array}  \right.,\qquad
\psi_r(x):=r^2\psi(\frac{x}{r}).
\end{align*}
Set $z_r(t) = \int \psi_r|u(t)|^2 dx$. Then from the density by $H^2$ data and continuous dependency of solutions it follows that
\begin{align}\label{dilation}
\frac{d}{dt}z_r = 2 \textrm{Im}\int \nabla \psi_r \cdot \nabla u \bar{u}~dx
\end{align}
and
\begin{align}\begin{aligned}\label{lvirial}
\frac{d^2}{dt^2}z_r &= 2 \textrm{Im}\int \left[-\Delta \psi_r u_t \bar{u}-\left(\nabla \psi_r \cdot \nabla\bar{u}\right)u_t +\left(\nabla \psi_r \cdot \nabla u \right)\bar{u_t}\right] dx\\
&= 4\textrm{Re}\int(\nabla^2 \psi_r \cdot \nabla\bar{u})\nabla u dx  -\frac{4-2b}{3-b}  \int(\Delta \psi_r)g|u|^{6-2b}dx\\
&\qquad + \frac{2}{3-b}\int\left(\nabla \psi_r \cdot \nabla g\right)|u|^{6-2b} dx - \int (\Delta^2\psi_r)|u|^2 dx.
\end{aligned}\end{align}
Note that \eqref{lvirial} has been obtained without radial symmetry. By integrating and taking limit $r\to \infty$ on both sides of \eqref{dilation} and \eqref{lvirial}, Fatou's lemma yields
\begin{align*}
\int |x|^2|u(t)|^2\,dx &\le 8\int_0^t\int_0^s\int\left(|\nabla u(t')|^2 - (1-k_g)g|u(t')|^{p+1}\right) dx dt'ds\\
& \qquad \quad + 2 t{\rm Im}\int (\nabla \varphi \cdot x)\varphi\,dx + \int |x|^2|\varphi|^2\,dx.
\end{align*}
Then from Corollary \ref{blowup-cor} it follows that
$$
\int |x|^2|u(t)|^2\,dx \le - C_g\bar{\delta} t^2 + 2 t{\rm Im}\int (\nabla \varphi \cdot x)\varphi\,dx + \int |x|^2|\varphi|^2\,dx
$$
for some constant $C_g$. The last inequality gives us that the maximal interval is bounded.
%%%%%%%%%%%%%%%%%%%%%%%%%%%%%%%%%%%%%%%%%%%%%%%%%%%%%%%%%%%%%%%%%%%%%%%%%%%%%%%%%%%%%%%%%%%%%%%%%%%%%%%%%%%%%%%%%%%%%%%%%%%%%%%%%%%%%%%%%%%%%%%%%%%%%%%%%%%%%%%%%%%%%
%%%%%%%%%%%%%%%%%%%%%%%%%%%%%%%%%%%%%%%%%%%%%%%%%%%%%%%%%%%%%%%%%%%%%%%%%%%%%%%%%%%%%%%%%%%%%%%%%%%%%%%%%%%%%%%%%%%%%%%%%%%%%%%%%%%%%%%%%%%%%%%%%%%%%%%%%%%%%%%%%%%%%

For the part $(2)$, we need another $\psi_r$. Let us introduce the function $\widetilde{\psi} \in C^4([0, \infty))$ such that $\widetilde{\psi}(s) = s$ for $0 \le s \le 1$, smooth for $1 < s < 10$, and $0$ for $s \ge 10$ and further that
$0 \le \widetilde{\psi} \le 1$ and $\widetilde{\psi}'(s) \le 1$ for all $s \ge 0$. For the construction of such function see Appendix B of \cite{bole}.

Now let $\widetilde{\psi}_r(s) = r \widetilde{\psi}(\frac sr)$ and $b_r(|x|) = \int_0^{|x|}\widetilde{\psi}_r(s)\,ds$. Then, by \eqref{lvirial} and radial symmetry of $u$, we get
\begin{align}\begin{aligned}\label{lvirial2}
\frac{d^2}{dt^2}z_r &= 4\int \widetilde{\psi}_r'(|x|)|\nabla u|^2\,dx - \frac{2p-2}{p+1}\int \nabla \cdot \left(\frac{x}{|x|}\widetilde{\psi}_r(|x|)\right)g|u|^{6-2b}\,dx\\
&\qquad  + \frac2{3-b}\int \frac{\widetilde{\psi}_r(|x|)}{|x|}(x\cdot\nabla g) |u|^{6-2b}\,dx - \int \Delta \nabla\cdot\left(\frac{x}{|x|}\widetilde{\psi}_r(|x|)\right)|u|^2\,dx.
\end{aligned}\end{align}
%%%%%%%%%%%%%%%%%%%%%%%%%%%%%%%%%%%%%%%%%%%%%%%%%%%%%%%%%%%%%%%%%%%%%%%%%%%%%%%%%%%%%%%%%%%%%%%%%%%%%%%%%%%%%%%%%%%%%%%%%%%%%%%%%%%%%%%%%%%%%%%%%%%%%%%%%%%%%%%%%%%%%
%%%%%%%%%%%%%%%%%%%%%%%%%%%%%%%%%%%%%%%%%%%%%%%%%%%%%%%%%%%%%%%%%%%%%%%%%%%%%%%%%%%%%%%%%%%%%%%%%%%%%%%%%%%%%%%%%%%%%%%%%%%%%%%%%%%%%%%%%%%%%%%%%%%%%%%%%%%%%%%%%%%%%
Since $\widetilde{\psi}_r'(s) \le 1$ and $x\cdot \nabla g \le (p+1)(k_g - \rho)g$, we then have
\begin{align*}
\frac{d^2}{dt^2} z_r(t) &\le 4{\int}\left(|\nabla u(t)|^2 - (1-k_g+\rho)g|u(t)|^{6-2b}\right) dx\\
&\qquad\qquad + 4 \int\left[1 - \frac{{2-b}}{6-2b}\nabla \cdot \left(\frac{x}{|x|}\widetilde{\psi}_r(|x|)\right)\right]g|u|^{6-2b} - \int \Delta \nabla\cdot\left(\frac{x}{|x|}\widetilde{\psi}_r(|x|)\right) |u|^2dx\\
&\le 4{\int}\left(|\nabla u(t)|^2 - (1-k_g+\rho)g|u(t)|^{6-2b}\right) dx + C\left[\underset{|x| \ge r}{\int} g|u|^{6-2b} + \frac{|u|^2}{|x|^2}dx \right]\\
&\le 4{\int}\left(|\nabla u(t)|^2 - (1-k_g+\rho)g|u(t)|^{6-2b}\right) dx + Cg_s\||x|^{-\frac{b}2}u\|_{L_x^{\frac{2}{b-1}}(|x|\ge r)}^2\|u\|_{L_x^2}^{4-2b}\\
&\qquad\qquad\qquad\qquad\qquad\qquad\qquad\qquad\qquad\quad\;\; + Cr^{-2}\|u\|_{L_x^2}^2.
\end{align*}
To control the second term we use the Lemma \ref{emb-w}:
$$
\||x|^{-\frac{7-3p}{4}}u\|_{L_x^{\frac4{3-p}}} \le C_0 \|\nabla u\|_{L_x^2}.
$$
The mass conservation (Remark \ref{m-e-cons}) gives us that
\begin{align*}
\frac{d^2}{dt^2} z_r(t) \le 4(1+\varepsilon(r))\int\left(|\nabla u(t)|^2 - \frac{(1-k_g+\rho)}{1+\varepsilon(r)}g|u(t)|^{p+1}\right) dx + Cr^{-2}\|\varphi\|_{L_x^2}^2,
\end{align*}
where $\varepsilon(r) =\frac14 CC_0^2g_s\|\varphi\|_{L_x^2}^{p-1}r^{-\frac{p-1}2}$. Hence if we choose $r$ large enough, then since $\rho > 0$, by Lemma \ref{blowup-cor} we deduce that
\begin{align*}
\frac{d^2}{dt^2} z_r(t) \;\le\; -\frac{C_g\bar{\delta}}2.
\end{align*}
By the same argument as of part $(1)$ we obtain the desired result.

\section*{Acknowledgements}
This work was supported by National University Promotion Development Project in 2019.
%%%%%%%%%%%%%%%%%%%%%%%%%%%%%%%%%%%%%%%%%%%%%%%%%%%%%%%%%%%%%%%%%%%%%%%%%%%%%%%%%%%%%%%%%%%%%%%%%%%%%%%%%%%%%%%%%%%%%%%%%%%%%%%%%%%%%%%%%%%%%
%%%%%%%%%%%%%%%%%%%%%%%%%%%%%%%%%%%%%%%%%%%%%%%%%%%%%%%%%%%%%%%%%%%%%%%%%%%%%%%%%%%%%%%%%%%%%%%%%%%%%%%%%%%%%%%%%%%%%%%%%%%%%%%%%%%%%%%%%%%%%

%%%%%%%%%%%%%%%%%%%%%%%%%%%%%%%%%%%%%%%%%%%%%%%%%%%%%%%%%%%%%%%%%%%%%%%%%%%%%%%%%%%%%%%%%%%%%%%%%%%%%%%%%%%%%%%%%%%%%%%%%%%%%%%%%%%%%%%%%%%%%%%%%%%%%%%%%%%%%%%%%%%%%%%%%%%%%%%%%%%%%%%%%%%%%%%%%%%%%%%%%%%%%%%%%%%%%%%%%%%%%%%%%%%%%%%%%%%%%%%%%%%%%%%%%%%%%%%%%%%%%%%%%%%%%%%%%%%%%%%%%%%%%%%%%%%%%%%%%%%%%%%%%%%%

\end{document}